\newtheorem{theorem}{Theorem}[section]
\newtheorem{lemma}[theorem]{Lemma}
\newtheorem{prop}[theorem]{Proposition}
\newtheorem{conjecture}[theorem]{Conjecture}
\theoremstyle{definition}
\newtheorem{remark}[theorem]{Remark}
\begin{document}

\title[Multiple zeta-functions]{On the behavior of multiple zeta-functions
with identical arguments on the real line}

\author{Kohji Matsumoto}
\address{K. Matsumoto: Graduate School of Mathematics, Nagoya University, Chikusa-ku, Nagoya 464-8602, Japan}
\email{kohjimat@math.nagoya-u.ac.jp}

\author{Toshiki Matsusaka}
\address{T. Matsusaka: Institute for Advanced Research, Nagoya University, Chikusa-ku, Nagoya 464-8602, Japan}
\email{matsusaka.toshiki@math.nagoya-u.ac.jp}

\author{Ilija Tanackov}
\address{I. Tanackov: Faculty of Technical Sciences, University of Novi Sad,
Trg Dositeja Obradovi{\'c}a 6, 21000 Novi Sad, Serbia}
\email{ilijat@uns.ac.rs}

\keywords{multiple zeta-function, real zeros, asymptotic behavior, Newton's identities}
\subjclass[2010]{Primary 11M32, Secondary 11B83, 11M35}
\thanks{
Research of the first author is
supported by Grants-in-Aid for Science Research no. 18H01111, JSPS, 
that of the second author is by JP20K14292, JSPS,
and that of the
third author is by Ministry of Science and Technological Development of Serbia 
no. TR 36012.}

\begin{abstract}
We study the behavior of $r$-fold zeta-functions of Euler-Zagier type with identical
arguments $\zeta_r(s,s,\ldots,s)$ on the real line.    Our basic tool is an
``infinite'' version of Newton's classical identities.    We carry out numerical
computations, and draw graphs of $\zeta_r(s,s,\ldots,s)$ for real $s$, for several
small values of $r$.    Those graphs suggest various properties of 
$\zeta_r(s,s,\ldots,s)$, some of which we prove rigorously.    
When $s\in [0,1]$, we show that
$\zeta_r(s,s,\ldots,s)$ has $r$
asymptotes at $\Re s=1/k$ ($1\leq k\leq r$), and
determine the asymptotic behavior of $\zeta_r(s,s,\ldots,s)$ close to those asymptotes.  
Numerical computations establish the existence of several real zeros for $2\leq r\leq 10$
(in which only the case $r=2$ was previously known). 
Based on those computations, we raise a conjecture on the number of zeros for general
$r$, and gives a formula for calculating the number of zeros.
We also consider the behavior of $\zeta_r(s,s,\ldots,s)$ outside the interval
$[0,1]$.    We prove asymptotic formulas for $\zeta_r(-k,-k,\ldots,-k)$, where $k$ takes odd
positive integer values and tends to $+\infty$.
Moreover, on the number of real zeros of $\zeta_r(s,s,\ldots,s)$,
we prove that there are exactly $(r-1)$ real zeros on the interrval $(-2n,-2(n-1))$
for any $n\geq 2$.
\end{abstract}

\maketitle

\section{Introduction}

The Euler-Zagier multiple zeta-function
\begin{align}\label{EZ_def}
\zeta_r (s_1,s_2,...,s_r)= \sum_{1 \leq m_1 <m_2<...<m_r } \frac{1}{m_1^{s_1}} \cdot\frac{1}{m_2^{s_2}} \cdots\frac{1}{m_r^{s_r}}, 
\end{align}
where $s_1,\ldots,s_r$ are complex variables, has been studied extensively in
recent decades.   The series \eqref{EZ_def} is convergent absolutely when $\Re s_j$
($1\leq j\leq r$) are sufficiently large, but it can be continued meromorphically to
the whole space $\mathbb{C}^r$ (see, for example, \cite{AET01}).
Analytic properties of $\zeta (s_1,s_2,...,s_r)$ have been studied in a lot of papers,
among which we mention here a numerical study on $\zeta_2(s_1,s_2)$ by the
first-named author and M. Sh{\=o}ji \cite{MatSho14} \cite{MatSho20}.    In \cite{MatSho14}
the case $s_1=s_2=s$ was treated, and the general two-variable case was discussed in
\cite{MatSho20}.
In particular, in \cite{MatSho14} it has been shown that the distribution of the zeros of
$\zeta_2(s,s)$ is not similar to that of the Riemann zeta-function $\zeta(s)=\zeta_1(s)$
(especially the Riemann hypothesis does not hold), but rather, has a resemblance to
the distribution of the zeros of Hurwitz zeta-functions
$\zeta(s,\alpha)=\sum_{m=0}^{\infty}(m+\alpha)^{-s}$ ($0<\alpha\leq 1$).   

It is desirable to generalize the study \cite{MatSho14} \cite{MatSho20} to the
general $r$-fold case.     It is natural to consider first the case when all variables
are identical: $s_1=\cdots=s_r=s$.    We write
$$
\zeta_r(s)=\zeta_r(s,s,\ldots,s).
$$
In \cite{MatSho14}, two topics were considered; the distribution of zeros of
$\zeta_2(s)$ in the complex plane $\mathbb{C}$, and the behavior of $\zeta_2(s)$ on the
real axis $\mathbb{R}$.    The aim of the present paper is to study the behavior of
$\zeta_r(s)$ ($r\geq 2$) on $\mathbb{R}$.    The matter on the zeros of $\zeta_r(s)$
in $\mathbb{C}$ will be treated in our subsequent paper.

Our investigation is based on numerical computations on the behavior of
$\zeta_r(s)$ for $s\in\mathbb{R}$.   
We draw the graphs of $\zeta_r(s)$ for $2\leq r\leq 10$, which
suggest various properties of 
$\zeta_r(s,s,\ldots,s)$, some of which we prove rigorously.   
Our main results are as follows.
 
When $s\in [0,1]$, we show that
$\zeta_r(s,s,\ldots,s)$ has $r$
asymptotes at $\Re s=1/k$ ($1\leq k\leq r$), and
determine the asymptotic behavior of $\zeta_r(s,s,\ldots,s)$ close to those asymptotes.  
Numerical computations establish the existence of several real zeros for $2\leq r\leq 10$
(in which only the case $r=2$ was previously known in \cite{MatSho14}). 
Based on those computations, we raise a conjecture on the number of zeros for general
$r$, and gives a formula for calculating the number of zeros.

We also consider the behavior of $\zeta_r(s,s,\ldots,s)$ outside the interval
$[0,1]$.    When $s>1$, we investigate the behavior of $\zeta_r(s,s,\ldots,s)$
as $s\to \infty$, and as $r\to\infty$.
When $s<0$, we prove asymptotic formulas for $\zeta_r(-k,-k,\ldots,-k)$, where $k$ takes odd
positive integer values and tends to $+\infty$.
Moreover, on the number of real zeros of $\zeta_r(s,s,\ldots,s)$,
we prove that there are exactly $(r-1)$ real zeros on the interval $(-2n,-2(n-1))$
for any $n\geq 2$.


As we mentioned above, there is some similarity between the behavior of $\zeta_2(s)$ 
and that of Hurwitz zeta-functions.   Such similarity can also be expected for
$\zeta_r(s)$, $r\geq 3$.     Therefore the study on the real zeros of Hurwitz
zeta-functions is suggestive in our research.

Recently there has been big progress on the study of real zeros of Hurwitz
zeta-functions (see Schipani \cite{Sch11}, Nakamura \cite{Nak16a} \cite{Nak16b},
Matsusaka \cite{Mat18}, and Endo and Suzuki \cite{EndSuz19}). 
It is to be mentioned that the idea included in those articles
was already applied by Nakamura himself \cite{Nak16b} to a variant of $\zeta_2(s)$ of
Hurwitz-Lerch type, and by Sakurai \cite{SakPre} to Barnes double zeta-functions.
Now we may say that our result on the number of zeros of $\zeta_r(s,s,\ldots,s)$
on the interval 
$(-2n,-2(n-1))$ ($n\geq 2$) is an analogue of the result proved in \cite{Mat18}.

\section{Newton's identities}\label{sec-2}

In this section we prepare the basic identities among multiple zeta-functions, based 
on the classical identities of
Newton, which we will use in our computations.    

Let $\mathbb{N}$ be the set of positive integers.
The polynomial of degree $n\in\mathbb{N}$, with roots $x_1,\ldots,x_n$ may be written as
\begin{align}
\prod_{m=1}^n(x-x_m)=\sum_{r=0}^n (-1)^r e_r x^{n-r},
\end{align}
where $e_r$ are symmetric polynomilas given by
\begin{align}
e_r=e_r(x_1,x_2,\ldots,x_n)=\sum_{1\leq m_1<m_2<\cdots<m_r\leq n}x_{m_1}x_{m_2}
\cdots x_{m_r}\qquad (1\leq r\leq n).
\end{align}
For example $e_1=x_1+x_2+\cdots+x_n$, $e_2=\sum_{1\leq i<j\leq n}x_i x_j$, etc., 
and we interpret that $e_0=1$.

Define the $r$-th power sum
$$
p_r=p_r(x_1,x_2,\ldots,x_n)=x_1^r+x_2^r+\cdots+x_n^r.
$$
Newton's identities are given by the following statement:
\begin{align}\label{Newton_id}
re_r(x_1,x_2,\ldots,x_n)=\sum_{j=1}^r (-1)^{j-1}e_{r-j}(x_1,x_2,\ldots,x_n)
p_j(x_1,x_2,\ldots,x_n),
\end{align}
where $r,n\in\mathbb{N}$ with $r\leq n$.
This is due to Sir Issac Newton.   Various proofs can be found in, for example,
Zolberger \cite{Zol84}, Kalman \cite{Kal00}, and Mukherjee and Bera \cite{MukBer19}.

Now we let $x_m=m^{-s}$, where $s$ is a complex variable, and put
\begin{align}\label{N_r_def}
N_r(s)=e_r(1^{-s},2^{-s},\ldots,n^{-s})=\sum_{1\leq m_1<m_2<\cdots<m_r\leq n}
(m_1 m_2\cdots m_r)^{-s} \qquad (1\leq r\leq n).
\end{align}
We take the limit $n\to\infty$.    When $\Re s>1$, the limit of the right-hand side
converges, and is equal to $\zeta_r(s)$.    Therefore,
\begin{align}
\lim_{n\to\infty}N_r(s)=\zeta_r(s) \qquad (\Re s>1).
\end{align}
Also we see that
\begin{align}
\lim_{n\to\infty}p_j(1^{-s},2^{-s},\ldots,n^{-s})=\sum_{m=1}^{\infty}m^{-js}=\zeta(js),
\end{align}
where $\zeta(s)=\zeta_1(s)$ is the Riemann zeta-function.
Therefore, taking the limit $n\to\infty$ of Newton's identity \eqref{Newton_id}, 
we obtain
\begin{align}\label{zeta_id}
r\zeta_r(s)=\sum_{j=1}^r (-1)^{j-1}\zeta_{r-j}(s)\zeta(js) \qquad (r\in\mathbb{N}),
\end{align}
where we understand that $\zeta_0=1$.
This identity is first valid for $\Re s>1$, but then, by the meromorphic 
continuation, it can be extended to any $s\in\mathbb{C}$.

This is not a new identity.
In fact, this is essentially the well-known harmonic product formula, and Kamano \cite{Kam06}
deduced \eqref{zeta_id} (in a little more generalized form) from the harmonic product
formula.    However our argument is different.

For several small values of $r$, this formula implies:
\begin{align}\label{2_id}
\zeta_2(s)=\frac{1}{2}\left\{\zeta(s)^2-\zeta(2s)\right\},
\end{align}
\begin{align}\label{3_id}
\zeta_3(s)=\frac{1}{3}\left\{\zeta_2(s)\zeta(s)-\zeta(s)\zeta(2s)+\zeta(3s)\right\},
\end{align}
\begin{align}\label{4_id}
\zeta_4(s)=\frac{1}{4}\left\{\zeta_3(s)\zeta(s)-\zeta_2(s)\zeta(2s)+\zeta(s)\zeta(3s)
-\zeta(4s)\right\}.
\end{align}
From these formulas, it is also possible to get expressions of $\zeta_r(s)$ only in
terms of the Riemann zeta-function.   Substituting \eqref{2_id} into \eqref{3_id}, we
obtain
\begin{align}\label{3_riemann}
\zeta_3(s)=\frac{1}{6}\left\{\zeta(s)^3-3\zeta(s)\zeta(2s)+2\zeta(3s)\right\}.
\end{align}
Similarly,
\begin{align}\label{4_riemann}
\zeta_4(s)=\frac{1}{24}\left\{\zeta(s)^4-6\zeta(s)^2\zeta(2s)+3\zeta(2s)^2
+8\zeta(s)\zeta(3s)-6\zeta(4s)\right\}.
\end{align}

Consider a formal infinite polynomial 
\begin{align}\label{inf_poly}
 \prod_{m=1}^{ \infty } (x- x_m)= \lim_{n\to \infty}  \left\{( x-x_1)(x- x_2)\cdots
 (x- x_n )\right\}.
 \end{align}
When $x_m=m^{-s}$, the right-hand side is equal to
\begin{align}
\lim_{n\to \infty}\left\{x^n-x^{n-1} \sum_{m_1=1}^{n} \frac{{1} }{m_1^s}+x^{n-2}  \sum_{1 \leq m_1<m_2\leq n} \frac{{1} }{(m_1m_2)^s} -x^{n-3}\sum_{1 \leq m_1<m_2<m_3\leq n} \frac{{1} }{(m_1m_2m_3)^s}+...+\frac{{(-1)^n} }{{(n!)^s}} \right\},
\end{align}
whose each coefficient tends to $\zeta_r(s)$ as $n\to\infty$.
It should be noted that the above expansion is based on the rules of Vieta (F. Vi{\`e}te)
for the infinite polynomial \eqref{inf_poly}.
This observation shows that our fundamental formula \eqref{zeta_id} may be
formally regarded as Newton's identities for the infinite polynomial
\eqref{inf_poly}.

The above argument was inspired by the third-named author's
``new-nacci" method for solving polynomial roots.    This method is based on the convergence of successive
Fibonacci-type sequences, 
which are Newton's identities (see Tanackov et al.~\cite{Tan20}).
The new-nacci method also gives the relation
\begin{align}\label{plus_inf}
\lim_{k\to\infty}\frac{\zeta_r(k+1)}{\zeta_r(k)}=\frac{1}{r!},
\end{align}
which is useful when we consider the behavior of
$\zeta_r(s)$ when $s\to +\infty$.
A proof of this relation will be given in Section \ref{sec-5}.


\section{The double and the triple zeta-functions}\label{sec-3}

Before going into the discussion of general $r$-fold situation, in this section
we observe the behavior of $\zeta_r(s)$ on the real line for $r=2$ and 3.

\subsection{The double case}

The analytic properties of the double zeta-function are well studied (see Matsumoto \cite{Mat03} \cite{Mat04}, Kiuchi et al. \cite{KiuTan06} \cite{KTZ11}, Matsumoto and
Tsumura \cite{MatTsu15}, etc.).
The double zeta-function with identical arguments $\zeta_2(s)$, $s\in\mathbb{R}$,
can be computed by \eqref{2_id} (see Fig \ref{Fig1}).

\begin{figure}[h]
\centering
\includegraphics{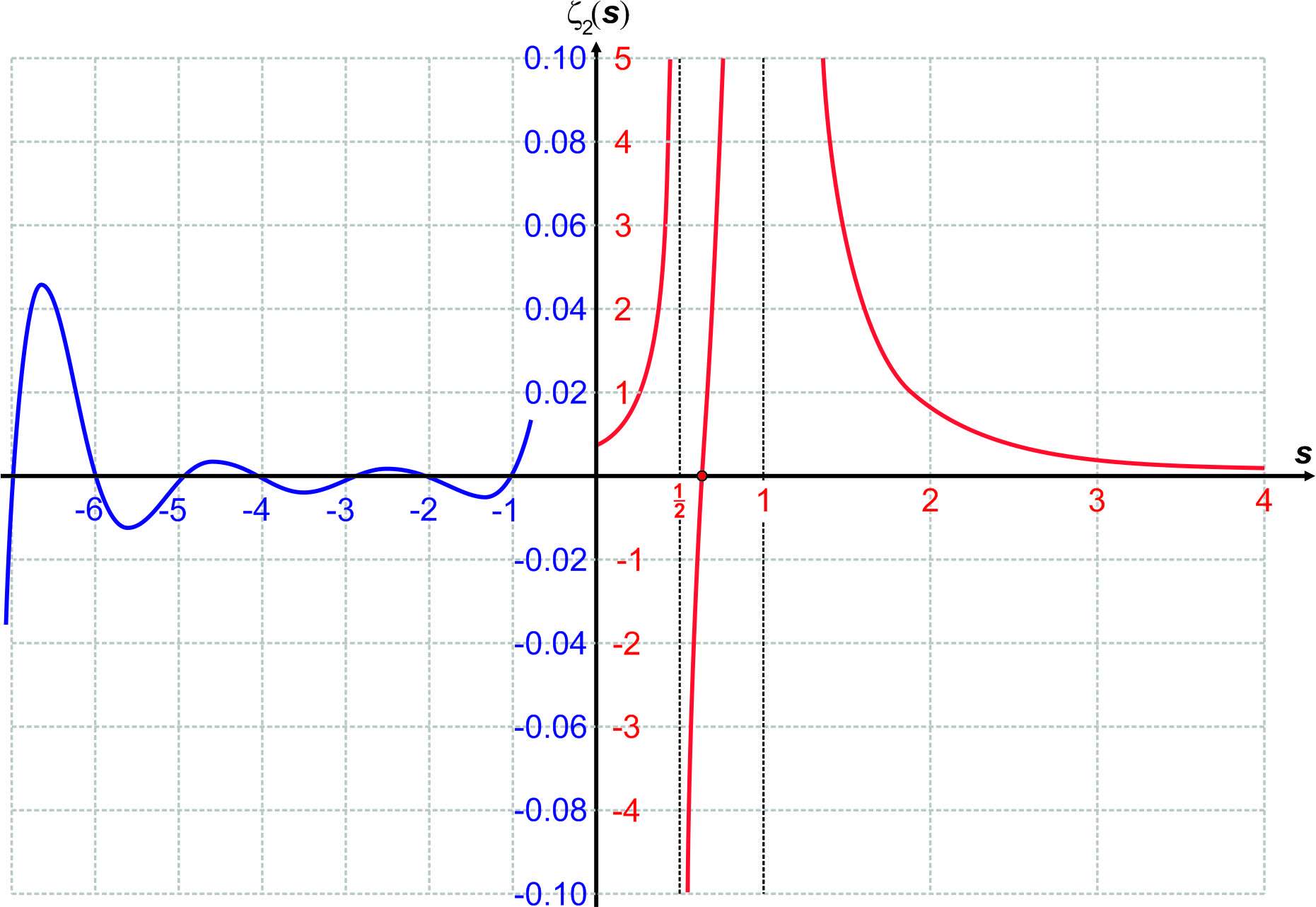}
\caption{The double zeta-function (Note that the vertical scale in the negative half-plane
is different from that in the positive half-plane)}
\label{Fig1}
\end{figure}

From \eqref{2_id} it is clear that all trivial zeros of the Riemann zeta-function $s=-2,-4,-6,\ldots$ are trivial zeros of the double zeta-function. 
The values of $\zeta_2(s)$ at $s=-1,-3,-5,\ldots$ were already computed by 
Kamano \cite{Kam06}.
For example, since $\zeta_1(-1) =-12^{-1}$, we find
\begin{equation}\label{eq:4.2.}
\zeta_2(-1)=\frac{1}{2}(\zeta_1^2(-1)-\zeta_1(-2))
=\frac{1}{2}\left(\left(-\frac{1}{12}\right)^2-0\right)=\frac{1}{288}.
\end{equation}

Let $\ell\in\mathbb{N}$.    For any $\ell$, there should be at least one
``inter-trivial'' zero (ITZ for brevity) between two trivial zeros $s=-2\ell$ and $s=-2(\ell+1)$.
The proof is as follows.   Since all trivial zeros of $\zeta(s)$ are simple, the value of
$\zeta'(-2\ell)$ is positive for even $\ell$ and negative for odd $\ell$.
From \eqref{2_id} we have 
$\zeta_2^{\prime}(s)=\zeta(s)\zeta'(s)-\zeta'(2s)$, hence 
$\zeta_2^{\prime}(-2\ell)=-\zeta'(-4\ell)$, which is negative for all $\ell$.
Therefore $\zeta_2(s)$ is decreasing at $s=-2\ell$, which implies the claim.


The graph of the double zeta-function has two vertical asymptotes $\Re s=1$ and
$\Re s=1/2$ in the positive half-plane.    The first one is caused by the factor
$\zeta(s)$, while the second one by $\zeta(2s)$, respectively.
One zero \(\zeta_2(0.6268175...)=0\) appears between these two vertical asymptotes.   We call this type of zero an ``inter-asymptotic'' zero, and write IAZ for brevity.

When $s\to +\infty$, the graph is smoothly going down to 0 because of \eqref{plus_inf},
while when $s\to -\infty$ the graph is strongly oscillated; indeed, 
$\zeta_2(-2\ell-1)\to +\infty$ as $\ell\to\infty$.

All the 
stated values, the graph, the location of zeros for $\zeta_2(s)$ mentioned above were analyzed in detail by the first-named author and Sh{\=o}ji \cite{MatSho14}.

\subsection{The triple case}

Unlike the double zeta, the analytical properties of the triple zeta-function have been more modestly
investigated (see Kiuchi and Tanigawa \cite{KiuTan08}).
The values of the triple zeta-function have been discussed for
different arguments of positive integer values (Markett \cite{Mar94}, 
Hoffman and Moen \cite{HofMoe96}, and Machide \cite{Mac13}).
%

We compute $\zeta_3(s)$ for $s\in\mathbb{R}$ by using \eqref{3_id}, \eqref{3_riemann}
(see Fig \ref{Fig3}).
For example, $\zeta_3(-1)=139/51840$.

\begin{figure}[h]
\centering
\includegraphics{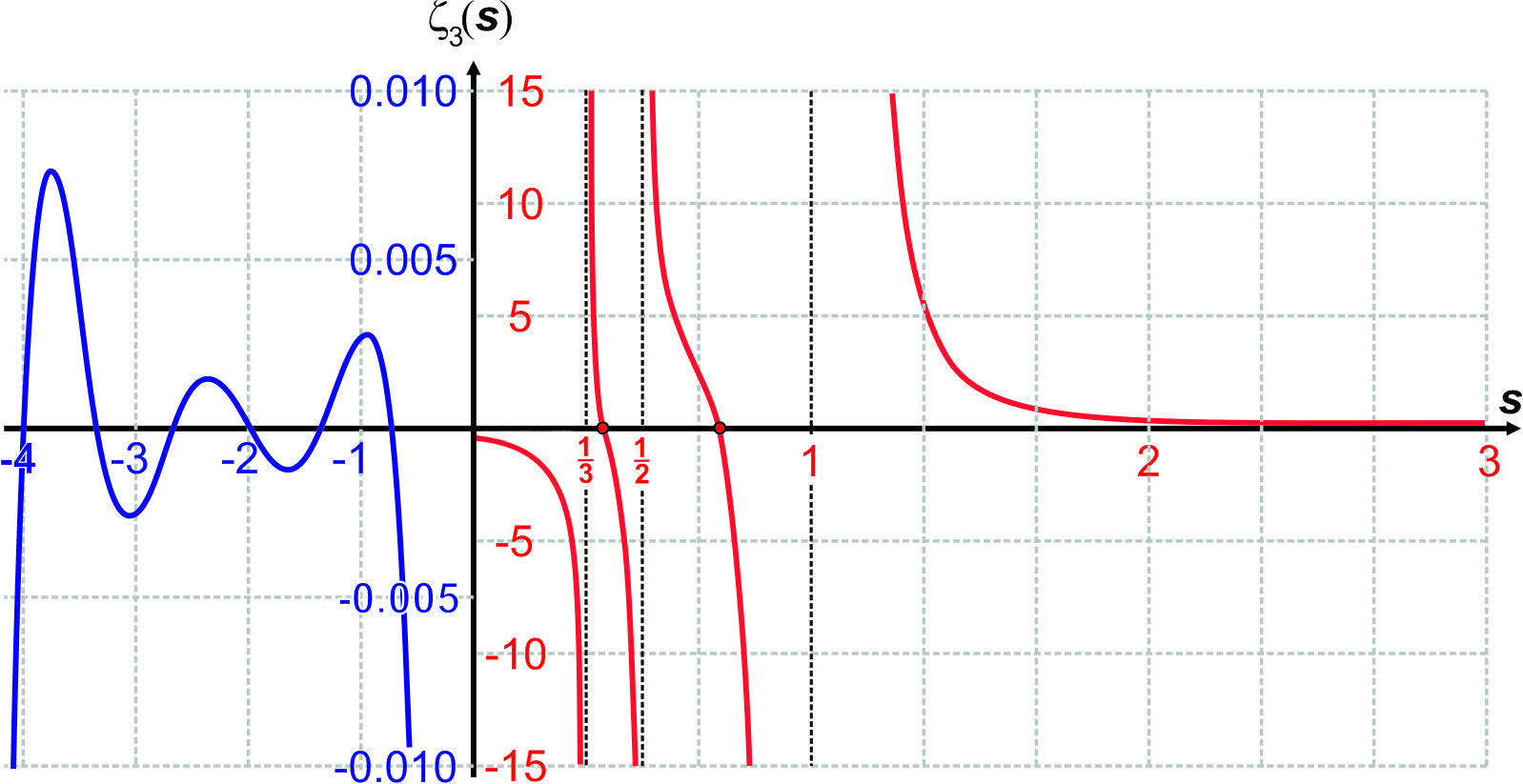}
\caption{The triple zeta-function (Note that the vertical scale in the negative half-plane
is different from that in the positive half-plane)}
\label{Fig3}
\end{figure}

All trivial zeros of the Riemann zeta-function are zeros of $\zeta_3(s)$.    
It seems from the graph that, on the interval between consecutive trivial zeros of the Riemann zeta-function, $\zeta_3(s)$ has two ITZs.
In the positive half-plane, the graph of the triple zeta-function has three vertical asymptotes: $\Re s=1, 1/2$ and $1/3$.
Vertical asymptotes $\Re s=1$ and $1/2$ are inherited from the double zeta-function,
while the new vertical asymptote $\Re s=1/3$ is coming from the factor $\zeta(3s)$ 
in \eqref{3_id}, \eqref{3_riemann}.
There are two IAZs between the vertical asymptotes \(\zeta_3(+0.385782)=0\) and
\(\zeta_3(+0.724902)=0\).
The behavior of $\zeta_3(s)$ when $s\to +\infty$ can be explained by \eqref{plus_inf}.


\section{Multiple zeta-functions on the interval $[0,1]$}

Now we proceed to the study of general $r$-fold multiple zeta-functions.
We first investigate the behavior in the interval $[0,1]$, because this is
the most intriguing and dynamic interval.
 
\subsection{The cases $\mathbf{{\it r}=4, 5}$ and 6}

In this subsection, we present the graphs of $\zeta_r(s)$, $s\in [0,1]$, for
$r=4,5$ and $6$.
 
The quadruple zeta-function was studied, for example, by Machide \cite{Mac19}.
The theory of the fourth power mean of the Riemann zeta-function
(Motohashi \cite{Mot93}, Ivi{\'c} and Motohashi \cite{IviMot95}) is somewhat relevant.

The quadruple zeta-function $\zeta_4(s)$ has four 
asymptotes: $\Re s=1, 1/2, 1/3$ and $1/4$, and
has four IAZs in the interval \( s\in [0,1]\) 
(see Fig \ref{Fig6}):

\begin{itemize}[noitemsep]
    \item One IAZ $\in (1/4,1/3)$:\;\(\zeta_4(0,27886...)\approx 0.\)
    
    \item One IAZ $\in (1/3,1/2)$:\;\(\zeta_4(0,387072...)\approx 0 .\)
    \item Two IAZs $\in (1/2,1)$: \(\zeta_4(0,571348...) \approx 0 \) and \(\zeta_4(0,783444...)\approx 0. \)

\end{itemize}

The quadruple zeta function \(\zeta_4(s)\) has one minimum \(\zeta_4(0,693658...) \approx -4,0699572... \) between vertical
asymptotes $1/2$ and 1.

There are two new features here:

(i) When $s\to 1/2$, the value $\zeta_4(s)$ tends to $+\infty$ for the both of the limits
$s\to 1/2+0$ and $s\to 1/2-0$.    The reason is that the pole of $\zeta_4(s)$ at $s=1/2$
is of order 2, because of the term $3\zeta(2s)^2$ in \eqref{4_riemann}.

(ii) In the interval $(1/2,1)$ there are two zeros.


\begin{figure}[h]
\centering
\includegraphics{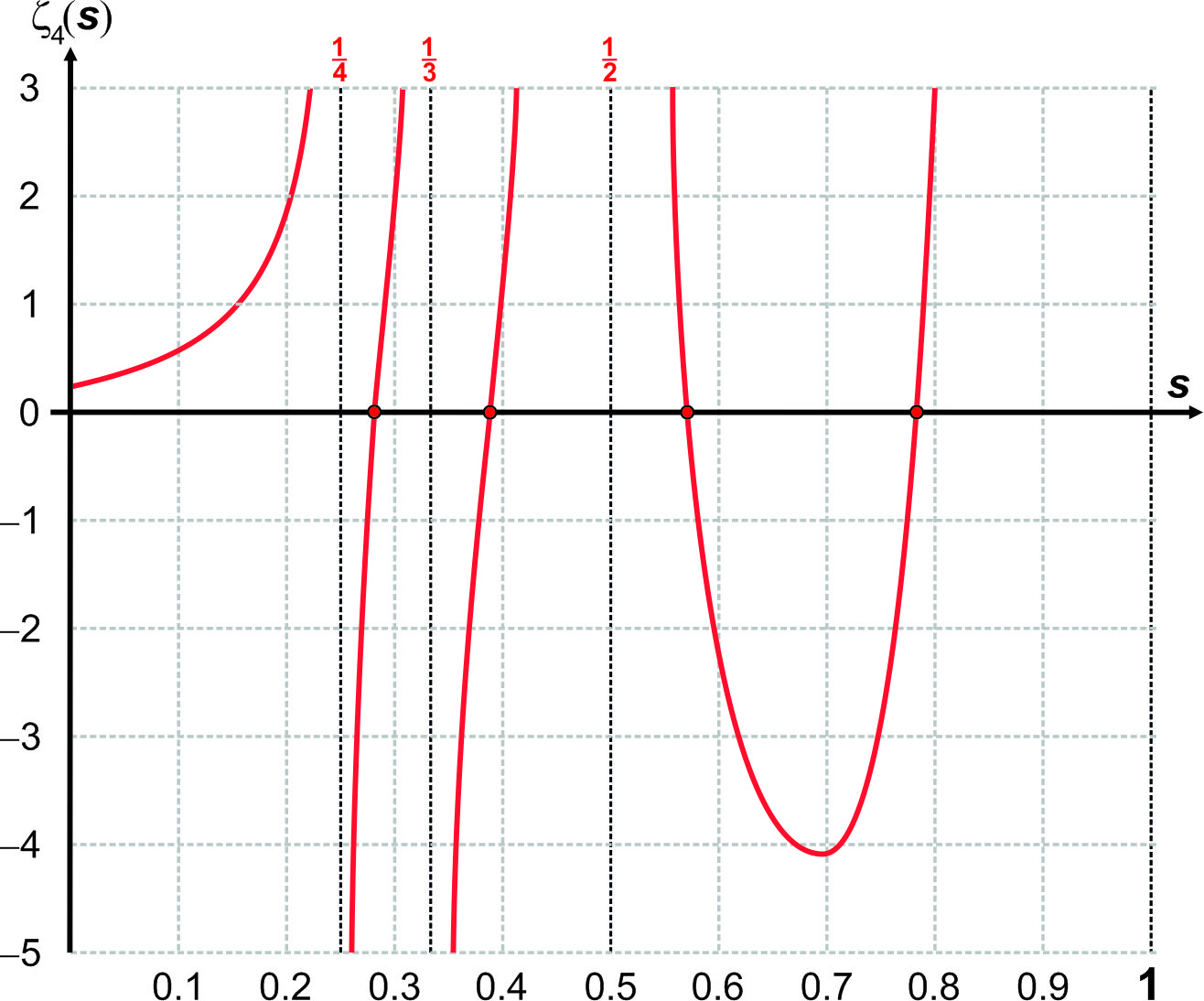}
\caption{The quadruple zeta-function for $s\in [0,1]$}
\label{Fig6}
\end{figure}

Next,
the five-fold zeta-function $\zeta_5(s)$ has five asymptotes: $\Re s=1, 1/2, 1/3, 1/4$ 
and $1/5$, and has five IAZs in \( s\in [0,1]\) (Fig \ref{Fig7}):

\begin{itemize}[noitemsep]
    \item One IAZ $\in (1/5,1/4)$: \(\zeta_5(0,218315...)\approx 0.\)
    
    \item One IAZ $\in (1/4,1/3)$: \(\zeta_5(0,278346...)\approx 0.\)
    \item One IAZ $\in (1/3,1/2)$: \(\zeta_5(0,423505...) \approx 0. \) 
    \item Two IAZs $\in (1/2,1)$: \(\zeta_5(0,643861...)\approx 0 \) and \(\zeta_5(0,881698...)\approx 0.\)
 \end{itemize}
Also \(\zeta_5(s)\) has one maximum \(\zeta_5(0,776027...)\approx  +6,003808... \) between vertical asymptotes $1/2$ and 1.
We see that $\zeta_5(s)\to -\infty$ as $s\to 1/2\pm0$.    

\begin{figure}[h]
\centering
\includegraphics{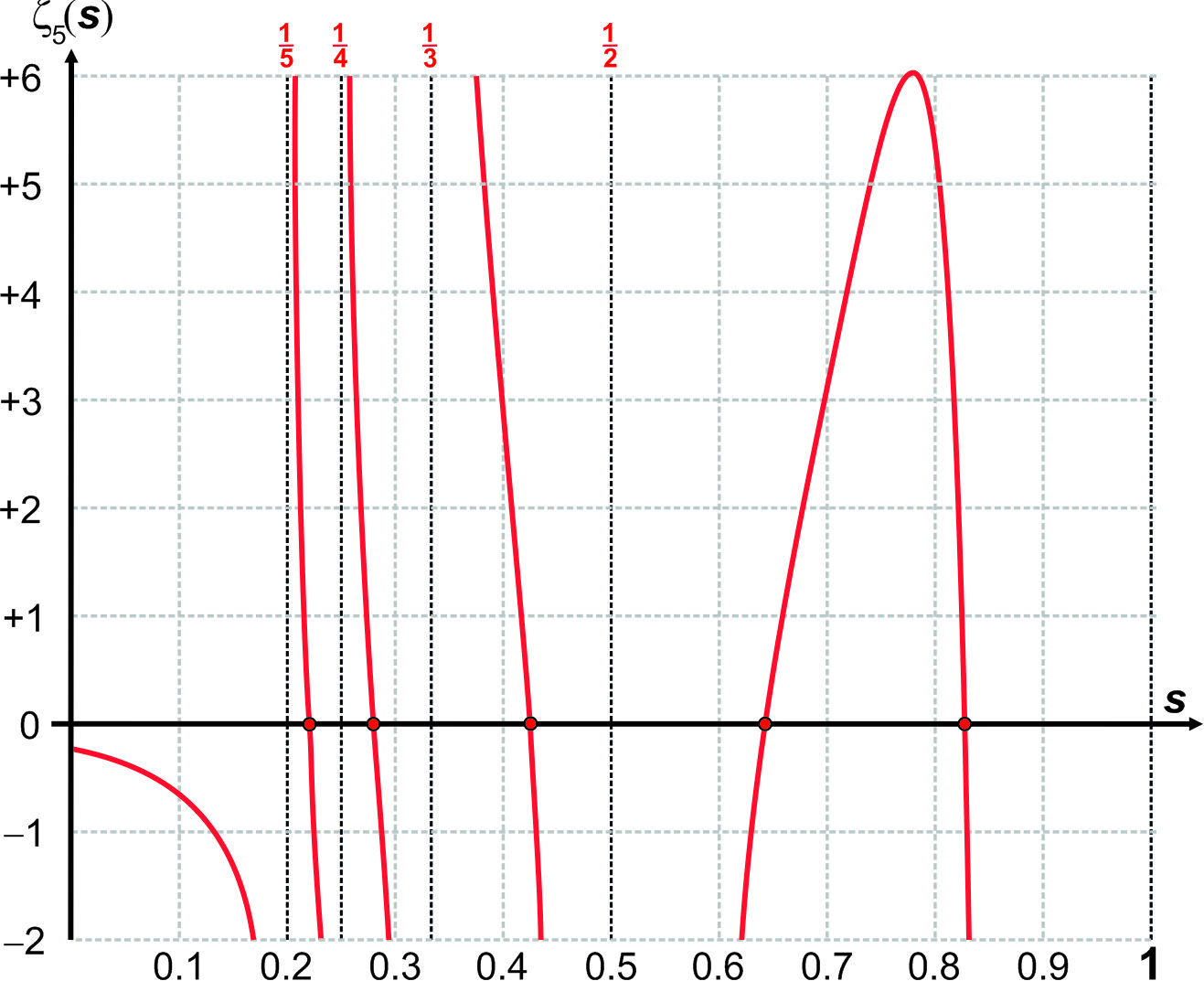}
\caption{The five-fold zeta-function for $s\in [0,1]$}
\label{Fig7}
\end{figure}

The six-fold zeta-function has six asymptotes: $\Re s=1, 1/2, 1/3, 1/4, 1/5$ 
and  $1/6$, and has eight IAZs in \( s\in [0,1]\) (Fig \ref{Fig8}): 

\begin{itemize}[noitemsep]
    \item One IAZ $\in (1/6,1/5)$: \(\zeta_6(0,179347...)\approx 0.\)
    
    \item One IAZ $\in (1/5,1/4)$: \(\zeta_6(0,217682...)\approx 0.\)
    \item  One IAZ $\in (1/4,1/3)$: \(\zeta_6(0,279817...) \approx0 \).
    \item Two IAZs $\in (1/3,1/2)$: \(\zeta_6(0,362716...)\approx 0 \) and \(\zeta_6(0,419205...)\approx 0. \)
    
    \item Three IAZs $\in (1/2,1)$: \(\zeta_6(0,549629...)\approx 0 \), \(\zeta_6(0,696745...)\approx 0 \) and
\(\zeta_6(0,848546...)\approx 0.\) 
\end{itemize}

The six-fold zeta-function \(\zeta_6(s)\) has two minimums and one maximum in \( s\in [0,1]\) :

\begin{itemize}[noitemsep]
    \item One minimum between vertical asymptotes $1/3$ and $1/2$: \(\zeta_6(0,386562...)\approx  -2,462682...\)
    \item One maximum \(\zeta_6(0,578067...)\approx  +5,283455...\)and one minimum \(\zeta_6(0,818945...)\approx  -10,900018...\)
between vertical asymptotes $1/2$ and 1.
\end{itemize}

There again appear two new features.    The case $r=6$ of \eqref{zeta_id} gives
\begin{align}
\zeta_6(s)=\frac{1}{6}\left\{\zeta_5(s)\zeta(s)-\zeta_4(s)\zeta(2s)+\zeta_3(s)\zeta(3s)
-\zeta_2(s)\zeta(4s)+\zeta(s)\zeta(5s)-\zeta(6s)\right\},
\end{align}
from which we can see that the orders of the poles of $\zeta_6(s)$ at $s=1/3$ and
$s=1/2$ are 2 and 3, respectively.   This gives the behavior of $\zeta_6(s)$ around
the asymptotes $1/3$ and $1/2$, indicated in the figure.
Moreover, the interval $(1/2,1)$ now includes three IAZ, and the interval $(1/3,1/2)$
also includes more than one IAZ.

\begin{figure}[h]
\centering
\includegraphics{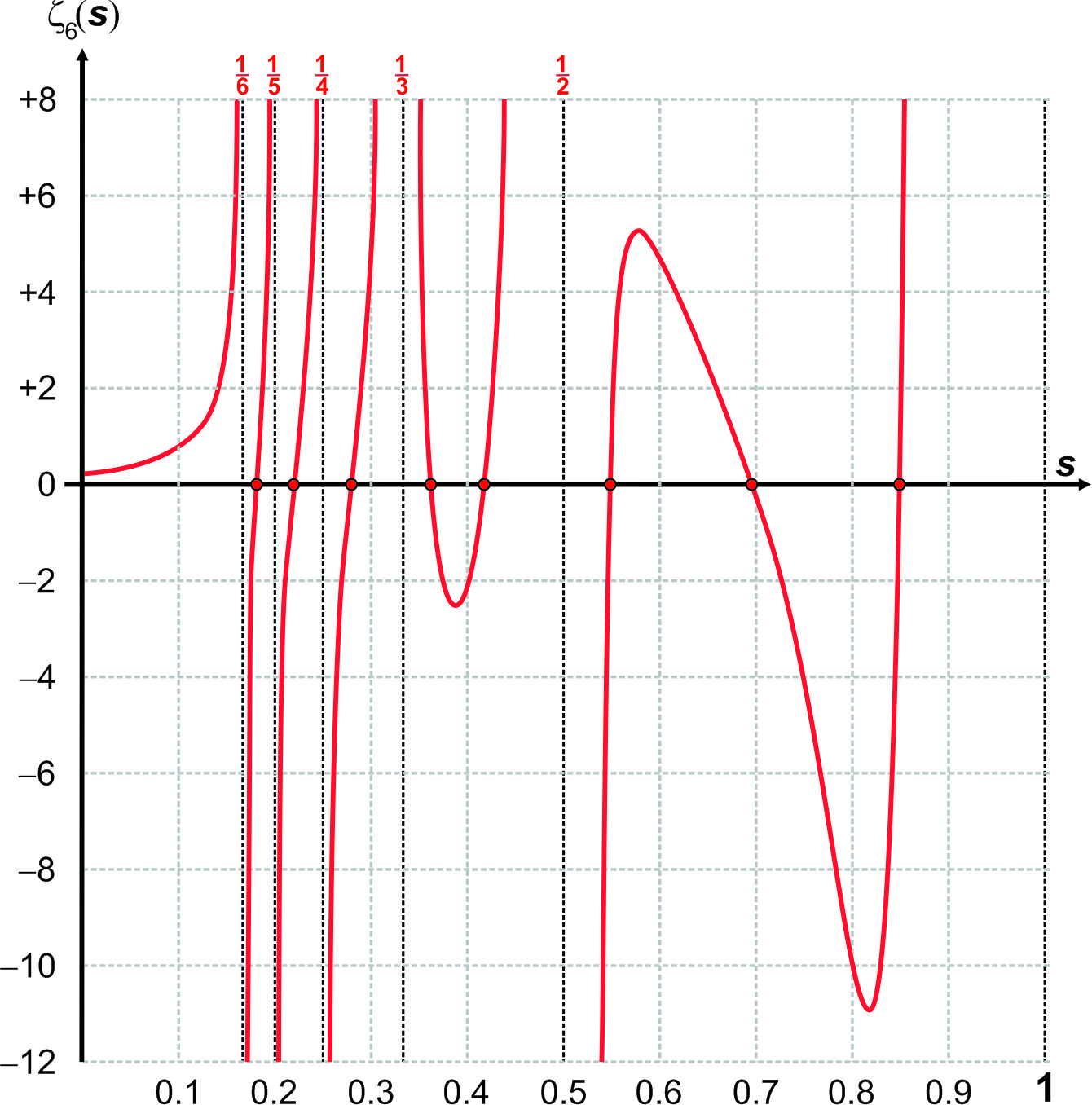}
\caption{The six-fold zeta-function for $s\in [0,1]$}
\label{Fig8}
\end{figure}
 
\subsection{Theorems and a conjecture}
 
The above numerical data suggests several properties of $\zeta_r(s)$, some of which we will
prove here.   The first result is:

\begin{theorem}\label{Th<1/r}
On the interval $0\leq s< 1/r$, the value of $\zeta_r(s)$ is positive for even $r$,
and negative for odd $r$.    In particular, there is no zero of $\zeta_r(s)$ in
this interval.
\end{theorem}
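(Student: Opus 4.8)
The plan is to argue by induction on $r$, using the fundamental recursion \eqref{zeta_id} together with a single elementary sign fact about the Riemann zeta-function. That sign fact is: $\zeta(x)<0$ for every real $x\in[0,1)$. I would record it at the outset, since it is the only analytic input. Writing $\zeta(x)=(1-2^{1-x})^{-1}\sum_{n=1}^{\infty}(-1)^{n-1}n^{-x}$, the Dirichlet eta-series is strictly positive for $x>0$ (alternating with decreasing terms), while $1-2^{1-x}<0$ on $(0,1)$; together with the known value $\zeta(0)=-1/2$, this gives $\zeta(x)<0$ throughout $[0,1)$.

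The observation that makes the interval $[0,1/r)$ the right one is that for $s\in[0,1/r)$ we have $js\leq rs<1$ for every $j$ with $1\leq j\leq r$, so each factor $\zeta(js)$ on the right-hand side of \eqref{zeta_id} is strictly negative, and in particular finite, so that $\zeta_r(s)$ has no pole there. Moreover, since $r-j<r$ for $1\leq j\leq r$, we have $1/r\leq 1/(r-j)$, so the interval $[0,1/r)$ is contained in $[0,1/(r-j))$, which is precisely the interval on which the inductive hypothesis controls the sign of $\zeta_{r-j}(s)$. This nesting is what lets the induction close.

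Now I would run the induction. The base case is $r=1$: on $[0,1)$ one has $\zeta_1(s)=\zeta(s)<0$ by the sign fact, matching $(-1)^1$; the degenerate convention $\zeta_0\equiv 1$ carries sign $(-1)^0$. For the inductive step, fix $s\in[0,1/r)$ and inspect the $j$-th summand $(-1)^{j-1}\zeta_{r-j}(s)\zeta(js)$ of \eqref{zeta_id}. Its sign is the product $(-1)^{j-1}\cdot(-1)^{r-j}\cdot(-1)$, where the middle factor is the sign of $\zeta_{r-j}(s)$ furnished by the inductive hypothesis (and by $\zeta_0\equiv 1$ in the boundary term $j=r$), and the last factor is the sign of $\zeta(js)$. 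This product equals $(-1)^{(j-1)+(r-j)+1}=(-1)^r$, independently of $j$. Hence every term of the sum carries the same sign $(-1)^r$, no cancellation occurs, and $r\,\zeta_r(s)$ is $(-1)^r$ times a strictly positive number. Dividing by $r$ yields $\zeta_r(s)>0$ for even $r$ and $\zeta_r(s)<0$ for odd $r$, and in particular $\zeta_r(s)$ vanishes nowhere on $[0,1/r)$.

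There is essentially no hard analytic obstacle: the whole argument turns on the fortunate fact that the sign of the generic term of \eqref{zeta_id} is independent of $j$, so the terms reinforce rather than compete. The only points demanding care are bookkeeping ones, namely verifying the interval nesting $[0,1/r)\subseteq[0,1/(r-j))$ so that the inductive hypothesis genuinely applies, treating the endpoint $s=0$ (where every $\zeta(js)$ equals $\zeta(0)=-1/2<0$), and handling the boundary term $j=r$ with $\zeta_0\equiv 1$ correctly inside the sign count.
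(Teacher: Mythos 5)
Your proposal is correct and follows essentially the same route as the paper: induction on $r$ via Newton's identity \eqref{zeta_id}, using $\zeta(x)<0$ on $[0,1)$ to see that every summand $(-1)^{j-1}\zeta_{r-j}(s)\zeta(js)$ carries the same sign $(-1)^r$. The only differences are presentational — you make explicit the interval nesting $[0,1/r)\subseteq[0,1/(r-j))$ and the eta-function justification of the sign of $\zeta$, which the paper leaves implicit.
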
    

\begin{proof}
When $r=1$, it is well-known that $\zeta(s)<0$ for $0\leq s<1$.   Then, from
\eqref{2_id} it is clear that $\zeta_2(s)>0$ for $0\leq s<1/2$.

In general, we prove the assertion by induction.
From \eqref{zeta_id} we have
\begin{align}\label{zeta_id2}
\zeta_r(s)=\frac{1}{r}\sum_{j=1}^{r-1}(-1)^{j-1}\zeta_{r-j}(s)\zeta(js)+
\frac{1}{r}(-1)^{r-1}\zeta(rs).
\end{align}
Let $0\leq s< 1/r$.    Then we have $\zeta(rs)<0$, hence the signature of the term 
$r^{-1}(-1)^{r-1}\zeta(rs)$ is given by $(-1)^r$.
Using the induction assumption, we have that $\zeta_{r-j}(s)$ is non-zero and its 
signature is
given by $(-1)^{r-j}$, so the signature of the term $(-1)^{j-1}\zeta_{r-j}(s)\zeta(js)$
is given by $(-1)^{(j-1)+(r-j)+1}=(-1)^r$.
Therefore all terms on the right-hand side of \eqref{zeta_id2} have the same signature
$(-1)^r$, and so the signature of $\zeta_r(s)$.
\end{proof}

Using Theorem \ref{Th<1/r}, we can prove the following theorem on
the asymptotic behavior of $\zeta_r(s)$ near the asymptotes.

\begin{theorem}\label{Th-asymp}
{\rm (i)} The function $\zeta_r(s)$ has poles only at $s=1/k$ $(1\leq k\leq r)$ of 
order $[r/k]$, where $[x]$ denotes the integer part of $x$.

{\rm (ii)} As $s\to 1/k$, the asymptotic behavior of $\zeta_r(s)$ is given by
\begin{align}\label{Th_formula}
\zeta_r(s)\sim C_r(k)(ks-1)^{-[r/k]},
\end{align}
where $C_r(k)$ is a non-zero real constant, whose singature coincides with the signature of
$(-1)^{r+[r/k]}$.    
\end{theorem}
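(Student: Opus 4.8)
The plan is to package the recursion \eqref{zeta_id} into its generating-function form and read off the singular behaviour at each $s=1/k$ directly. With the convention $\zeta_0=1$, the identity \eqref{zeta_id} is equivalent, coefficient by coefficient in $t$, to
\begin{align}\label{genfn}
\sum_{r=0}^{\infty}\zeta_r(s)t^r=\exp\left(\sum_{j=1}^{\infty}\frac{(-1)^{j-1}}{j}\zeta(js)t^j\right),
\end{align}
which is the classical exponential form of Newton's identities. Since the coefficient of $t^r$ on either side only involves $\zeta(js)$ with $j\leq r$, I would read \eqref{genfn} as an identity of meromorphic functions of $s$ obtained by finitely many applications of \eqref{zeta_id}; in particular $\zeta_r(s)$ is a polynomial in $\zeta(s),\zeta(2s),\ldots,\zeta(rs)$. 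As each $\zeta(js)$ is holomorphic except for a simple pole at $s=1/j$, this already shows that $\zeta_r(s)$ can have poles only at $s=1/k$ with $1\leq k\leq r$, giving the location statement in (i).

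To get the order and leading term I would fix $k$ with $1\leq k\leq r$ and localise \eqref{genfn} at $s=1/k$. Among $j=1,\ldots,r$ only $\zeta(ks)$ is singular there, and it has a simple pole, so I set $u=(ks-1)^{-1}$ and write $\zeta(ks)=u+\psi_k(s)$ with $\psi_k$ holomorphic near $s=1/k$. Splitting off the $j=k$ summand, \eqref{genfn} becomes
\begin{align}\label{split}
\sum_{r=0}^{\infty}\zeta_r(s)t^r=\exp\left(\frac{(-1)^{k-1}}{k}\,t^k u\right)\exp\bigl(H(s,t)\bigr),
\end{align}
where $H(s,t)=\frac{(-1)^{k-1}}{k}t^k\psi_k(s)+\sum_{j\neq k}\frac{(-1)^{j-1}}{j}\zeta(js)t^j$ has $t$-coefficients holomorphic near $s=1/k$. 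Expanding the first exponential in powers of $u$ and extracting the coefficient of $t^r$, one finds that $\zeta_r(s)$ is a polynomial in $u$ of degree $m:=[r/k]$ (the highest admissible power, since $u^n$ carries $t^{kn}$ and $kn\leq r$ forces $n\leq m$), with coefficients holomorphic at $s=1/k$. This exhibits a pole of order at most $m$ and the shape \eqref{Th_formula}, and it remains to show the top coefficient is nonzero and to compute its sign.

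The leading coefficient is
\begin{align}\label{Crk}
C_r(k)=\frac{1}{m!}\left(\frac{(-1)^{k-1}}{k}\right)^m\,[t^{\rho}]\exp\bigl(H(1/k,t)\bigr),\qquad \rho:=r-mk=r\bmod k,
\end{align}
and the decisive point is the evaluation of $[t^{\rho}]\exp(H(1/k,t))$. Since $0\leq\rho<k$, the term $t^k\psi_k$ in $H$ never reaches degree $\rho$, so only the summands with $j\leq\rho<k$ survive; but these are exactly the terms of \eqref{genfn} that feed $[t^{\rho}]$, whence $[t^{\rho}]\exp(H(1/k,t))=\zeta_{\rho}(1/k)$. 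Now $\rho<k$ gives $1/k<1/\rho$, so Theorem \ref{Th<1/r} applies at $s=1/k$ and yields $\zeta_{\rho}(1/k)\neq0$ with sign $(-1)^{\rho}$ (interpreting $\zeta_0=1$ when $\rho=0$). Hence $C_r(k)\neq0$, which upgrades the pole order to exactly $m=[r/k]$ and finishes (i). For the sign, $C_r(k)$ carries sign $(-1)^{(k-1)m+\rho}$, and using $r=mk+\rho$ gives $(k-1)m+\rho=r-m$, so the sign equals that of $(-1)^{r-m}=(-1)^{r+[r/k]}$, proving (ii).

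The main obstacle is precisely the non-vanishing and sign of the top coefficient: several summands of \eqref{zeta_id} share the maximal pole order $m$, namely $j=k$ together with $j=1,\ldots,\rho$, so a naive term-by-term induction cannot by itself rule out cancellation at the top order. The generating-function bookkeeping in \eqref{split}--\eqref{Crk} resolves this automatically by collapsing that entire family of contributions into the single value $\zeta_{\rho}(1/k)$, at which point the positivity input of Theorem \ref{Th<1/r} does the rest. A purely inductive proof via \eqref{zeta_id} is also possible, but it would have to verify this same cancellation-free identity by hand.
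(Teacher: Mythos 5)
Your proof is correct, but it takes a genuinely different route from the paper. The paper argues by induction on $r$ directly from the recursion \eqref{zeta_id2}: near $s=1/k$ it isolates the terms carrying the maximal pole order, arrives at the recursive expression \eqref{C_r_k_def} for $C_r(k)$, and rules out cancellation among the several competing top-order contributions by checking that every term in that expression has the same sign $(-1)^{r+[r/k]}$ (using the inductive sign hypothesis for $C_{r-j}(k)$, plus Theorem \ref{Th<1/r} only in the range $k>r/2$, where $D_{r-k}(k)=\zeta_{r-k}(1/k)$); the closed-form values of $C_r(k)$ are then obtained afterwards, in Theorem \ref{Th-coeff}, by separate inductions that reuse \eqref{zeta_id}. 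You instead pass to the exponential form of Newton's identities, split off the singular part $\zeta(ks)=(ks-1)^{-1}+\psi_k(s)$ inside the exponential, and read off $\zeta_r(s)$ as a polynomial of degree $m=[r/k]$ in $u=(ks-1)^{-1}$ with coefficients holomorphic at $s=1/k$, whose top coefficient collapses to the single product $\frac{(-1)^{(k-1)m}}{k^m\, m!}\,\zeta_{\rho}(1/k)$ with $\rho=r-mk$. This is exactly the cancellation-free identity the paper verifies by sign bookkeeping, but obtained structurally; it needs Theorem \ref{Th<1/r} only once (for $\zeta_{\rho}(1/k)$, legitimate since $\rho<k$), and as a genuine bonus it delivers the explicit constants of Theorem \ref{Th-coeff} (formulas \eqref{Th_formula2}--\eqref{Th_formula6}) as an immediate corollary rather than as a separate theorem. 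What the paper's approach buys in exchange is elementarity: it never leaves the finite recursion \eqref{zeta_id} and requires no generating-function formalism or interchange between formal series in $t$ and meromorphic continuation in $s$ (a point you handle correctly by noting that each coefficient identity involves only finitely many $\zeta(js)$).
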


\begin{proof}
When $r=1$, $\zeta_1(s)=\zeta(s)$ is the Riemann zeta-function, which has the only
pole at $s=1$ and this pole is simple with residue 1.    Therefore the theorem
for $r=1$ clearly holds. 

Now assume $r\geq 2$, and we prove the theorem by induction.
We use \eqref{zeta_id2} again.   On the right-hand side of \eqref{zeta_id2},
the factor $\zeta(js)$ has the only pole at $s=1/j$, while by induction assumption,
the poles of $\zeta_{r-j}(s)$ are at $s=1,1/2,\ldots,1/(r-j)$.
Therefore, for any fixed $k$ ($1\leq k\leq r$), the factors on the right-hand side of
\eqref{zeta_id2}, which are singular at $s=1/k$, are $\zeta(ks)$ and
$\zeta_{r-j}(s)$ with $j\leq r-k$.

Consider the case $k=r$.    The singularity at $s=1/r$ appears only in the last
term $r^{-1}(-1)^{r-1}\zeta(rs)$.    Therefore
\begin{align}
\zeta_r(s)\sim \frac{1}{r}(-1)^{r-1}\frac{1}{rs-1}
\end{align}
as $s \to 1/r$,
which implies \eqref{Th_formula} for $k=r$, with $C_r(r)=r^{-1}(-1)^{r-1}$.

Next, let $1\leq k\leq r-1$.
The asymptotic behavior of the singular factors at $s=1/k$ on the right-hand side of
\eqref{zeta_id2} can be written down by induction
assumption.    
For $j\leq r-k$ with $j\neq k$, we have
$$
\zeta_{r-j}(s)\zeta(js)\sim \zeta(j/k)C_{r-j}(k)(ks-1)^{-[(r-j)/k]}
$$ 
as $s\to 1/k$.
As for the term $\zeta_{r-k}(s)\zeta(ks)$, the factor $\zeta(ks)$ is always singular at 
$s=1/k$, while $\zeta_{r-k}(s)$ is singular only when $k\leq r-k$, that is $k\leq r/2$.
Therefore we may write
\begin{align}\label{zeta_asymp}
\zeta_r(s)\sim A(s)+B(s)
\end{align}
as $s\to 1/k$, where
\begin{align}\label{A_def}
A(s)=\frac{1}{r}\sum_{\stackrel{1\leq j\leq r-k}{j\neq k}}(-1)^{j-1}
\zeta\left(\frac{j}{k}\right)C_{r-j}(k)(ks-1)^{-[(r-j)/k]}
\end{align}
and
\begin{align}
B(s)=\left\{
\begin{array}{lll}
\displaystyle{\frac{1}{r}(-1)^{k-1}C_{r-k}(k)
(ks-1)^{-[(r-k)/k]}\frac{1}{ks-1}}  & {\rm if} & k\leq r/2,\\
\displaystyle{\frac{1}{r}(-1)^{k-1}\zeta_{r-k}\left(\frac{1}{k}\right)
\frac{1}{ks-1}}  & {\rm if}  &  k>r/2,
\end{array}
\right.
\end{align}
say.    Since
$(ks-1)^{-[(r-k)/k]-1}=(ks-1)^{-[r/k]}$, and if $k>r/2$ then $[r/k]=1$, we can
unify the above expression of $B(s)$ as
$$
B(s)
=\frac{1}{r}(-1)^{k-1}D_{r-k}(k)(ks-1)^{-[r/k]},
$$
where 
\begin{align}
D_{r-k}(k)=\left\{
\begin{array}{lll}
C_{r-k}(k) & {\rm if} & k\leq r/2,\\
\zeta_{r-k}(1/k) & {\rm if} &  k>r/2.
\end{array}
\right.
\end{align}
Since $[(r-j)/k]\leq [r/k]$ for any $j\geq 1$, the main contribution on the right-hand
side of \eqref{zeta_asymp} would be coming from $B(s)$, and the part of $A(s)$
consisting of only $j$ satisfying $[(r-j)/k]=[r/k]$.    That is, it would be that
\begin{align}\label{zeta_sim}
\zeta_r(s)\sim C_r(k)(ks-1)^{-[r/k]},
\end{align}
where
\begin{align}\label{C_r_k_def}
C_r(k)= \frac{1}{r}\left\{{\sum_j}^* (-1)^{j-1}\zeta\left(\frac{j}{k}\right)C_{r-j}(k)
+(-1)^{k-1}D_{r-k}(k)\right\},
\end{align}
and here, the symbol $\sum^*$ stands for the summation on $j$ satisfying
$j\leq r-k$, $j\neq k$ and $[(r-j)/k]=[r/k]$. 

To establish \eqref{zeta_sim} rigorously, it is necessary to check that
$C_r(k)\neq 0$.    This can be seen by observing the signature.    
By induction assumption, $C_{r-j}(k)$ is non-zero, and the signature of $C_{r-j}(k)$ coincides with the 
signature of $(-1)^{r-j+[(r-j)/k]}$.
Therefore the signature of $(-1)^{k-1}C_{r-k}(k)$ is 
$$(-1)^{k-1+(r-k+[(r-k)/k])}=(-1)^{r+[r/k]},$$
while the signature of $(-1)^{j-1}\zeta(j/k)C_{r-j}(k)$ is
$$
(-1)^{(j-1)+1+(r-j+[(r-j)/k]) }=(-1)^{r+[r/k]}
$$
for any $j$ which appears in the sum $\sum^*$
(because of the condition on $\sum^*$, and the fact $\zeta(j/k)<0$).
Moreover, if $k>r/2$, then $1/k<1/(r-k)$ so, by Theorem \ref{Th<1/r}, the signature 
of $(-1)^{k-1}\zeta_{r-k}(1/k)$ is $(-1)^{(k-1)+(r-k)}=(-1)^{r-1}$, and this is 
further equal to 
$(-1)^{r+[r/k]}$ because now $[r/k]=1$.   
It follows that the signatures of all terms on the right-hand side of 
\eqref{C_r_k_def} are the same.    Therefore obviously $C_r(k)\neq 0$, and its signature
coincides with $(-1)^{r+[r/k]}$.    The theorem is now proved.
\end{proof}

Moreover, we can determine the explicit values of the constants $C_r(k)$.

\begin{theorem}\label{Th-coeff}
We have
\begin{align}\label{Th_formula2}
C_r(r)=(-1)^{r-1}\frac{1}{r} \qquad (r\geq 1),
\end{align}
\begin{align}\label{Th_formula3}
C_r(k)=\frac{(-1)^{k-1}}{k}\zeta_{r-k}\left(\frac{1}{k}\right) \qquad 
(r/2< k\leq r-1),
\end{align}
\begin{align}\label{Th_formula4}
C_r(1)=\frac{1}{r!} \qquad (r\geq 1),
\end{align}
and, for any $k\geq 2$, 
\begin{align}\label{Th_formula5}
&C_{kr}(k)=\frac{(-1)^{(k-1)r}}{k^r\cdot r!} \qquad (r\geq 1),\\
&C_{kr+\ell}(k)=\frac{(-1)^{(k-1)r}}{k^r\cdot r!}\zeta_{\ell}\left(\frac{1}{k}\right)
\qquad (r\geq 1, 1\leq \ell\leq k-1). \label{Th_formula6}
\end{align}
\end{theorem}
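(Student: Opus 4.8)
The plan is to prove \eqref{Th_formula2}--\eqref{Th_formula6} simultaneously by recognizing them as the single identity
\[
C_r(k)=\frac{\alpha^q}{q!}\,\zeta_\ell\!\left(\frac1k\right),\qquad \alpha=\frac{(-1)^{k-1}}{k},
\]
where $r=kq+\ell$ with $q=[r/k]$ and $0\le\ell\le k-1$ (and $\zeta_0=1$). Indeed \eqref{Th_formula5} is the subcase $\ell=0$ and \eqref{Th_formula6} the subcase $\ell\ge1$; \eqref{Th_formula2} is $k=r$ (so $q=1$, $\ell=0$); \eqref{Th_formula4} is $k=1$ (so $q=r$, $\ell=0$); and \eqref{Th_formula3} is $q=1$ with $\ell=r-k$. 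The engine I would use is the exponential generating function
\[
\sum_{r=0}^\infty \zeta_r(s)\,t^r=\exp\!\left(\sum_{j=1}^\infty \frac{(-1)^{j-1}}{j}\zeta(js)\,t^j\right),
\]
which is equivalent to \eqref{zeta_id}: taking the logarithmic derivative in $t$ and comparing the coefficient of $t^{r-1}$ reproduces $r\zeta_r(s)=\sum_{j=1}^r(-1)^{j-1}\zeta_{r-j}(s)\zeta(js)$. I read it as an identity of formal power series in $t$ whose coefficients are meromorphic in $s$, valid after meromorphic continuation.

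Next I would localize at $s=1/k$. Among the factors $\zeta(js)$ only $\zeta(ks)$ is singular there, with $\zeta(ks)=u^{-1}+(\text{holomorphic in }u)$ for $u=ks-1$. Peeling off the $j=k$ summand and absorbing everything regular into a single factor, the generating function becomes
\[
\sum_{r=0}^\infty \zeta_r(s)\,t^r=\exp\!\left(\frac{\alpha t^k}{u}\right)\cdot\Phi(s,t),\qquad \Phi(s,t)=\sum_{n=0}^\infty \phi_n(s)\,t^n,
\]
where $\Phi$ collects the $j\ne k$ terms together with the holomorphic part of the $j=k$ term, so that every $\phi_n(s)$ is holomorphic at $s=1/k$. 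Expanding $\exp(\alpha t^k/u)=\sum_{m\ge0}\alpha^m t^{km}/(m!\,u^m)$ and reading off the coefficient of $t^r$ gives the finite Laurent expansion
\[
\zeta_r(s)=\sum_{m=0}^{q}\frac{\alpha^m}{m!}\,\phi_{r-km}(s)\,u^{-m}.
\]
This reconfirms Theorem \ref{Th-asymp}(i) and, from the most singular term $m=q$, yields $C_r(k)=\dfrac{\alpha^q}{q!}\,\phi_\ell(1/k)$ with $\ell=r-kq$.

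It then remains to identify $\phi_\ell(1/k)$. Comparing coefficients of $t^\ell$ in the two expressions for the generating function and using that, in degrees below $k$, the factor $\exp(\alpha t^k/u)$ contributes only its constant term $1$, I obtain the identity of meromorphic functions $\phi_\ell(s)=\zeta_\ell(s)$ for all $\ell\le k-1$. Since $\zeta_\ell$ is holomorphic at $s=1/k$ --- by Theorem \ref{Th-asymp}(i) its poles lie at $1,1/2,\dots,1/\ell$, none equal to $1/k$ because $k>\ell$ --- this gives $\phi_\ell(1/k)=\zeta_\ell(1/k)$, a genuine finite value. Substituting $\alpha^q=(-1)^{(k-1)q}k^{-q}$ then produces the unified formula, hence all of \eqref{Th_formula2}--\eqref{Th_formula6}. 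Nonvanishing is automatic: for $\ell\ge1$ we have $1/k\in[0,1/\ell)$, so $\zeta_\ell(1/k)\ne0$ by Theorem \ref{Th<1/r}, while for $\ell=0$ one has $\zeta_0=1$.

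The main obstacle is not any single calculation but the bookkeeping that makes the factorization legitimate: one must justify the exponential identity and all subsequent manipulations at the level of formal power series in $t$ with meromorphic coefficients, and in particular verify that the regular factor $\Phi$ is coefficientwise holomorphic at $s=1/k$, so that the displayed Laurent expansion genuinely has exactly $q=[r/k]$ negative powers of $u$ and no hidden cancellation inflates or deflates the pole order. If one prefers to remain within the recursive framework already in place, the same formulas can instead be proved by induction on $r$ directly from \eqref{C_r_k_def}, inserting the conjectured values of $C_{r-j}(k)$; there the difficulty migrates to controlling the index set of $\sum^*$ and recognizing the resulting combination as the Newton-type expansion of $\zeta_\ell(1/k)$, which is precisely the step the generating function performs for free.
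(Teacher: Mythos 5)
Your proposal is correct, but it takes a genuinely different route from the paper. The paper proves the five formulas case by case, by induction directly on the recursion \eqref{C_r_k_def} established in the proof of Theorem \ref{Th-asymp}: \eqref{Th_formula3} by induction on $r-k$, \eqref{Th_formula4} from the trivial recursion $C_r(1)=r^{-1}C_{r-1}(1)$, and \eqref{Th_formula5}--\eqref{Th_formula6} by a double induction on $r$ and $\ell$; at each step the crucial move is to evaluate Newton's identity \eqref{zeta_id} at $s=1/k$ so that the sum $\sum_j(-1)^{j-1}\zeta_{\ell-j}(1/k)\zeta(j/k)$ collapses to $\ell\,\zeta_\ell(1/k)$. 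Your generating-function argument performs exactly that collapse once and for all: writing $\sum_r\zeta_r(s)t^r=\exp\bigl(\alpha t^k/u\bigr)\Phi(s,t)$ with $u=ks-1$ and $\Phi$ coefficientwise holomorphic at $s=1/k$, and then identifying $\phi_\ell=\zeta_\ell$ in degrees $\ell<k$, is the structural counterpart of the paper's repeated use of \eqref{zeta_id}; your bookkeeping is sound, since each $\phi_n$ is a polynomial in $\zeta(js)$ ($j\neq k$) and in the entire function $\zeta(ks)-1/(ks-1)$, and the nonvanishing $\zeta_\ell(1/k)\neq 0$ needed to pin the pole order comes from Theorem \ref{Th<1/r}, just as in the paper. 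What your route buys is considerable: a single unified formula $C_r(k)=\frac{(-1)^{(k-1)q}}{k^q q!}\zeta_\ell(1/k)$ (with $r=kq+\ell$) subsuming all five statements, the full principal part of the Laurent expansion rather than only the leading term, and an independent re-derivation of Theorem \ref{Th-asymp}(i)(ii) including the sign of $C_r(k)$ (note $(-1)^{(k-1)q+\ell}=(-1)^{r+[r/k]}$). What the paper's route buys is that it stays entirely inside the recursive framework already set up, requiring no formal-power-series apparatus, at the cost of several nested inductions and careful control of the index set of the sum $\sum^*$ in \eqref{C_r_k_def} --- precisely the bookkeeping you observe the generating function does for free.
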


\begin{remark}
For any fixed $k\geq 2$, formulas \eqref{Th_formula5} and \eqref{Th_formula6} show
that $C_r(k)$, as a function in $r$, has a kind of ``periodicity'' mod $k$.
\end{remark}

\begin{remark}
Formulas \eqref{Th_formula4}, \eqref{Th_formula5} and \eqref{Th_formula6} exhausts
all the cases of $C_r(k)$, $k,r\in\mathbb{N}$, $1\leq k\leq r$.     However we still
prefer to include \eqref{Th_formula2} and \eqref{Th_formula3} in the statement of
Theorem \ref{Th-coeff}, because they themselves are elegant formulas, and they are
necessary in the proof of \eqref{Th_formula5} and \eqref{Th_formula6}.
\end{remark}

\begin{proof}[Proof of Theorem \ref{Th-coeff}]
The first formula \eqref{Th_formula2} was already shown in the proof of Theorem
\ref{Th-asymp}. To prove the remaining formulas, we use \eqref{C_r_k_def}.

Let $r/2< k\leq r-1$.   Then $D_{r-k}(k)=\zeta_{r-k}(1/k)$, hence 
\eqref{C_r_k_def} is
\begin{align}\label{C_r_k_def2}
C_r(k)
=\frac{1}{r}\left\{{\sum_j}^* (-1)^{j-1}\zeta\left(\frac{j}{k}\right)C_{r-j}(k)
+(-1)^{k-1}\zeta_{r-k}\left(\frac{1}{k}\right)\right\},
\end{align}
where the summation $\sum^*$ runs over $1\leq j\leq r-k$.
First consider the case $k=r-1$.   Then
\begin{align}
C_r(r-1)=\frac{1}{r}\left\{\zeta\left(\frac{1}{r-1}\right)C_{r-1}(r-1)
+(-1)^{r-2}\zeta\left(\frac{1}{r-1}\right)\right\}.
\end{align}
Since $C_{r-1}(r-1)=(-1)^{r-2} (r-1)^{-1}$ by \eqref{Th_formula2}, the above is equal to
$$
\frac{(-1)^{r-2}}{r}\cdot \left(\frac{1}{r-1}+1\right)\zeta\left(\frac{1}{r-1}\right)
=\frac{(-1)^{r-2}}{r-1}\zeta\left(\frac{1}{r-1}\right),
$$
which is \eqref{Th_formula3} for $k=r-1$.

Now we show \eqref{Th_formula3} by induction on $r-k$. 
We may apply induction assumption to $C_{r-j}(k)$ on the right-hand side of
\eqref{C_r_k_def2} for $1\leq j\leq r-k-1$, because $(r-j)/2< k\leq (r-j)-1$ for
those $j$ and $(r-j)-k < r-k$.
Hence
we apply \eqref{Th_formula3} (for $1\leq j\leq r-k-1$) and \eqref{Th_formula2}
(for $j=r-k$) to the right-hand side of \eqref{C_r_k_def2} to get
\begin{align}
C_r(k)&=\frac{1}{r}\left\{\sum_{j=1}^{r-k-1}(-1)^{j-1}\zeta\left(\frac{j}{k}\right)
\frac{(-1)^{k+1}}{k}\zeta_{r-k-j}\left(\frac{1}{k}\right)
+(-1)^{r-k-1}\zeta\left(\frac{r-k}{k}\right)(-1)^{k-1}\frac{1}{k}\right\}\\
&\;+\frac{(-1)^{k-1}}{r}\zeta_{r-k}\left(\frac{1}{k}\right)\notag\\
&=\frac{(-1)^{k+1}}{rk}\sum_{j=1}^{r-k}(-1)^{j-1}
\zeta_{r-k-j}\left(\frac{1}{k}\right)\zeta\left(\frac{j}{k}\right)
+\frac{(-1)^{k-1}}{r}\zeta_{r-k}\left(\frac{1}{k}\right).\notag
\end{align}
Since the sum on the right-hand side is equal to $(r-k) \zeta_{r-k}(1/k)$ by \eqref{zeta_id}, 
we obtain
$$
C_r(k)=(-1)^{k+1}\left(\frac{r-k}{rk}+\frac{1}{r}\right)
\zeta_{r-k}\left(\frac{1}{k}\right)
=\frac{(-1)^{k+1}}{k}\zeta_{r-k}\left(\frac{1}{k}\right),
$$
which is \eqref{Th_formula3}.

Next consider $C_r(1)$.    The case $r=1$ is included in \eqref{Th_formula2}.
Assume $r\geq 2$.    Then \eqref{C_r_k_def} implies $C_r(1)=r^{-1}C_{r-1}(1)$, from
which \eqref{Th_formula4} immediately follows.

Lastly we prove \eqref{Th_formula5} and \eqref{Th_formula6}.
First we notice that the case $r=1$ of \eqref{Th_formula5} and \eqref{Th_formula6}
is included in \eqref{Th_formula2} and \eqref{Th_formula3}, respectively.
Therefore now assume $r\geq 2$, and prove the formulas by induction on $r$.

From \eqref{C_r_k_def} we have
\begin{align}\label{C_r_k_def3}
C_{kr+\ell}(k)= \frac{1}{kr+\ell}\left\{{\sum_j}^* (-1)^{j-1}\zeta\left(\frac{j}{k}\right)
C_{kr+\ell-j}(k)
+(-1)^{k-1}C_{kr+\ell-k}(k)\right\},
\end{align}
because $k\leq (kr+\ell)/2$.    The summation $\sum^*$ runs over all $1\leq j\leq \ell$
(because $[(kr+\ell-j)/k]=[(kr+\ell)/k]=r$ should be satisfied).
In particular, when $\ell=0$ this sum is empty, hence
\begin{align}
C_{kr}(k)=\frac{(-1)^{k-1}}{kr}C_{k(r-1)}(k).
\end{align}
Therefore recursively we obtain
$$
C_{kr}(k)=\frac{(-1)^{k-1}}{kr}\cdot \frac{(-1)^{k-1}}{k(r-1)}\cdots
\frac{(-1)^{k-1}}{2k}C_k(k)
=\frac{(-1)^{(k-1)(r-1)}}{k^{r-1}r!}\cdot \frac{(-1)^{k-1}}{k}
=\frac{(-1)^{(k-1)r}}{k^r\cdot r!},
$$
which is \eqref{Th_formula5}.

Assume $\ell\geq 1$.    Here we also adopt the induction on $\ell$.   Since 
$\ell-j<\ell$, we use the induction (on $\ell$) assumption to $C_{kr+\ell-j}(k)$ and
the induction (on $r$) assumption to $C_{kr+\ell-k}(k)$ on the right-hand side of
\eqref{C_r_k_def3}.    Then
\begin{align}
C_{kr+\ell}(k)=\frac{1}{kr+\ell}\left\{\sum_{j=1}^{\ell}(-1)^{j-1}\zeta\left(\frac{j}{k}\right)
\frac{(-1)^{(k-1)r}}{k^r\cdot r!}\zeta_{\ell-j}\left(\frac{1}{k}\right)
+(-1)^{k-1}
\frac{(-1)^{(k-1)(r-1)}}{k^{r-1}(r-1)!}\zeta_{\ell}\left(\frac{1}{k}\right)\right\}.
\end{align}
The sum on the right-hand side is
$$
\frac{(-1)^{(k-1)r}}{k^r\cdot r!}\cdot \ell\zeta_{\ell}\left(\frac{1}{k}\right)
$$
by \eqref{zeta_id}, hence
\begin{align}
C_{kr+\ell}(k)=\frac{1}{kr+\ell}\cdot \frac{(-1)^{(k-1)r}}{k^{r-1}(r-1)!}
\left(\frac{\ell}{kr}+1\right)\zeta_{\ell}\left(\frac{1}{k}\right)
=\frac{(-1)^{(k-1)r}}{k^r\cdot r!}\zeta_{\ell}\left(\frac{1}{k}\right),
\end{align}
which is \eqref{Th_formula6}.
\end{proof}

Now we consider the distribution of real zeros of $\zeta_r(s)$.   By Theorem
\ref{Th<1/r} we know that $\zeta_r(s)\neq 0$ for $0\leq s<1/r$.    However,
the graphs for $\zeta_r(s)$ for $2\leq r\leq 6$ show the existence of zeros
in other intervals.    Denote by $I_r(k)$ the number of IAZs of $\zeta_r(s)$
in the interval $(1/k,1/(k-1))$.    From the graphs we may observe that all zeros
in the graphs seem simple, and
\begin{align*}
& I_2(2)=1,\\
& I_3(3)=1,I_3(2)=1,\\
& I_4(4)=1, I_4(3)=1, I_4(2)=2,\\
& I_5(5)=1, I_5(4)=1, I_5(3)=1, I_5(2)=2,\\
& I_6(6)=1, I_6(5)=1, I_6(4)=1, I_6(3)=2, I_6(2)=3.
\end{align*}
In the next subsection we will present the graphs of $\zeta_r(s)$, $7\leq r\leq 10$.
From those graphs we further observe:
\begin{align*}
& I_7(7)=1, I_7(6)=1, I_7(5)=1, I_7(4)=1, I_7(3)=2, I_7(2)=3,\\
& I_8(8)=1, I_8(7)=1, I_8(6)=1, I_8(5)=1, I_8(4)=2, I_8(3)=2, I_8(2)=4,\\
& I_9(9)=1, I_9(8)=1, I_9(7)=1, I_9(6)=1, I_9(5)=1, I_9(4)=2, I_9(3)=3, I_9(2)=4,\\
& I_{10}(10)=1, I_{10}(9)=1, I_{10}(8)=1, I_{10}(7)=1, I_{10}(6)=1, I_{10}(5)=2, 
I_{10}(4)=2, I_{10}(3)=3, I_{10}(2)=5.
\end{align*}

 Based on these data, here we propose the following conjecture.
 
\begin{conjecture}\label{conj-IAZ}
For any $r\geq 2$, all IAZs of $\zeta_r(s)$ are simple, and $I_r(k)=[r/k]$
$(2\leq k\leq r)$.
\end{conjecture}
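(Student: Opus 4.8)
The plan is to pin down $I_r(k)$ by combining the endpoint behaviour supplied by Theorem \ref{Th-asymp} with an inductive lower bound and a zero-counting upper bound. Write $a=[r/k]$ and $b=[r/(k-1)]$ (so $b=r$ when $k=2$). First I would record the two boundary signs of $\zeta_r(s)$ on the open interval $(1/k,1/(k-1))$. By Theorem \ref{Th-asymp}(ii), as $s\to 1/k^+$ we have $ks-1\to 0^+$ and $\zeta_r(s)\sim C_r(k)(ks-1)^{-a}$ with $\mathrm{sgn}\,C_r(k)=(-1)^{r+a}$, so $\zeta_r(s)\to\pm\infty$ with sign $(-1)^{r+a}$. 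As $s\to (1/(k-1))^-$ we have $(k-1)s-1\to 0^-$ and $\zeta_r(s)\sim C_r(k-1)((k-1)s-1)^{-b}$; since $\mathrm{sgn}\,C_r(k-1)=(-1)^{r+b}$ and $((k-1)s-1)^{-b}$ carries the sign $(-1)^b$, the product sign is $(-1)^{r+2b}=(-1)^r$, independent of $k$. Hence the two ends carry signs $(-1)^{r+a}$ and $(-1)^r$, whose ratio is $(-1)^a$. Thus $\zeta_r$ changes sign an odd number of times across the interval exactly when $a$ is odd; in particular $I_r(k)$ has the same parity as $[r/k]$ (granting simplicity), and $I_r(k)\ge 1$ when $[r/k]$ is odd. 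This already establishes the parity prediction of Conjecture \ref{conj-IAZ}.

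The next step is the lower bound $I_r(k)\ge [r/k]$, which I would attempt by induction with period $k$ in the variable $r$. The mechanism to exploit is that as $r$ increases by $k$ the pole order of $\zeta_r$ at $s=1/k$ increases by one (Theorem \ref{Th-asymp}(i)), and the leading coefficient is known explicitly by \eqref{Th_formula5}--\eqref{Th_formula6}. Concretely, I would expand $\zeta_r$ in its (finite) principal part at $1/k$, writing $\zeta_r(s)=\sum_{m=1}^{a}c_m(ks-1)^{-m}+h(s)$ with $c_a=C_r(k)$ and $h$ analytic at $1/k$, and compare $\zeta_r$ with $\zeta_{r-k}$ through \eqref{zeta_id2}. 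The extra factor that raises the pole order should, after controlling the signs of the subdominant coefficients via \eqref{C_r_k_def} and Theorem \ref{Th-coeff}, force one additional sign change of $\zeta_r$ on a short interval $(1/k,1/k+\delta)$ relative to $\zeta_{r-k}$. Iterating from the base cases $\ell=0,\dots,k-1$ of \eqref{Th_formula5}--\eqref{Th_formula6} would then yield $a$ sign changes, hence at least $a$ zeros.

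The main obstacle is the matching upper bound $I_r(k)\le [r/k]$ together with simplicity. Here the endpoint and inductive sign data say nothing by themselves, since $\zeta_r$ could in principle oscillate and create extra pairs of zeros inside the interval. To control this I would clear the two boundary singularities and study $G(s)=(ks-1)^{a}((k-1)s-1)^{b}\zeta_r(s)$, which is real-analytic and nonvanishing at the endpoints of the closed interval $[1/k,1/(k-1)]$, so that the zeros of $\zeta_r$ are exactly the interior zeros of $G$. One route to bounding these is the argument principle applied to a thin rectangle around $[1/k,1/(k-1)]$, estimating the variation of $\arg G$ from the polynomial-in-Riemann-zeta expression of $\zeta_r(s)$ that generalizes \eqref{3_riemann}--\eqref{4_riemann}, namely $\zeta_r(s)=\sum_{\lambda\vdash r}(-1)^{r-\ell(\lambda)}z_\lambda^{-1}\prod_i\zeta(\lambda_i s)$; an alternative is a real-variable argument showing that a suitable normalization of $G$ is convex or totally monotone in blocks, so that a Rolle-type count caps the number of sign changes at $a$. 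Establishing either estimate rigorously—ruling out spurious interior zeros uniformly in $r$ and $k$—is the crux of the conjecture and the reason it is stated as such. Once an upper bound of $[r/k]$ is secured it coincides with the lower bound, forcing all $[r/k]$ zeros to be simple and completing the proof.
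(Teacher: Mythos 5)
You should first be clear that the paper does not prove this statement: it is raised as Conjecture \ref{conj-IAZ}, and the authors say explicitly that they have found no rigorous proof (the only theorem in its vicinity is conditional on it). So there is no proof in the paper to compare with, and your proposal must stand on its own. Judged that way, it contains one correct partial result and two genuine gaps. The correct part is the endpoint computation: by Theorem \ref{Th-asymp}(ii) and Theorem \ref{Th-coeff}, the sign of $\zeta_r(s)$ just to the right of $s=1/k$ is $(-1)^{r+[r/k]}$, and just to the left of $s=1/(k-1)$ it is $(-1)^{r+2[r/(k-1)]}=(-1)^r$; hence the number of points in $(1/k,1/(k-1))$ at which $\zeta_r$ changes sign is congruent to $[r/k]$ modulo $2$, and in particular $I_r(k)\geq 1$ whenever $[r/k]$ is odd. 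That is a valid, and apparently new, consequence of the paper's theorems, but it is only a parity constraint.

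The first gap is the inductive lower bound $I_r(k)\geq [r/k]$. Its engine is the claim that raising the pole order at $s=1/k$ (in passing from $\zeta_{r-k}$ to $\zeta_r$) ``forces one additional sign change'' on $(1/k,1/k+\delta)$. It does not: on the real line a pole of order $a$ with a sign-definite leading coefficient produces a monotone blow-up of fixed sign on each side, so increasing $a$ by one changes only the endpoint sign (which is exactly the parity statement you already have) and creates no local oscillation whatsoever. Moreover, relating $\zeta_r$ to $\zeta_{r-k}$ through \eqref{zeta_id2} brings in all the other products $\zeta_{r-j}(s)\zeta(js)$ and the regular parts of the Laurent expansions, none of which you control in the interior of the interval; Theorem \ref{Th-coeff} pins down leading coefficients only, so the proposed induction has no mechanism to produce $[r/k]$ distinct zeros rather than merely the right parity. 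The second gap is the upper bound $I_r(k)\leq [r/k]$ together with simplicity, which you yourself concede is ``the crux.'' For the Rouch\'{e}/argument-principle route to work one needs a concrete comparison function with exactly $[r/k]$ zeros in a rectangle around the interval and a boundary estimate dominating the difference --- this is precisely how the paper proves Theorem \ref{Th-Toshiki} on $(-2n,-2(n-1))$, where $\zeta(rs)$ serves as that function --- but on $(1/k,1/(k-1))$ no single term in the expansion of $\zeta_r(s)$ in terms of values $\zeta(js)$ is dominant, and your proposal names no candidate. As it stands, then, the proposal proves the parity of the zero count predicted by the conjecture, but not the conjecture itself; this is consistent with the statement remaining open in the paper.
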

 
The above data give a strong evidence for this conjecture, but so far we have
not found any rigorous proof of the conjecture. 
 
Let $IAZ(r)$ be the total number of IAZs, that is, the number of all real zeros
of $\zeta_r(s)$ in the interval $(0,1)$.

\begin{theorem}
If Conjecture \ref{conj-IAZ} is true, then we have
\begin{align}
IAZ(r)=\sum_{k=2}^r \left[\frac{r}{k}\right]=r\log r-2(1-\gamma)r+O(r^{1/2}),
\end{align}
where $\gamma=0.577215\ldots$ is Euler's constant.
\end{theorem}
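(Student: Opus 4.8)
The plan is to dispatch the first equality immediately and then reduce the asymptotic to the classical Dirichlet divisor problem. Assuming Conjecture \ref{conj-IAZ}, every IAZ is simple and $I_r(k)=[r/k]$ for $2\le k\le r$, so the total count is obtained by summing:
\[
IAZ(r)=\sum_{k=2}^r I_r(k)=\sum_{k=2}^r\left[\frac{r}{k}\right].
\]
All the content therefore lies in the second equality. The key first move is to restore the $k=1$ term, writing
\[
\sum_{k=2}^r\left[\frac{r}{k}\right]=\sum_{k=1}^r\left[\frac{r}{k}\right]-\left[\frac{r}{1}\right]=D(r)-r,
\]
where I set $D(r):=\sum_{k=1}^r[r/k]$, and reduce to understanding $D(r)$.

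Next I would identify $D(r)$ with the summatory function of the divisor function. Since $[r/k]$ counts the integers $m\ge 1$ with $km\le r$, summing over $k$ counts all lattice points under the hyperbola:
\[
D(r)=\#\{(k,m)\in\mathbb{N}^2:km\le r\}=\sum_{n\le r}d(n),
\]
where $d(n)$ denotes the number of divisors of $n$. At this point the problem is exactly the Dirichlet divisor problem, and I would invoke (or reprove) Dirichlet's theorem
\[
\sum_{n\le r}d(n)=r\log r+(2\gamma-1)r+O(r^{1/2}).
\]
Combining with $\sum_{k=2}^r[r/k]=D(r)-r$ gives $r\log r+(2\gamma-2)r+O(r^{1/2})$, and since $2\gamma-2=-2(1-\gamma)$ this is precisely the claimed formula.

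The main obstacle is obtaining the sharp error term $O(r^{1/2})$, and here I would use the hyperbola method rather than a naive estimate. Writing $[r/k]=r/k+O(1)$ and summing the $O(1)$ terms over all $k\le r$ only yields an $O(r)$ error, which is too weak. Instead I would exploit the symmetry of the region $\{km\le r\}$ about the diagonal $k=m$, splitting at $\sqrt r$:
\[
D(r)=2\sum_{k\le \sqrt r}\left[\frac{r}{k}\right]-\left[\sqrt r\,\right]^2.
\]
Using the harmonic-sum asymptotic $\sum_{k\le \sqrt r}\tfrac1k=\tfrac12\log r+\gamma+O(r^{-1/2})$, the bound $\sum_{k\le\sqrt r}O(1)=O(\sqrt r)$, and $[\sqrt r\,]^2=r+O(\sqrt r)$, one finds
\[
D(r)=2r\left(\tfrac12\log r+\gamma\right)-r+O(r^{1/2})=r\log r+(2\gamma-1)r+O(r^{1/2}),
\]
which is the estimate needed above. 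The remaining work is purely bookkeeping of error terms; the only real idea is the hyperbola splitting, which is what converts the unacceptable $O(r)$ error into the stated $O(r^{1/2})$.
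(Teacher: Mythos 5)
Your proof is correct and takes essentially the same route as the paper: both reduce the sum via $\sum_{k=2}^r[r/k]=\sum_{n\le r}d(n)-r$ and then apply the Dirichlet divisor asymptotic $\sum_{n\le r}d(n)=r\log r+(2\gamma-1)r+O(r^{1/2})$. The only difference is that the paper simply cites this classical estimate (Apostol, Theorem 3.3), whereas you reprove it by the hyperbola method; this is a correct, self-contained addition but not a different strategy.
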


\begin{proof}
The first equality is a direct consequence of the conjecture.    The second
equality follows from
\begin{align}\label{F-d}
\sum_{k=2}^r \left[\frac{r}{k}\right]=
\sum_{k=1}^r \left[\frac{r}{k}\right]-r
=\sum_{k=1}^r \sum_{\substack{\ell\leq r \\ \ell\equiv 0 ({\rm mod}\; k)}}1-r
=\sum_{\ell\leq r}d(\ell)-r,
\end{align}
where $d(\ell)$ denotes the number of positive divisors of $\ell$, and the known result
$$
\sum_{\ell=1}^r d(\ell)=r\log r+(2\gamma-1)r+O(r^{1/2})
$$
(\cite[Theorem 3.3]{Apos76}).
\end{proof}

\begin{remark}
The above error term $O(r^{1/2})$ is not best-possible.    It is conjectured that
the estimate $O(r^{1/4+\varepsilon})$ would hold for any $\varepsilon>0$, 
and the best known result is $O(r^{131/416+\varepsilon})$ due to
Huxley \cite{Hux03}.
\end{remark}

\begin{remark}
Write $F(r)=\sum_{k=2}^r [r/k]$, and let
$$
\Delta_{IAZ}(r)=IAZ(r)-IAZ(r-1), \qquad \Delta_F(r)=F(r)-F(r-1).
$$
If Conjecture \ref{conj-IAZ} is true, then $\Delta_{IAZ}(r)=\Delta_F(r)$, so it is
interesting to observe the behavior of $\Delta_F(r)$.
From \eqref{F-d} we see that
$\Delta_F(r)=d(r)-1$.    In particular, 
$\Delta_F(p)=1$ for any prime number $p$.
Further, $\Delta_F(r)$ is dominantly odd; it is even if and only if $r$ is a
square (because if the decomposition of $r$ into prime factors is
$r=p_1^{a_1}p_2^{a_2}\cdots p_k^{a_k}$, then $d(r)=(a_1+1)(a_2+1)\cdots (a_r+1)$).
\end{remark}

\subsection{Further examples}

Here we present the graphs of $\zeta_r(s)$, $7\leq r\leq 10$, $s\in [0,1]$.
We find that the behavior of those graphs agrees with our Conjecture \ref{conj-IAZ}.
 
 The seven-fold zeta-function $\zeta_7(s)$ has seven asymptotes: $\Re s=1, 1/2, 1/3, 1/4, 1/5, 1/6$ and $1/7$, and has nine IAZs for \( s\in [0,1]\) (Fig \ref{Fig9}):

\begin{itemize}[noitemsep]
    \item One IAZ $\in (1/7,1/6)$: \(\zeta_7(0,152170...)\approx 0.\)
    
    \item One IAZ $\in (1/6,1/5)$: \(\zeta_7(0,178811...)\approx 0. \)
    \item One IAZs $\in (1/5,1/4)$: \(\zeta_7(0,217987...) \approx0 \).
    \item One IAZs $\in (1/4,1/3)$: \(\zeta_7(0,298653...) \approx0 \).
    \item Two IAZs $\in (1/3,1/2)$: \(\zeta_7(0,365596..)\approx 0 \) and \(\zeta_7(0,442820...)\approx 0 \).
    
    \item Three IAZs $\in (1/2,1)$: \(\zeta_7(0,605776...)\approx 0 \), \(\zeta_7(0,736271...)\approx 0 \) and
\(\zeta_7(0,868958...)\approx 0.\) 
\end{itemize} 

Also \(\zeta_7(s)\) has one minimum and two maximums for \( s\in [0,1]\):

\begin{itemize}[noitemsep]
    \item One maximum between vertical asymptotes $1/3$ and $1/2$: \(\zeta_7(0,412055...)\approx  +4,875899...\)
    \item One minimum \(\zeta_7(0,663498...)\approx  -1,927124...\) and one maximum \(\zeta_7(0,847083...)\approx  +21,72816...\)
between vertical asymptotes $1/2$ and 1.
\end{itemize}

\begin{figure}[H]
\centering
\includegraphics{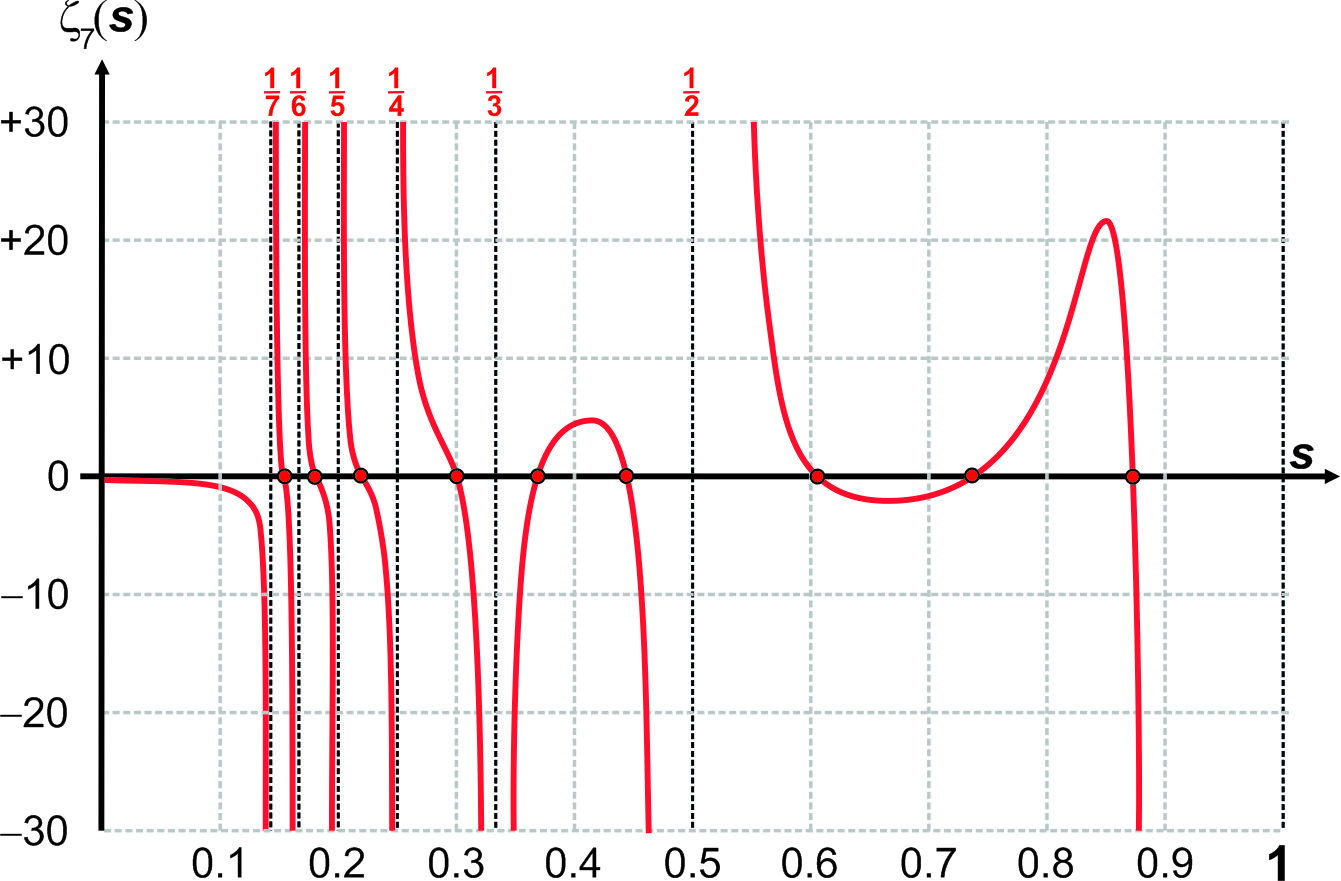}
\caption{The seven-fold zeta-function for $s\in [0,1]$}
\label{Fig9}
\end{figure}

The eight-fold zeta-function $\zeta_8(s)$ has eight asymptotes: $\Re s=1, 1/2, 1/3, 1/4, 1/5, 1/6, 1/7$ and $1/8$, and has twelve IAZs in \( s\in [0,1]\) (Fig \ref{Fig10}):

\begin{itemize}[noitemsep]
    \item One IAZ $\in (1/8,1/7)$: \(\zeta_8(0,132134...)\approx 0\)
    
    \item One IAZ $\in (1/7,1/6)$: \(\zeta_8(0,151738...)\approx 0.\)
    \item One IAZs $\in (1/6,1/5)$: \(\zeta_8(0,178822...) \approx0 \).
    \item One IAZs $\in (1/5,1/4)$: \(\zeta_8(0,218978...) \approx0 \).
    \item Two IAZs $\in (1/4,1/3)$: \(\zeta_8(0,266060...)\approx 0 \) and \(\zeta_8(0,295787..)\approx 0 \).
    \item Two IAZs $\in (1/3,1/2)$: \(\zeta_8(0,392752...)\approx 0 \) and \(\zeta_8(0,437321...)\approx 0 \).
    
    \item Four IAZs $\in (1/2,1)$: \(\zeta_8(0,538144...)\approx 0 \), \(\zeta_8(0,650658...)\approx 0 \) ,
\(\zeta_8(0,766794...)\approx 0 \) and \(\zeta_8(0,883665...)\approx 0 \).
\end{itemize} 

Also \(\zeta_8(s)\) has four minimums and one maximum for  \( s\in [0,1]\) :

\begin{itemize}[noitemsep]
    \item One minimun between vertical asymptotes $1/4$ and $1/3$: \(\zeta_8(0,277976...)\approx -2,253261...,\)
    \item One minimum between  vertical asymptotes $1/3$ and $1/2$: \(\zeta_8(0,420354...)\approx  -2,635752...,\) 
    
    \item Two minimus and one maximum between vertical asymptotes $1/2$ and 1: 
the first minimum at \( \zeta_8(0,551113...)\approx -12,188697...\), the maximum at \(\zeta_8(0,719417...)\approx  +1,334459...\), and the second minimum at \(\zeta_8(0,867892...)\approx  -45,821285....\)
\end{itemize}

\begin{figure}[H]
\centering
\includegraphics{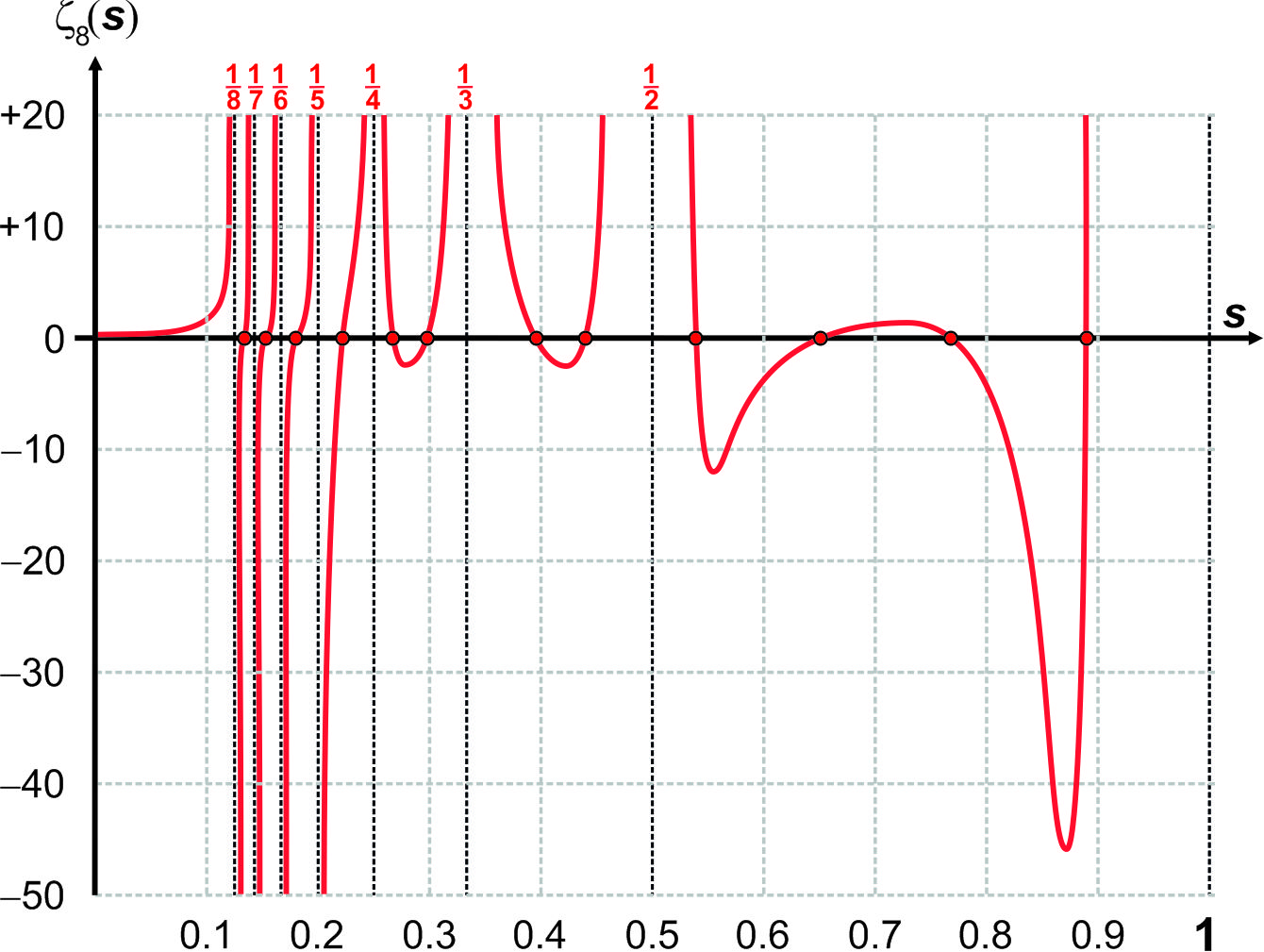}
\caption{The eight-fold zeta-function for $s\in [0,1]$}
\label{Fig10}
\end{figure}

Figure \ref{Fig11} shows the behavior of the nine-fold zeta-function, which first produces three
IAZs on the inter asymptotic interval \( s\in [1/3, 1/2]\).


\begin{figure}[H]
\centering
\includegraphics{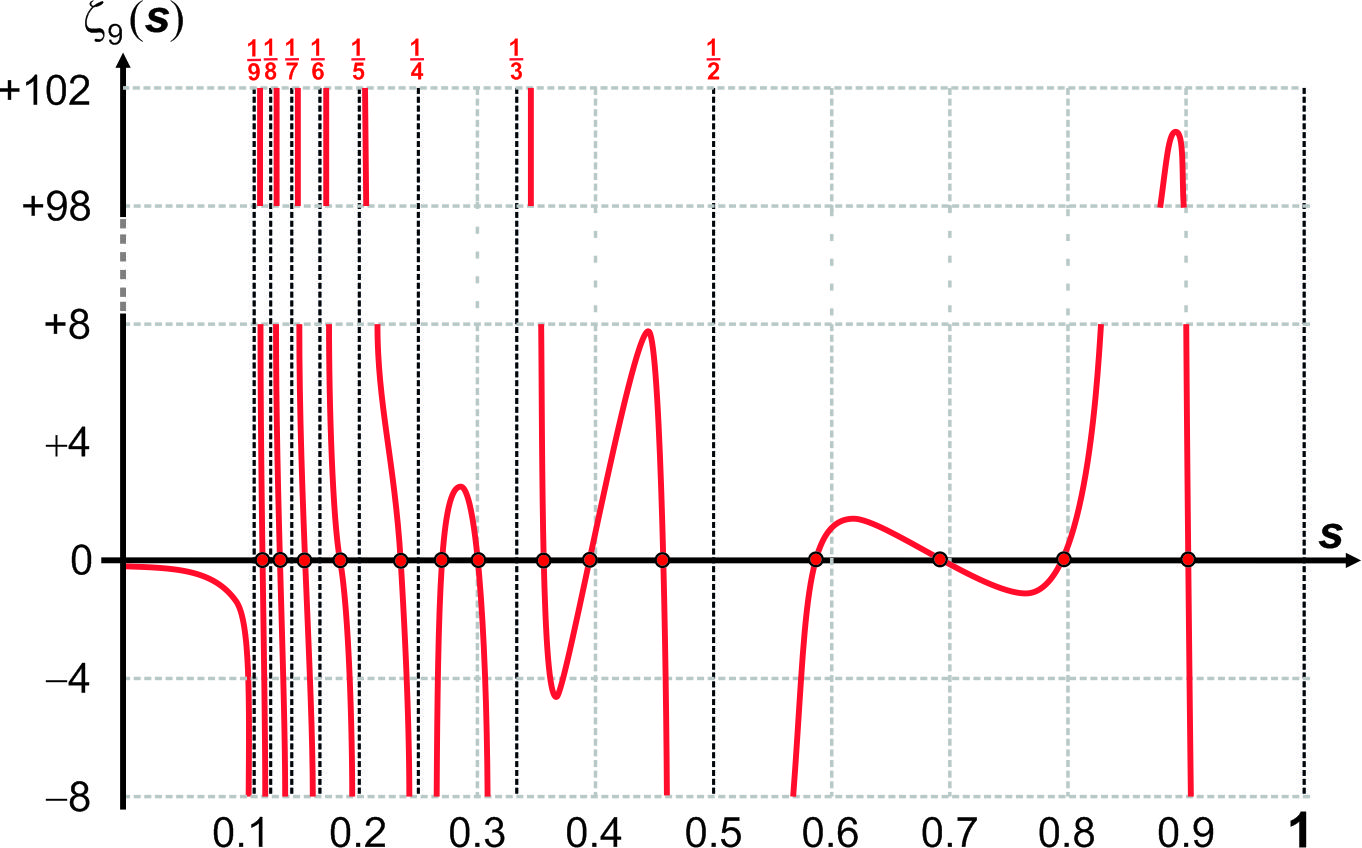}
\caption{The nine-fold zeta-function for $s\in [0,1]$}
\label{Fig11}
\end{figure}

Figure \ref{Fig12} shows the behavior of the ten-fold zeta-function, which first 
produces five IAZs at the inter-asymptotic interval \( s\in [1/2,1]\)  and
also, two IAZs at the inter asymptotic interval \( s\in [1/5,1/4]\).

%

\begin{figure}[H]
\centering
\includegraphics{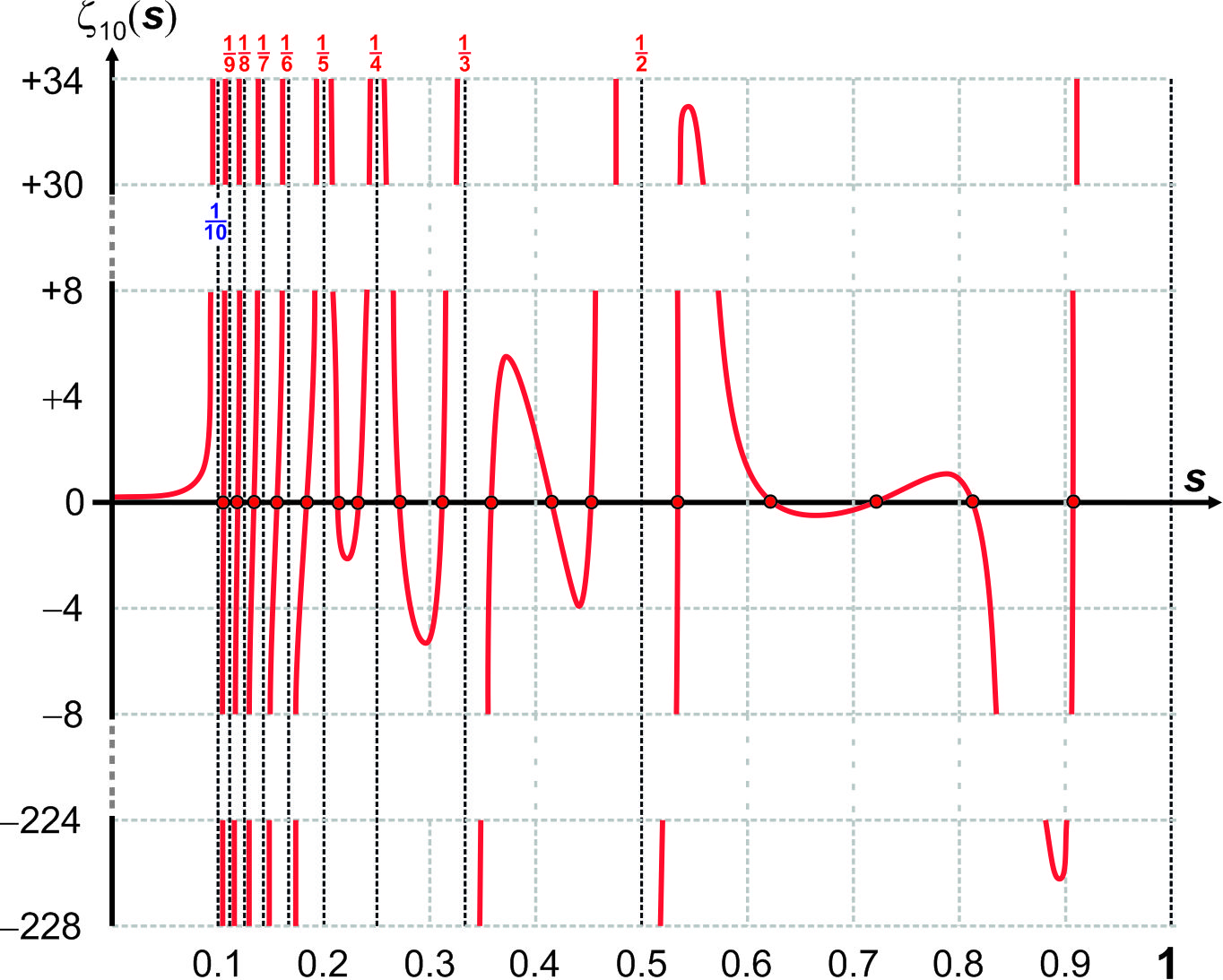}
\caption{The ten-fold zeta-function for $s\in [0,1]$}
\label{Fig12}
\end{figure}

\section{Multiple zeta-functions on the interval \(s \in (1,+\infty)\)}\label{sec-5}

In this section we discuss the behavior of $\zeta_r(s)$ for $s\in (1,+\infty)$.
From the definition it is obvious that $\zeta_r(s)$ is monotonically decreasing when $s$
increases in this interval, and
\begin{align*}
\lim_{s\to+\infty}\zeta_r(s)=\left\{
   \begin{array}{ll}
      1  &  (r=1)\\
      0  &  (r\geq 2).
  \end{array}\right.
  \end{align*}

%

We can refine this to the following asymptotic formula.

\begin{theorem}\label{Th-infty}
For any $r\geq 1$, 
we have $\zeta_r(s)\sim (r!)^{-s}$ as $s\to +\infty$.    In particular, \eqref{plus_inf}
holds.
\end{theorem}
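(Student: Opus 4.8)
The plan is to show $\zeta_r(s) \sim (r!)^{-s}$ by isolating the single dominant term in the defining series \eqref{EZ_def} with all arguments equal to $s$. When $s > 1$ the series converges absolutely, so I may write
\begin{align}
\zeta_r(s)=\sum_{1\leq m_1<m_2<\cdots<m_r}(m_1 m_2\cdots m_r)^{-s}.
\end{align}
The unique smallest value of the product $m_1 m_2\cdots m_r$ over all strictly increasing tuples is achieved by $(m_1,\ldots,m_r)=(1,2,\ldots,r)$, giving the term $(r!)^{-s}$. Every other admissible tuple has product strictly larger than $r!$, so the idea is that as $s\to+\infty$ this leading term dominates all the rest exponentially.

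First I would factor out $(r!)^{-s}$ and write
\begin{align}
\frac{\zeta_r(s)}{(r!)^{-s}}=1+\sum_{\substack{1\leq m_1<\cdots<m_r\\ (m_1,\ldots,m_r)\neq(1,\ldots,r)}}\left(\frac{m_1 m_2\cdots m_r}{r!}\right)^{-s}.
\end{align}
It then suffices to prove that the remaining sum tends to $0$ as $s\to+\infty$. For this I would let $q$ denote the second-smallest possible value of the product $m_1\cdots m_r$, which is attained by the tuple $(1,2,\ldots,r-1,r+1)$ and equals $(r+1)!/r = (r+1)\cdot(r-1)!$; in particular $q/r! = (r+1)/r > 1$. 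Bounding each term by comparing against a fixed reference exponent, say $s_0=2$, I would write for $s\geq s_0$
\begin{align}
\left(\frac{m_1\cdots m_r}{r!}\right)^{-s}=\left(\frac{m_1\cdots m_r}{r!}\right)^{-(s-s_0)}\left(\frac{m_1\cdots m_r}{r!}\right)^{-s_0}\leq \left(\frac{q}{r!}\right)^{-(s-s_0)}\left(\frac{m_1\cdots m_r}{r!}\right)^{-s_0},
\end{align}
using that $m_1\cdots m_r\geq q$ for every tuple in the sum. Summing over all such tuples gives the bound $(q/r!)^{-(s-s_0)}\cdot(r!)^{s_0}\,\zeta_r(s_0)$, and since $\zeta_r(s_0)<\infty$ and $q/r!>1$, this geometric factor forces the tail to $0$ as $s\to+\infty$.

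This establishes $\zeta_r(s)/(r!)^{-s}\to 1$, i.e. $\zeta_r(s)\sim(r!)^{-s}$, for every $r\geq 1$; the case $r=1$ is just $\zeta(s)\sim 1$, consistent with $1!=1$. Finally, for the ``in particular'' clause about \eqref{plus_inf}, I would note that the asymptotic immediately gives
\begin{align}
\frac{\zeta_r(k+1)}{\zeta_r(k)}\sim\frac{(r!)^{-(k+1)}}{(r!)^{-k}}=\frac{1}{r!}\qquad(k\to\infty),
\end{align}
which is exactly \eqref{plus_inf}. I do not expect any genuine obstacle here; the only point requiring a little care is justifying the term-by-term domination uniformly in $s$, which the fixed-exponent factorization above handles cleanly by reducing everything to convergence of $\zeta_r(s_0)$ at a single value $s_0>1$.
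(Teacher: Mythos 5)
Your proof is correct, but it takes a genuinely different route from the paper's. Both arguments start from the same trivial lower bound (the single tuple $(1,2,\ldots,r)$ gives $\zeta_r(s)\geq (r!)^{-s}$), but they differ in how the remaining terms are controlled. You isolate the gap between the smallest product $r!$ and the second-smallest product $q=(r+1)!/r$, and kill the tail by writing each term as a fixed-exponent factor $(m_1\cdots m_r/r!)^{-s_0}$ times a geometric factor $(q/r!)^{-(s-s_0)}$, reducing everything to convergence of $\zeta_r(s_0)$ at one point $s_0=2$; the only detail you leave implicit is the easy verification that every non-minimal tuple has $m_r\geq r+1$ and hence product at least $(r+1)(r-1)!$ (if $m_r=r$ then strict monotonicity forces $m_i=i$ for all $i$), which is routine. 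The paper instead peels off the smallest summation variable repeatedly using the integral comparison $\sum_{m\geq k}m^{-s}\leq k^{-s}\bigl(1+\tfrac{k}{s-1}\bigr)$, arriving at the explicit product bound \eqref{ineq_inf}
\begin{align*}
\zeta_r(s)\leq \frac{1}{(r!)^s}\prod_{k=2}^{r}\left(1+\left(\frac{k-1}{k}\right)^s\left(1+\frac{k}{s-1}\right)\right)\left(1+\frac{r}{s-1}\right),
\end{align*}
each factor of which tends to $1$ as $s\to+\infty$. What the paper's approach buys is precisely this explicit inequality: it is quantitative enough in its dependence on $r$ that it is reused immediately afterwards to prove Theorem \ref{Th-r-infty} (that $\zeta_r(s)\to 0$ as $r\to\infty$ for fixed $s>1$), whereas your bound carries the constant $(r!)^{s_0}\zeta_r(s_0)$, which is not obviously controlled as $r$ grows. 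What your approach buys is brevity and transparency: no integral comparison or iteration is needed, and the mechanism (spectral gap between the two smallest products plus absolute convergence at a single point) is the standard "leading coefficient of a Dirichlet series" argument. Your deduction of \eqref{plus_inf} from the asymptotic is the same as the paper's.
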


\begin{proof}
Let $s>1$.    We first notice the inequality
\begin{align}\label{partialzeta}
\sum_{m=k}^{\infty}\frac{1}{m^s}=\frac{1}{k^s}+\sum_{m=k+1}^{\infty}\frac{1}{m^s}
\leq \frac{1}{k^s}+\int_{k}^{\infty}\frac{dt}{t^s}
= \frac{1}{k^s}\left(1+\frac{k}{s-1}\right).
\end{align}
Therefore
\begin{align*}
\zeta_r(s)&=\sum_{2\leq m_2<\cdots<m_r}\frac{1}{(m_2\cdots m_r)^s}
+\sum_{m_1\geq 2}\frac{1}{m_1^s}\sum_{(m_1<)m_2<\cdots< m_r}
\frac{1}{(m_2\cdots m_r)^s}\\
&\leq \left(1+\sum_{m_1\geq 2}\frac{1}{m_1^s}\right)
\sum_{2\leq m_2<\cdots<m_r}\frac{1}{(m_2\cdots m_r)^s}
\leq \left(1+\frac{1}{2^s}\left(1+\frac{2}{s-1}\right)\right)
\sum_{2\leq m_2<\cdots<m_r}\frac{1}{(m_2\cdots m_r)^s}.
\end{align*}
Using \eqref{partialzeta} again, we see that the last sum is
\begin{align*}
&=\frac{1}{2^s}\sum_{3\leq m_3<\cdots<m_r}\frac{1}{(m_3\cdots m_r)^s}+
\sum_{m_2\geq 3}\frac{1}{m_2^s}\sum_{(m_2<)m_3<\cdots< m_r}\frac{1}{(m_3\cdots m_r)^s}\\
&\leq \left(\frac{1}{2^s}+\frac{1}{3^s}\left(1+\frac{3}{s-1}\right)\right)
\sum_{3\leq m_3<\cdots<m_r}\frac{1}{(m_3\cdots m_r)^s}.
\end{align*}
Repeating this procedure, we arrive at
\begin{align}\label{ineq_inf}
\zeta_r(s)&\leq \left(1+\frac{1}{2^s}\left(1+\frac{2}{s-1}\right)\right)
\left(\frac{1}{2^s}+\frac{1}{3^s}\left(1+\frac{3}{s-1}\right)\right)\\
&\quad\times\cdots\times \left(\frac{1}{(r-1)^s}+\frac{1}{r^s}\left(1+\frac{r}{s-1}\right)\right)
\times\frac{1}{r^s}\left(1+\frac{r}{s-1}\right)\notag\\
&=\frac{1}{(r!)^s}\left(1+\frac{1}{2^s}\left(1+\frac{2}{s-1}\right)\right)
\left(1+\left(\frac{2}{3}\right)^s\left(1+\frac{3}{s-1}\right)\right)\notag\\
&\quad\times\cdots\times \left(1+\left(\frac{r-1}{r}\right)^s\left(1+\frac{r}{s-1}\right)\right)
\times\left(1+\frac{r}{s-1}\right).\notag
\end{align}
When $s\to +\infty$, 
$$
\left(\frac{k-1}{k}\right)^s \left(1+\frac{k}{s-1}\right)\to 0 \quad (2\leq k\leq r),
$$
hence $\zeta_r(s)\leq (r!)^{-s}(1+o(1))$ as $s\to +\infty$.
On the other hand it is obvious that $\zeta_r(s)\geq (r!)^{-s}$.
Therefore the assertion of the theorem follows.
\end{proof}

\begin{remark}
When $r=2$, Theorem \ref{Th-infty} is included in the proof of
\cite[Proposition 2.1]{MatSho14}.
\end{remark}

From \eqref{ineq_inf}, we can also prove:

\begin{theorem}\label{Th-r-infty}
For any fixed $s>1$, we have $\lim_{r\to\infty}\zeta_r(s)=0$.
\end{theorem}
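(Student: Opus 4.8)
The plan is to reuse the explicit upper bound \eqref{ineq_inf} established in the proof of Theorem \ref{Th-infty}, but now to read it as a statement about the dependence on $r$ for a \emph{fixed} $s>1$. Writing that bound compactly,
\begin{align*}
\zeta_r(s)\le \frac{1}{(r!)^s}\left(1+\frac{r}{s-1}\right)\prod_{k=2}^{r}\left(1+\left(\frac{k-1}{k}\right)^s\left(1+\frac{k}{s-1}\right)\right),
\end{align*}
the entire problem reduces to showing that the product and the polynomial prefactor cannot outrun the super-exponential denominator $(r!)^s$. So no new analytic input is needed; everything follows from growth comparison.

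First I would bound each factor of the product crudely. Since $0<((k-1)/k)^s\le 1$, the $k$-th factor is at most $2+k/(s-1)$, and for $k\ge 1$ this is at most $Ak$ where $A=2+1/(s-1)$ depends only on $s$. Hence
\begin{align*}
\prod_{k=2}^{r}\left(1+\left(\frac{k-1}{k}\right)^s\left(1+\frac{k}{s-1}\right)\right)\le \prod_{k=2}^r Ak=A^{r-1}\,r!,
\end{align*}
using $\prod_{k=2}^r k=r!$. The prefactor likewise satisfies $1+r/(s-1)\le Ar$ for $r\ge 1$.

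Combining these estimates yields
\begin{align*}
\zeta_r(s)\le \frac{A^r\,r\,r!}{(r!)^s}=\frac{A^r\,r}{(r!)^{s-1}},
\end{align*}
and the final step is to note that, because $s-1>0$ is fixed, the denominator beats the numerator: by Stirling, $\log\bigl((r!)^{s-1}\bigr)\sim (s-1)\,r\log r$, which dominates $\log(A^r r)=r\log A+\log r$, so the right-hand side tends to $0$ and hence $\zeta_r(s)\to 0$.

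The only point requiring care is that each factor of the product grows \emph{linearly} in $k$ rather than remaining bounded, so the naive ``product of terms near $1$'' intuition does not apply. The key observation that unlocks the argument is that this linear growth contributes exactly one extra factor of $r!$, which is then absorbed by the surplus power in $(r!)^{s-1}$ available precisely because $s>1$. Thus the main (and really the only) obstacle is organizing the bookkeeping so that the spare factorial power is visible; once the product is pinned down as $O(A^r\,r!)$, the conclusion is immediate.
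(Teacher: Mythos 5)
Your proof is correct and takes essentially the same route as the paper's: both start from \eqref{ineq_inf}, bound each factor of the product linearly in $k$ so the product is absorbed into a single power $C^r\, r!$, arrive at an estimate of the shape $C^r\, r/(r!)^{s-1}$, and conclude by Stirling (or factorial growth). The only cosmetic difference is that your constant $A=2+1/(s-1)$ works for every $s>1$ at once, whereas the paper first reduces to $s\in(1,2)$ by monotonicity in $s$ before applying its bound $2+k/(s-1)\le 2k/(s-1)$.
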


\begin{proof}
Since $\zeta_r(s)$ is monotonically decreasing with respect to $s$, it is enough to show
the theorem for $s\in(1,2)$.
Then
$$
1+\left(\frac{k-1}{k}\right)^s\left(1+\frac{k}{s-1}\right)
\leq 1+\left(1+\frac{k}{s-1}\right)=2+\frac{k}{s-1}\leq \frac{2k}{s-1} \quad (2\leq k\leq r)
$$
and $1+r/(s-1)\leq 2r/(s-1)$.   Therefore from \eqref{ineq_inf} we have
$$
\zeta_r(s)\leq \frac{1}{(r!)^s}\cdot \left(\frac{2}{s-1}\right)^r r!\cdot r
= \left(\frac{2}{s-1}\right)^r \frac{r}{(r!)^{s-1}}.
$$
Here we apply the classical formula of Stirling
\begin{align}\label{Stirling}
r!\sim (2\pi r)^{1/2} r^r e^{-r} \qquad (r\to +\infty)
\end{align}
to get
\begin{align*}
\zeta_r(s)\lesssim \frac{r^{1-(s-1)/2}}{(2\pi)^{(s-1)/2}}\left(\frac{2e^{s-1}}{s-1}\right)^r
\frac{1}{r^{(s-1)r}}
\end{align*}
which, for any fixed $s>1$, tends to $0$ as $r\to\infty$.
\end{proof}

The assertions of Theorem \ref{Th-infty} and Theorem \ref{Th-r-infty} can be illustrated in Figure \ref{Fig13}, where we
symbolically write $\zeta_{\infty}(s)$ as the limit $r\to\infty$.

\begin{figure}[h]
\centering
\includegraphics{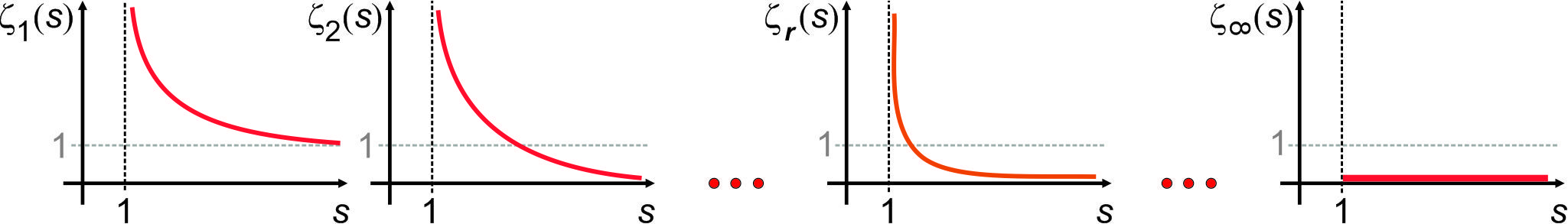}
\caption{Dynamics of the $r$-fold zeta-function \(\zeta_r(s)\) in the interval \(s \in (1,+\infty)\)   }
\label{Fig13}
\end{figure}

\section{Multiple zeta-functions on the interval \(s\in (-\infty,0]\)}\label{sec-6}

Now we consider the behavior of $\zeta_r(s)$ for $s\in(-\infty,0]$.
We already know (Theorem \ref{Th<1/r}) that the value $\zeta_r(0)$ is positive for
even $r$, and negative for odd $r$.
Here we list up the values of $\zeta_r(0)$ and $\zeta_r(-1)$ for $1\leq r\leq 14$ .

\begin{table}[hbt!]
\centering
\begin{tabular}{lcc|llc}
\multicolumn{1}{c}{$r$} & \(\zeta_r(0) \)             & \(\zeta_r(-1)\)        & \multicolumn{1}{c}{$r$} & \multicolumn{1}{c}{\(\zeta_r(0) \)  } & ~\(\zeta_r(-1)\)       \\ 
\hline
1                     &  $-$0,500000000000000 & $-$0,08333333333 & 8                     & +0,196380615234375        & $-$0,00005171790  \\
2                     & +0,375000000000000 & +0,00347222222 & 9                     & $-$0,185470581054687        & $-$0,00083949872  \\
3                     & $-$0,312500000000000 & +0,00268132716 & 10                    & +0,176197052001953        & +0,00007204895  \\
4                     & +0,273437500000000 & $-$0,00022947209 & 11                    & $-$0,168188095092773        & +0,00191443849  \\
5                     & $-$0,246093750000000 & $-$0,00078403922 & 12                    & +0,161180257797241        & $-$0,00016251626  \\
6                     & +0,225585937500000 & +0,00006972813 & 13                    & $-$0,154981017112732        & $-$0,00640336283  \\
7                     & $-$0,209472656250000 & +0,00059216643 & 14                    & +0,149445980787277        & +0,00054016476 
\end{tabular}
\end{table}

How about the zeros?    First of all, we note the following fact, which was
conjectured by \cite{AET01} and first proved by Kamano \cite{Kam06}.

\begin{prop}
For any $n\in\mathbb{N}$, we have $\zeta_r(-2n)=0$. 
\end{prop}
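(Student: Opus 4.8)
The plan is to read the result off directly from the fundamental recursion \eqref{zeta_id}, without invoking any delicate argument about the meromorphic continuation. Evaluating that identity at $s=-2n$ gives
\begin{align*}
r\,\zeta_r(-2n)=\sum_{j=1}^r (-1)^{j-1}\zeta_{r-j}(-2n)\,\zeta(-2nj),
\end{align*}
with the convention $\zeta_0=1$. My key observation is that \emph{every} factor $\zeta(-2nj)$ appearing on the right-hand side is a trivial zero of the Riemann zeta-function: for each $j$ with $1\le j\le r$ the product $nj$ is a positive integer, so $-2nj$ is a negative even integer, and hence $\zeta(-2nj)=0$.

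Before concluding I would check that each coefficient $\zeta_{r-j}(-2n)$ is genuinely finite, so that the vanishing of $\zeta(-2nj)$ is not being multiplied against a pole. This is immediate from Theorem \ref{Th-asymp}(i): the only poles of $\zeta_{r-j}(s)$ sit at the points $s=1/k$ with $1\le k\le r-j$, all of which lie in $(0,1]$, whereas $-2n<0$. Thus $\zeta_{r-j}(-2n)$ is a genuine finite real value for every $j$, and no term is of the indeterminate form $0\cdot\infty$.

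With finiteness in hand, each summand $(-1)^{j-1}\zeta_{r-j}(-2n)\,\zeta(-2nj)$ is a finite number times $0$, hence is $0$; the entire right-hand side vanishes, giving $r\,\zeta_r(-2n)=0$ and therefore $\zeta_r(-2n)=0$. I would note that this argument is uniform in $r$ and requires no induction, because the trivial zeros of $\zeta$ annihilate all $r$ terms simultaneously.

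There is essentially no hard step here: once the recursion is in place the whole argument is a one-line evaluation, and the only point requiring any care---ruling out the indeterminate form---is already settled by the pole structure recorded in Theorem \ref{Th-asymp}. The same conclusion could alternatively be extracted from an explicit expansion of $\zeta_r(s)$ purely in terms of $\zeta(js)$ (cf.\ \eqref{3_riemann}, \eqref{4_riemann}), since every monomial there carries at least one factor $\zeta(js)$ vanishing at $s=-2n$; but the recursive route is cleaner and avoids writing down those expansions, so that is the one I would present.
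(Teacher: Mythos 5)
Your proof is correct and is essentially identical to the paper's: both evaluate the recursion \eqref{zeta_id} at $s=-2n$, observe that every factor $\zeta(-2nj)$ vanishes because $-2nj$ is a trivial zero of $\zeta$, and invoke Theorem \ref{Th-asymp}(i) to confirm that the coefficients $\zeta_{r-j}(-2n)$ are finite (no poles on the negative real line), so every term on the right-hand side vanishes. The only difference is that you spell out the $0\cdot\infty$ concern more explicitly than the paper does, which is a matter of exposition rather than substance.
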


\begin{proof}
This is an immediate consequence of the fact that $\zeta(-2n)=0$ for any $n\in\mathbb{N}$.
In fact, since there is no pole of $\zeta_{r-j}(s)$ ($1\leq j\leq r$) on the negative
real line (Theorem \ref{Th-asymp} (i)), all terms on the right-hand side of
\eqref{zeta_id} are zero at $s=-2n$.
\end{proof}

Numerical computations show that there are more zeros on the negative real line, which we
call inter-trivial zeros (ITZs).
This phenomenon was first noticed by the first-named author and Sh{\=o}ji 
\cite{MatSho14} in the double zeta case.
We presented the graphs for the double and the triple cases in Section \ref{sec-3}.
We can compute further, and consequently, we may guess that the following conjecture is
plausible.

\begin{conjecture}\label{conj-ITZ}
There are $(r-1)$ ITZs of $\zeta_r(s)$ on the interval $(-2n,-2(n-1))$ for any 
$n\in\mathbb{N}$.   (Therefore, more and more ITZs appear when $r$ increases.)
\end{conjecture}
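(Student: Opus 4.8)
The plan is to prove the $n\ge 2$ part of Conjecture~\ref{conj-ITZ} (as announced in the abstract) by isolating, on each interval $(-2n,-2(n-1))$, a single dominant term in $\zeta_r(s)$ and showing that the remaining terms form a perturbation too small to alter the number of real zeros. Iterating the fundamental identity \eqref{zeta_id}, equivalently using the symmetric-function expansion $\zeta_r(s)=\sum_{\lambda\vdash r}z_\lambda^{-1}(-1)^{r-\ell(\lambda)}\prod_i\zeta(\lambda_i s)$ (the sum over partitions $\lambda$ of $r$, with $\ell(\lambda)$ the number of parts and $z_\lambda$ the usual symmetry factor), I would split off the single-part term $\tfrac{(-1)^{r-1}}{r}\zeta(rs)$ and write $r\zeta_r(s)=(-1)^{r-1}\zeta(rs)+E_r(s)$ with $E_r(s)=\sum_{j=1}^{r-1}(-1)^{j-1}\zeta_{r-j}(s)\zeta(js)$. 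Since $\zeta(w)$ has simple zeros exactly at the negative even integers, $\zeta(rs)$ has precisely the $r-1$ simple zeros $s_k=-2n+2k/r$ $(1\le k\le r-1)$ on $(-2n,-2(n-1))$, at which $rs_k=-2(rn-k)$; the goal is to transfer this count to $\zeta_r(s)$.

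The size comparison rests on the functional equation $\zeta(w)=2^{w}\pi^{w-1}\sin(\pi w/2)\Gamma(1-w)\zeta(1-w)$ together with Stirling's formula \eqref{Stirling}. For $s<0$ these give $|\zeta(js)|\asymp \sqrt{j|s|}\,|\sin(\pi js/2)|\,(j|s|/2\pi e)^{j|s|}$, so for any $\lambda\vdash r$ one finds $\prod_i|\zeta(\lambda_i s)|/|\zeta(rs)|\asymp (P(\lambda)/r^r)^{|s|}$ with $P(\lambda)=\prod_i\lambda_i^{\lambda_i}$, up to the sine and polynomial factors. Because $x\mapsto x\log x$ is strictly convex, $P(\lambda)\le r^r$ with equality only for $\lambda=(r)$, so every non-dominant partition is exponentially smaller than $\zeta(rs)$ once $|s|$ is bounded away from $0$. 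The hypothesis $n\ge 2$ enters precisely here: it guarantees $|s|>2$ throughout the interval, which is what makes the gap $(P(\lambda)/r^r)^{|s|}$ effective. I would package this as a lemma asserting that, away from the $s_k$, one has $|E_r(s)|<|\zeta(rs)|$, with the leading shortfall governed by the partition $(r-1,1)$ and comparable to $\binom{2rn}{2n}^{-1}$.

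Granting this lemma, the count follows by a two-regime argument. Around each $s_k$ I would excise an interval $U_k=(s_k-\delta_k,s_k+\delta_k)$ with $\delta_k$ chosen self-consistently so that $|\zeta(rs)|$ equals a fixed multiple of $\sup_{U_k}|E_r|$ on $\partial U_k$; since $E_r$ is exponentially smaller than the slope $|\zeta'(rs_k)|$ of the dominant term, $\delta_k$ is tiny and the $U_k$ are disjoint. Outside $\bigcup_k U_k$ the lemma gives $\operatorname{sign}\zeta_r(s)=\operatorname{sign}\big(\tfrac{(-1)^{r-1}}{r}\zeta(rs)\big)$, so $\zeta_r$ has no zeros there and changes sign from each gap to the next. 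Inside each $U_k$ I would instead compare derivatives, $\zeta_r'(s)=(-1)^{r-1}\zeta'(rs)+E_r'(s)$; the same magnitude estimates, now with a harmless logarithmic factor from differentiation, give $|E_r'|<|\zeta'(rs)|$ on $U_k$, so $\zeta_r$ is strictly monotone there and has exactly one zero. Summing over $k$ yields exactly $r-1$ real zeros on $(-2n,-2(n-1))$.

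The main obstacle is the uniformity of the error lemma down to the boundary case $n=2$, where $|s|$ can be as small as $2$, and simultaneously for all $r\ge 2$. The delicate point is not any individual partition, each of which is small, but controlling the full sum $\sum_{\lambda\ne(r)}z_\lambda^{-1}(P(\lambda)/r^r)^{|s|}$ uniformly in $r$, since the number of partitions grows subexponentially; here one must exploit that $P(\lambda)/r^r=e^{-rH(\lambda/r)}$ decays rapidly as the parts spread out, by the entropy gap $H>0$, so that only the few partitions near $(r)$ contribute and the sum stays below $1$ rather than being crudely bounded by $p(r)$ times its largest term. A secondary difficulty is bookkeeping the $\sin(\pi js/2)$ factors so as to retain a genuine lower bound on $|\zeta(rs)|$ off the $s_k$ while upper-bounding the $|\zeta(\lambda_i s)|$, and checking that the self-consistent radii satisfy $\delta_k<1/r$ so that the inside and outside regimes tile the interval. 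The case $n=1$ falls outside this scheme, consistent with its omission from the theorem, since on $(-2,0)$ the values $|s|\le 2$ are too small for the exponential dominance to take hold.
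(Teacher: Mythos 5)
Your plan attacks Theorem \ref{Th-Toshiki} (the $n\ge 2$ part of the conjecture) by a genuinely different route from the paper. The paper works complex-analytically: it encloses each expected zero in a small rectangle $R_k(r)$ straddling the real axis, proves $|\zeta_r(s)|<|\zeta(rs)|$ on the rectangle boundaries, and applies Rouch\'e's theorem, so that $\zeta_r$ inherits exactly one zero (real, by conjugate symmetry) from $\zeta(rs)$ in each rectangle. Crucially, that boundary estimate is proved by induction on $r$: the induction hypothesis $|\zeta_{r-j}(s)|<|\zeta((r-j)s)|$ turns the error in \eqref{zeta_id} into $r-1$ two-factor products, each beaten by the factor $1/r$ of Lemma \ref{lemma3}, so the plain triangle inequality suffices and no uniformity-in-$r$ issue ever arises. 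Your real-variable scheme (dominant term plus perturbation, sign changes off the $s_k$, monotonicity near them) rests on the same core mechanism --- the functional equation plus Stirling \eqref{Stirling} making $\zeta(rs)$ exponentially dominant --- and buys elementarity: no contour integration, and a sharper local picture (monotonicity) near each zero. But it pays twice: you must control derivatives, which Rouch\'e never needs, and, because you expand into partitions instead of inducting on $r$, you must carry out the entropy-gap summation that you correctly flag as the main obstacle. That obstacle is largely self-inflicted: feeding your same dominance estimates through the recursion \eqref{zeta_id} by induction on $r$, exactly as the paper does, collapses the partition sum to $r-1$ terms and the uniformity problem disappears.

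There is also one concrete gap: your two regimes do not tile the interval. You excise neighborhoods $U_k$ only around the interior zeros $s_k=-2n+2k/r$, $1\le k\le r-1$, but $\zeta(rs)$ also vanishes at the endpoints $-2n$ and $-2(n-1)$, and there your dominance lemma $|E_r(s)|<|\zeta(rs)|$ cannot hold in the naive magnitude sense: every term of $E_r$ contains a factor $\zeta(js)$, which vanishes at negative even integers, so both sides vanish at the endpoints and the asymptotic ratio estimates (which require a lower bound on $|\sin(\pi rs/2)|$) degenerate nearby. As written, your ``outside'' regime silently includes these endpoint neighborhoods where its hypothesis fails, so extra zeros could hide there and spoil the count ``exactly $r-1$''. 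The fix is available inside your own framework --- run the derivative/monotonicity argument also at $k=0$ and $k=r$, using the trivial zeros $\zeta_r(-2m)=0$ to conclude that the punctured endpoint neighborhoods contain no further zero --- but it has to be said; the paper handles exactly this point with the shrunken rectangle $R_0^{(\varepsilon)}$, the limit $\varepsilon\to 0^+$, and the slightly enlarged rectangles at the end of its proof.
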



In fact, we can prove this conjecture with only one exception:

\begin{theorem}\label{Th-Toshiki}
There are exactly $(r-1)$ ITZs of $\zeta_r(s)$ on the interval $(-2n,-2(n-1))$ for any 
$n \geq 2$.
\end{theorem}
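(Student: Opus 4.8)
The plan is to isolate the dominant term of $\zeta_r(s)$ on the deep negative real line and then compare $\zeta_r$ with it by a Rouch\'e-type count on the interval. Writing $s=-\sigma$ with $\sigma\in(2(n-1),2n)$ and using \eqref{zeta_id2}, I would split
\[
\zeta_r(-\sigma)=\frac{(-1)^{r-1}}{r}\zeta(-r\sigma)+\frac{1}{r}R(\sigma),\qquad
R(\sigma)=\sum_{j=1}^{r-1}(-1)^{j-1}\zeta_{r-j}(-\sigma)\zeta(-j\sigma),
\]
and apply the functional equation $\zeta(-k\sigma)=-2^{-k\sigma}\pi^{-k\sigma-1}\sin(\pi k\sigma/2)\Gamma(1+k\sigma)\zeta(1+k\sigma)$ to the leading term. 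This rewrites it as $c(\sigma)\sin(\pi r\sigma/2)$ with $c(\sigma)=\tfrac{(-1)^{r}}{r}2^{-r\sigma}\pi^{-r\sigma-1}\Gamma(1+r\sigma)\zeta(1+r\sigma)$ of one fixed sign and of size $\asymp\Gamma(1+r\sigma)$, so that
\[
\zeta_r(-\sigma)=c(\sigma)\bigl[\sin(\pi r\sigma/2)+\mathcal E(\sigma)\bigr],\qquad
\mathcal E(\sigma)=\frac{R(\sigma)}{r\,c(\sigma)}.
\]
The $r-1$ zeros to be produced are exactly the interior zeros $\sigma=2m/r$ ($(n-1)r<m<nr$) of $\sin(\pi r\sigma/2)$ on the interval.

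The heart of the matter is to show that $\mathcal E$ is a genuinely small perturbation. Expanding $\zeta_{r-j}$ inductively through the recursion, each summand of $R$ is dominated (up to the bounded sine factors and lower-order pieces) by a product $\Gamma(1+j_1\sigma)\cdots\Gamma(1+j_\nu\sigma)$ with $\sum_i j_i=r$ and $\nu\ge2$, whereas $c(\sigma)$ carries the single factor $\Gamma(1+r\sigma)$. The key estimate is the Gamma-product bound: for $\sum_i j_i=r$ with at least two factors one has, by \eqref{Stirling},
\[
\frac{\prod_i\Gamma(1+j_i\sigma)}{\Gamma(1+r\sigma)}\ll_r \sigma^{O(1)}\Bigl(\max_{1\le j\le r-1}\bigl(\tfrac{j}{r}\bigr)^{j/r}\bigl(\tfrac{r-j}{r}\bigr)^{(r-j)/r}\Bigr)^{r\sigma},
\]
and the base is strictly less than $1$ (since $x^x(1-x)^{1-x}<1$ for $0<x<1$), so the right-hand side decays geometrically in $\sigma$. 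For $\sigma\ge2$, i.e.\ $n\ge2$, this forces $|\mathcal E(\sigma)|\le\varepsilon<1$ uniformly on the interval; for $\sigma\in(0,2)$ the same quantity need not be small, which is precisely why the case $n=1$ must be excluded. I would prove the bound by induction on $r$. I would also record the exact vanishing $\mathcal E(2(n-1))=\mathcal E(2n)=0$, which is immediate because every summand of $R$ contains a factor $\zeta(-2jn)=0$ (resp.\ $\zeta(-2j(n-1))=0$) at the endpoints.

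With these bounds in hand I would count zeros by a deformation argument. Consider $g_t(\sigma)=\sin(\pi r\sigma/2)+t\,\mathcal E(\sigma)$ for $t\in[0,1]$: the number of zeros of $g_t$ in $[2(n-1),2n]$ can change only if some $g_t$ acquires a multiple zero in the open interval or a zero escapes through an endpoint. A multiple zero would force $\sin(\pi r\sigma/2)=-t\mathcal E$ and $\tfrac{\pi r}{2}\cos(\pi r\sigma/2)=-t\mathcal E'$ simultaneously, contradicting $\sin^2+\cos^2=1$ as soon as $\mathcal E(\sigma)^2+\bigl(\tfrac{2}{\pi r}\mathcal E'(\sigma)\bigr)^2<1$; and the endpoints remain simple zeros for all $t$ because there $\mathcal E=0$ while $\cos(\pi r n)=\pm1$. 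Hence the interior zero count is constant in $t$ and equals that of $\sin(\pi r\sigma/2)$, namely $r-1$, yielding the lower and upper bounds simultaneously.

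The main obstacle is the derivative bound required in the last step: one must show $|\mathcal E'(\sigma)|$ is small enough that $\mathcal E^2+(2\mathcal E'/\pi r)^2<1$ throughout the interval. This means differentiating the functional-equation expression for each summand of $R$ and controlling the resulting factors $\log\sigma$, $\psi(1+j\sigma)$, and $\zeta'/\zeta(1+j\sigma)$ against the geometric decay already established for $|\mathcal E|$. Since differentiation costs only polynomial factors in $\sigma$ while the gain in the Gamma-product bound is geometric, the estimate should survive for all $\sigma\ge2$; the delicate point is making the constants explicit uniformly in $r$, and verifying that the threshold $\sigma\ge2$ is exactly where both the value and derivative estimates begin to hold, matching the exclusion of $n=1$.
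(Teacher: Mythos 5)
Your proposal is sound in outline and shares the paper's analytic core, but it counts the zeros by a genuinely different mechanism. Both arguments isolate the single-factor term $\frac{(-1)^{r-1}}{r}\zeta(rs)$ in Newton's recursion \eqref{zeta_id} as the dominant piece for $\Re s\leq -2$, and both establish dominance through the functional equation plus Stirling/Gamma-ratio estimates; your bound on $\prod_i\Gamma(1+j_i\sigma)/\Gamma(1+r\sigma)$ is essentially the reciprocal of the quantity the paper bounds from below in Lemma \ref{lemma2}, and the threshold $\sigma\geq 2$ (hence the exclusion of $n=1$) arises for the same reason in both. The difference is the counting step. The paper works in the complex plane: it surrounds each expected zero $s=-2m/r$ by a small rectangle $R_k(r)$, proves by induction on $r$ that $|\zeta_r(s)|<|\zeta(rs)|$ on every boundary $\partial R_k(r)$ (Lemmas \ref{lemma1}--\ref{lemma3} and the key Proposition of Section \ref{sec5}), and invokes Rouch\'e's theorem; exactness of the count is then automatic, reality of the zero in each rectangle follows from conjugation symmetry, and the endpoint issues are absorbed by slightly enlarging rectangles. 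You stay on the real line, write $\zeta_r(-\sigma)=c(\sigma)\bigl[\sin(\pi r\sigma/2)+\mathcal{E}(\sigma)\bigr]$, and count by deforming $\mathcal{E}$ to $0$; this is legitimate, but it forces you to control not only $|\mathcal{E}|$ but also $|\mathcal{E}'|$ in order to exclude multiple zeros along the homotopy, and that derivative estimate --- which you correctly identify as the main obstacle and leave at the level of a plausibility argument --- is precisely what the Rouch\'e route renders unnecessary, since Rouch\'e needs only modulus bounds on a boundary. I believe your missing estimate is true and provable (differentiation of the functional-equation factors costs only $O(r\log(r\sigma))$ against a gain of size $\binom{r\sigma}{\sigma}^{-1}$ at $\sigma\ge 2$), and your endpoint observation $\mathcal{E}(2n)=\mathcal{E}(2(n-1))=0$ is correct, so the plan can be completed; but be aware that the remaining labor --- explicit constants valid down to $\sigma=2$, which is where the paper invests its effort via the monotonicity of the Gamma ratio $G(x)$, its evaluation $G(2)=\binom{2a+2b}{2a}$, and the $\tanh$ and $\zeta(3)$ bounds --- must still be done in your setting, and then the derivative bound comes on top of it. In short: same decomposition and same analytic inputs, but your real-variable homotopy buys freedom from complex analysis at the price of an extra layer of estimates, whereas the paper's rectangle-plus-Rouch\'e argument gets exactness and reality of the zeros essentially for free once the modulus inequality is in place.
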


We will give the proof of this theorem in the next section.
The method of our proof of Theorem \ref{Th-Toshiki} cannot be applied to the case
$n=1$, and hence this case is still open.

We show the graphs of $r$-fold zeta-functions ($2\leq r\leq 10$) on the interval
$[-2,0]$ (see Figure \ref{Fig_neg}), and furthermore we present the data of ITZs
for the $r$-fold zeta-functions
\(\zeta_r(s)\), $2\leq r\leq 10$, on the same interval
(see Figure \ref{Fig4}).
This data clearly agree with the case $n=1$ of Conjecture \ref{conj-ITZ}.

\begin{figure}[h]
\centering
\includegraphics{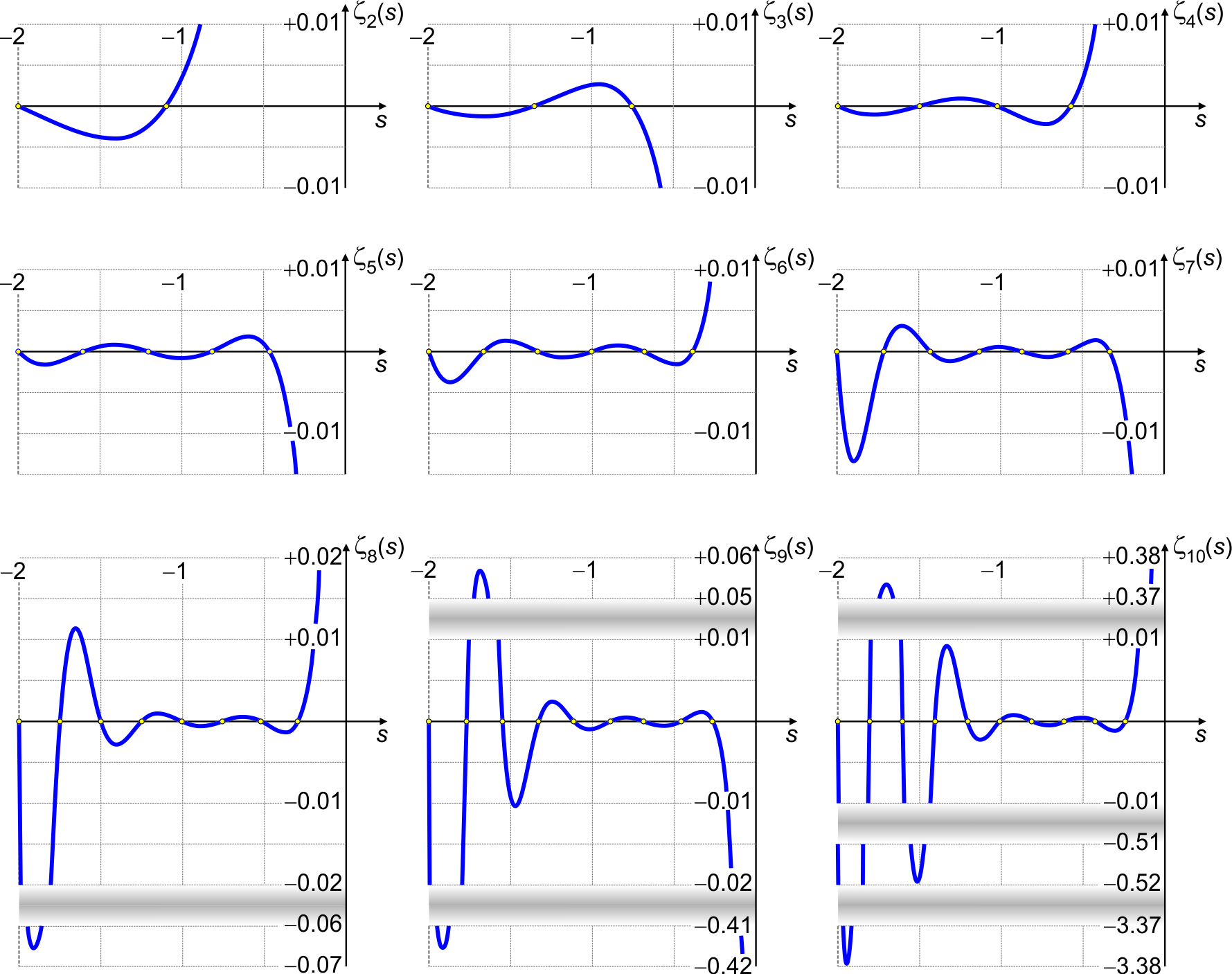}
\caption{The behavior of $r$-fold zeta-functions ($2\leq r\leq 10$) on the interval
$[-2,0]$ }
\label{Fig_neg}
\end{figure}

\begin{figure}[h]
\centering
\includegraphics{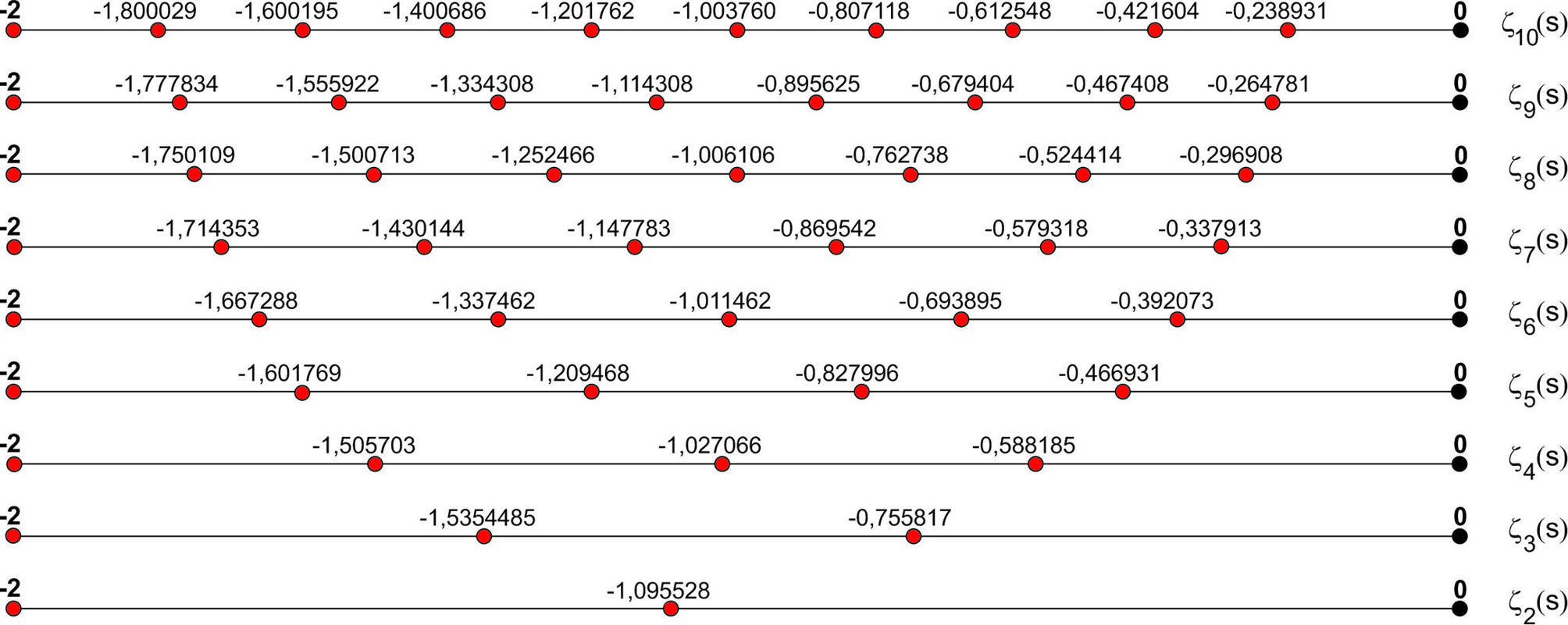}
\caption{Inter-trivial zeros of multiple zeta-functions $\zeta_r(s)$ ($2\leq r\leq 10$)
in the interval $[-2,0]$}
\label{Fig4}
\end{figure}

When $s\to -\infty$, the values of $\zeta_r(s)$ become highly oscillating, and take
larger and larger absolute values.
This situation can be well illustrated in the following theorem.

\begin{theorem}\label{Th-negativeodd}
Let $r\geq 1$.    When $k$ takes odd integer values and $k\to+\infty$, we have

{\rm (i)}
If $r$ is odd, 
\begin{align}\label{Th-oddsymp}
\zeta_r(-k)\sim \frac{1}{r}\zeta(-rk)
\sim (-1)^{(rk+1)/2}\cdot\frac{2k^{1/2}}{\sqrt{2\pi}}\cdot r^{rk-1/2}
\left(\frac{k}{2\pi e}\right)^{rk}.
\end{align}

{\rm (ii)}
\begin{align}\label{Th-2symp}
\zeta_2(-k)=\frac{1}{2}\zeta(-k)^2
\sim \frac{k}{\pi}\left(\frac{k}{2\pi e}\right)^{2k}.
\end{align}

{\rm (iii)}
If $r\geq 4$ is even,
\begin{align}\label{Th-evensymp}
\zeta_r(-k)\sim \frac{1}{r-1}\zeta(-k)\zeta(-(r-1)k)
\sim (-1)^{(r/2)-1}\cdot\frac{2k}{\pi}\cdot(r-1)^{(r-1)k-1/2}
\left(\frac{k}{2\pi e}\right)^{rk}.
\end{align}
\end{theorem}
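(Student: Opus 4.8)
The plan is to prove all three statements simultaneously by induction on $r$, using the reduced form of the Newton identity \eqref{zeta_id} specialized to $s=-k$. Since $k$ is odd, for even $j$ the argument $jk$ is even, so $\zeta(-jk)=0$; only odd $j$ survive, and for odd $j$ the sign factor is $(-1)^{j-1}=+1$. Thus \eqref{zeta_id} collapses to
\begin{align*}
r\zeta_r(-k)=\sum_{\substack{1\le j\le r\\ j\ \mathrm{odd}}}\zeta_{r-j}(-k)\,\zeta(-jk).
\end{align*}
The base cases are $r=1$, where $\zeta_1(-k)=\zeta(-k)$, and $r=2$, where the single surviving term gives $\zeta_2(-k)=\tfrac12\zeta(-k)^2$ exactly, which is already statement (ii). For the inductive step I would substitute the induction hypothesis into each $\zeta_{r-j}(-k)$, turning every term into an explicit product of Riemann zeta values $\zeta(-m_1k)\cdots\zeta(-m_\nu k)$ with odd $m_i$ satisfying $m_1+\cdots+m_\nu=r$.

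To compare these products I would first record the single-variable asymptotic. Combining $\zeta(1-2n)=-B_{2n}/(2n)$ with $B_{2n}=(-1)^{n+1}2(2n)!\,\zeta(2n)/(2\pi)^{2n}$, the limit $\zeta(2n)\to1$, and Stirling's formula \eqref{Stirling}, one obtains, for odd $m\to\infty$,
\begin{align*}
\zeta(-m)\sim(-1)^{(m+1)/2}\sqrt{\frac{2m}{\pi}}\left(\frac{m}{2\pi e}\right)^{m}.
\end{align*}
Taking logarithms, a product $\prod_i\zeta(-m_ik)$ with $\sum_i m_i=r$ has log-size $rk\log k+k\sum_i m_i\log m_i+O(k)$. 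After cancelling the common leading term $rk\log k$, the dominant contribution is the one maximizing $\sum_i m_i\log m_i$ over the admissible configurations, i.e.\ over partitions of $r$ into odd parts. Strict superadditivity of $x\mapsto x\log x$, namely $(a+b)\log(a+b)>a\log a+b\log b$ for $a,b>0$, shows this sum is maximized by concentrating as much mass as possible into a single part.

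This pins down the dominant term in each parity. When $r$ is odd, the one-part configuration $\{r\}$ is admissible and is attained only by $j=r$ (the term $\zeta_0\,\zeta(-rk)=\zeta(-rk)$); every other term splits $r$ into at least two odd parts and is therefore strictly smaller, giving $\zeta_r(-k)\sim\tfrac1r\zeta(-rk)$, which is (i). When $r\ge4$ is even, $rk$ is even so $\{r\}$ is forbidden; the maximal admissible configuration is $\{r-1,1\}$, attained by exactly the two terms $j=1$ and $j=r-1$. These do not cancel: both carry sign $+1$ and the same product $\zeta(-k)\zeta(-(r-1)k)$, and by the induction hypothesis they contribute coefficients $\tfrac{1}{r-1}$ (from $j=1$, using $\zeta_{r-1}(-k)\sim\tfrac1{r-1}\zeta(-(r-1)k)$ since $r-1$ is odd) and $1$ (from $j=r-1$), which add to $\tfrac{r}{r-1}$. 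Hence $r\zeta_r(-k)\sim\tfrac{r}{r-1}\zeta(-k)\zeta(-(r-1)k)$, which is (iii). Inserting the single-variable asymptotic into each of the three reduced forms and simplifying the $(-1)^{(m+1)/2}$ signs together with the Stirling factors then reproduces the explicit right-hand sides of \eqref{Th-oddsymp}, \eqref{Th-2symp}, and \eqref{Th-evensymp}.

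The main obstacle I anticipate is making the domination step fully rigorous. Because the leading factor $rk\log k$ is shared by every term, the competition is decided only at the next order $k\sum_i m_i\log m_i$, so one must verify that the claimed configuration strictly maximizes this quantity among all partitions of $r$ into odd parts, which forces the competing terms down by a factor $e^{-ck}$, and, in the even case, that the two extremal terms reinforce rather than interfere. Some bookkeeping is also required to confirm that the induction hypothesis applies to every $\zeta_{r-j}(-k)$ with $j$ odd (so that $r-j<r$), and that the boundary indices $r-j\in\{0,1\}$ are handled correctly by the conventions $\zeta_0=1$ and $\zeta_1=\zeta$.
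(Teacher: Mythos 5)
Your proposal is correct and follows essentially the same route as the paper's proof: induction on $r$ via the reduced Newton identity (only odd $j$ survive at $s=-k$, all with sign $+1$), the Bernoulli--Stirling asymptotic for $\zeta(-k)$, identification of the dominant terms $j=r$ (odd $r$) and $j=1$, $j=r-1$ reinforcing each other (even $r$), and exponential suppression of all remaining terms. Your superadditivity/convexity argument for maximizing $k\sum_i m_i\log m_i$ over partitions of $r$ into odd parts is the same convexity-of-$x\log x$ idea that the paper packages as Lemma~\ref{Lemma-elem}, i.e.\ $(r-j)^{r-j}j^j<(r-1)^{r-1}$ for $3\leq j\leq r-3$.
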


To prove this theorem, we first prepare:

\begin{lemma}\label{Lemma-elem}
Let $r\geq 6$ and $3\leq j\leq r-3$.   Then 
$(r-j)^{r-j} j^j < (r-1)^{r-1}$.
\end{lemma}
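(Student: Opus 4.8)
The plan is to reduce the lemma to a one-variable inequality by passing to logarithms. I would set
\[
g(x) = x\log x + (r-x)\log(r-x), \qquad x\in[3,r-3],
\]
so that $(r-j)^{r-j}j^j = e^{g(j)}$. Then $g'(x) = \log x - \log(r-x)$ and $g''(x) = \tfrac{1}{x} + \tfrac{1}{r-x} > 0$, so $g$ is strictly convex on $[3,r-3]$. A convex function on a closed interval attains its maximum at an endpoint, and $g$ is symmetric about $x = r/2$, so $g(3) = g(r-3)$. Hence over the integers $3\le j\le r-3$ (a nonempty range precisely because $r\ge 6$) we get the uniform bound
\[
(r-j)^{r-j}j^j \le e^{g(3)} = 3^3\,(r-3)^{r-3} = 27\,(r-3)^{r-3}.
\]

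It therefore suffices to prove $27\,(r-3)^{r-3} < (r-1)^{r-1}$ for $r\ge 6$. I would rewrite the ratio as
\[
\frac{(r-1)^{r-1}}{(r-3)^{r-3}} = (r-1)^2\left(\frac{r-1}{r-3}\right)^{r-3} = (r-1)^2\left(1+\frac{2}{r-3}\right)^{r-3},
\]
and estimate each factor separately. By Bernoulli's inequality, $\bigl(1+\tfrac{2}{r-3}\bigr)^{r-3} \ge 1 + (r-3)\cdot\tfrac{2}{r-3} = 3$, while $(r-1)^2 \ge 25$ for $r\ge 6$. Multiplying gives that the ratio is at least $75 > 27$, which proves the reduced inequality and hence the lemma.

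I do not expect a serious obstacle here: the argument is elementary throughout, and the final Bernoulli estimate carries a comfortable margin ($75$ versus $27$), so no delicate numerical bookkeeping is required. The one point deserving care is the reduction step—justifying that the maximum of the discrete quantity $(r-j)^{r-j}j^j$ over $3\le j\le r-3$ is dominated by the continuous endpoint value $e^{g(3)}$—but this is immediate once strict convexity of $g$ and its symmetry about $r/2$ are recorded. An alternative, if one prefers to avoid calculus entirely, is to argue directly that the integer sequence $a_j = (r-j)^{r-j}j^j$ satisfies $a_{j+1}/a_j = \bigl(\tfrac{j+1}{j}\bigr)^{j+1}\bigl(\tfrac{r-j-1}{r-j}\bigr)^{r-j-1}\cdot\tfrac{j}{r-j-1}$, from which one reads off that $a_j$ is decreasing up to $j = r/2$ and increasing afterward, again localizing the maximum at $j=3$; but the convexity route above seems cleanest.
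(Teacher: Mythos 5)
Your proof is correct and takes essentially the same route as the paper: both pass to logarithms and use convexity (the paper of $x\log x$, you of $g(x)=x\log x+(r-x)\log(r-x)$, which is the same observation) to localize the maximum at the endpoint $j=3$, reducing everything to $27(r-3)^{r-3}<(r-1)^{r-1}$. The only difference is the final elementary step: the paper splits into the cases $r=6$ (checked directly) and $r\geq 7$ (via $3\log 3<2\log(r-1)$), whereas your Bernoulli estimate $(r-1)^2\bigl(1+\tfrac{2}{r-3}\bigr)^{r-3}\geq 25\cdot 3=75>27$ handles all $r\geq 6$ uniformly.
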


\begin{proof}
The claim is equivalent to the inequality
\begin{align}\label{log-style}
(r-j)\log(r-j)+j\log j < (r-1)\log (r-1)
\end{align}
for $3\leq j\leq r-j$.
Since the function $x\log x$ is convex downward for $x>0$, it is sufficient to prove
\eqref{log-style} for $j=3$:
\begin{align}\label{log3-style}
(r-3)\log(r-3)+3\log 3 < (r-1)\log (r-1).
\end{align}
When $r=6$, this is $6\log 3 < 5\log 5$, which is true.    When $r\geq 7$, we have
$3\log 3 < 2\log(r-1)$, and so
$$
(r-3)\log (r-3)+3\log 3 < (r-3)\log(r-1)+2\log (r-1)=(r-1)\log (r-1).
$$
\end{proof}

\begin{proof}[Proof of Theorem \ref{Th-negativeodd}]
First consider the case $r=1$.    Then it is well-known that
$$
\zeta_1(-k)=\zeta(-k)=-\frac{B_{k+1}}{k+1}
$$
for odd $k$,
where $B_{k+1}$ is the $(k+1)$-th Bernoulli number (see \cite[Theorem 12.16]{Apos76}).   
Since the asymptotic formula
$B_{2k}\sim (-1)^{k+1}2(2k)!/(2\pi)^{2k}$, as $k\to\infty$, is 
known (see \cite[Theorem 12.18]{Apos76}),
combining with \eqref{Stirling} we obtain
$$
B_{k+1}\sim (-1)^{(k-1)/2}\frac{2k^{3/2}k^k}{(2\pi)^{1/2}(2\pi e)^k},
$$
and hence
\begin{align}\label{Th-1symp}
\zeta(-k)\sim (-1)^{(k+1)/2}\frac{2k^{1/2}k^k}{(2\pi)^{1/2}(2\pi e)^k}
= (-1)^{(k+1)/2}\frac{2k^{1/2}}{\sqrt{2\pi}}\left(\frac{k}{2\pi e}\right)^k,
\end{align}
which is the case $r=1$ of \eqref{Th-oddsymp}.

Then, from \eqref{2_id} we have
$$
\zeta_2(-k)=\frac{1}{2}\left(\zeta(-k)^2-\zeta(-2k)\right)=\frac{1}{2}\zeta(-k)^2
\sim\frac{k}{\pi}\left(\frac{k}{2\pi e}\right)^{2k},
$$
which is \eqref{Th-2symp}.    Next, using \eqref{3_id} and applying \eqref{Th-1symp}
and \eqref{Th-2symp}, we obtain
\begin{align*}
\zeta_3(-k)&=\frac{1}{3}\left(\zeta_2(-k)\zeta(-k)+\zeta(-3k)\right)\\
&\sim \frac{1}{3}\left\{\frac{k}{\pi}\left(\frac{k}{2\pi e}\right)^{2k}
(-1)^{(k+1)/2}\frac{2k^{1/2}}{\sqrt{2\pi}}\left(\frac{k}{2\pi e}\right)^k
+(-1)^{(3k+1)/2}\frac{2(3k)^{1/2}}{\sqrt{2\pi}}\left(\frac{3k}{2\pi e}\right)^{3k}
\right\}\\
&=\frac{1}{3}(-1)^{(k+1)/2}\frac{2}{\sqrt{2\pi}}\left(\frac{k}{2\pi e}\right)^{3k}
\left\{\frac{k^{3/2}}{\pi}-k^{1/2}3^{3k+1/2}\right\}\\
&\sim (-1)^{(3k+1)/2}\frac{2k^{1/2}}{\sqrt{2\pi}}3^{3k-1/2}
\left(\frac{k}{2\pi e}\right)^{3k},
\end{align*}
which is the case $r=3$ of \eqref{Th-oddsymp}.    This calculation also shows
$\zeta_3(-k)\sim (1/3)\zeta(-3k)$, and hence, from \eqref{4_id}, 
$$
\zeta_4(-k)=\frac{1}{4}\left(\zeta_3(-k)\zeta(-k)+\zeta(-k)\zeta(-3k)\right)
\sim \frac{1}{3}\zeta(-k)\zeta(-3k).
$$
Therefore using \eqref{Th-1symp} we obtain
$$
\zeta_4(-k)\sim -\frac{2k}{\pi}3^{3k-1/2}\left(\frac{k}{2\pi e}\right)^{4k},
$$
which is the case $r=4$ of \eqref{Th-evensymp}.

Now we prove the general case by induction on $r$.    The formula \eqref{zeta_id}
implies
\begin{align}\label{zeta_id_k}
\zeta_r(-k)=\frac{1}{r}\sum_{\stackrel{i\leq j\leq r}{j:{\rm odd}}}
(-1)^{j-1}\zeta_{r-j}(-k)\zeta(-jk).
\end{align}
When $r$ is odd, we separate the last term from the above sum to get
\begin{align}\label{zeta_id_k_odd}
\zeta_r(-k)=\frac{1}{r}\sum_{\stackrel{i\leq j\leq r-1}{j:{\rm odd}}}
(-1)^{j-1}\zeta_{r-j}(-k)\zeta(-jk)+\frac{1}{r}\zeta(-rk).
\end{align}
Note that $r-j$ is even in each term in the above sum.
Therefore, By the induction assumption, each term in the above sum is
\begin{align}\label{orderestimate1}
&\ll_r k(r-j-1)^{(r-j-1)k-1/2}\left(\frac{k}{2\pi e}\right)^{(r-j)k}\cdot (jk)^{1/2}
\left(\frac{jk}{2\pi e}\right)^{jk}\\
&=(r-j-1)^{(r-j-1)k-1/2}k^{3/2} j^{jk+1/2}\left(\frac{k}{2\pi e}\right)^{rk}.\notag
\end{align}
Since
$$
(r-j-1)^{(r-j-1)k-1/2} j^{jk+1/2}\leq r^{(r-j-1)k-1/2} r^{jk+1/2}=r^{(r-1)k},
$$
the order of the right-hand side of \eqref{orderestimate1} with respect to $k$ 
is less than the order of the term $(1/r)\zeta(-rk)$ (which is
$k^{1/2}(rk/2\pi e)^{rk}$).
Therefore from \eqref{zeta_id_k_odd} we have 
$\zeta_r(-k)\sim (1/r)\zeta(-rk)$, and hence \eqref{Th-oddsymp} follows by applying
\eqref{Th-1symp} to $\zeta(-rk)$.

We proceed to the case of even $r$.    In this case the term corresponding to $j=r$
does not appear on the right-hand side of \eqref{zeta_id_k}.    We divide the sum as
\begin{align*}
\zeta_r(-k)=\frac{1}{r}\left(\zeta_{r-1}(-k)\zeta(-k)+\zeta(-k)\zeta(-(r-1)k)\right)
+E_r,
\end{align*}
where $E_r$ consists of the terms corresponding to $3\leq j\leq r-3$, $j$:odd.
Then, since $r-1$ is odd, we use the induction assumption 
$\zeta_{r-1}(-k)\sim (r-1)^{-1}\zeta(-(r-1)k)$ to obtain
\begin{align}\label{zeta_id_k_even}
\zeta_r(-k)\sim \frac{1}{r-1}\zeta(-k)\zeta(-(r-1)k)+E_r.
\end{align}
If the order of $E_r$ is less than the order of $\zeta(-k)\zeta(-(r-1)k)$, then
$\zeta_r(-k)\sim (r-1)^{-1}\zeta(-k)\zeta(-(r-1)k)$, and hence \eqref{Th-evensymp}
follows by applying \eqref{Th-1symp}.
The order of $\zeta(-k)\zeta(-(r-1)k)$ is given by the right-hand side of 
\eqref{Th-evensymp}, while the order of each term $\zeta_{r-j}(-k)\zeta(-jk)$
($3\leq j\leq r-3$, $j$:odd) in the sum $E_r$ is
\begin{align*}
&\ll k^{1/2}(r-j)^{-1/2}\left(\frac{(r-j)k}{2\pi e}\right)^{(r-j)k}
(jk)^{1/2}\left(\frac{jk}{2\pi e}\right)^{jk}\\
&\ll_r k(r-j)^{(r-j)k} j^{jk}\left(\frac{k}{2\pi e}\right)^{rk}.
\end{align*}
Since the order of $(r-j)^{(r-j)k} j^{jk}$ with respect to $k$ is less than the order
of $(r-1)^{(r-1)k}$ by Lemma \ref{Lemma-elem}, we obtain the desired assertion.
\end{proof}

\section{Proof of Theorem \ref{Th-Toshiki}}\label{sec5}

In this last section we provide a proof of Theorem \ref{Th-Toshiki}. 
For positive integers $a, b, k > 0$, we let
\[
	R_k = R_k(a+b) := \left\{s \in \mathbb{C} \bigg| -2 - \frac{2k+1}{a+b} \leq \Re s \leq -2 -\frac{2k-1}{a+b}, -\frac{1}{a+b} \leq \Im s \leq \frac{1}{a+b} \right\},
\]
and
\[
	R_0 = R_0(a+b):= \left\{s \in \mathbb{C} \bigg| -2 - \frac{1}{a+b} \leq \Re s \leq -2, -\frac{1}{a+b} \leq \Im s \leq \frac{1}{a+b} \right\}.
\]
First, we prepare the following three lemmas.

\begin{lemma}\label{lemma1}
	Fix positive integers $a, b $. For any $k \geq 0$ and any $s \in \partial R_k$ on the boundary, we have
	\[
		\left| \frac{\sin \left(\frac{\pi}{2} (a+b) s \right)}{\sin \left(\frac{\pi}{2} as \right) \sin \left(\frac{\pi}{2} bs \right)} \right| > \frac{2}{a+b}.
	\]
\end{lemma}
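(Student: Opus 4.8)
The plan is to set $s=\sigma+it$ and translate the estimate into real-variable inequalities through the elementary identity $|\sin z|^2=\sin^2(\Re z)+\sinh^2(\Im z)$. Since replacing $s$ by $\bar s$ conjugates each of the three sine factors and so preserves the modulus of the quotient, I may assume $t\ge 0$. Writing $N=a+b$, the asserted inequality is equivalent to $N\,|\sin(\tfrac{\pi}{2}Ns)|>2\,|\sin(\tfrac{\pi}{2}as)|\,|\sin(\tfrac{\pi}{2}bs)|$. For the right-hand side I would use the universal bound $|\sin(\tfrac{\pi}{2}as)|\le\cosh(\tfrac{\pi a t}{2})$ (from $\sin^2\le 1$), its analogue for $b$, and a product-to-sum identity, to get
\[
2\,\bigl|\sin(\tfrac{\pi}{2}as)\,\sin(\tfrac{\pi}{2}bs)\bigr|\le \cosh(\tfrac{\pi Nt}{2})+\cosh(\tfrac{\pi(a-b)t}{2}).
\]
(It is worth recording the clean identity $\sin(\tfrac{\pi}{2}Ns)/(\sin(\tfrac{\pi}{2}as)\sin(\tfrac{\pi}{2}bs))=\cot(\tfrac{\pi}{2}as)+\cot(\tfrac{\pi}{2}bs)$, which explains why the quotient blows up near the trivial zero $s=-2$.) The task then becomes to bound $|\sin(\tfrac{\pi}{2}Ns)|$ from below edge by edge and feed it into this estimate.

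On the vertical edges $\Re s=-2-\tfrac{c}{N}$ with $c$ odd — these are both vertical edges of every $R_k$ with $k\ge 1$ and the left edge of $R_0$ — we have $\sin^2(\tfrac{\pi N\sigma}{2})=1$, so $|\sin(\tfrac{\pi}{2}Ns)|=\cosh(\tfrac{\pi Nt}{2})$ exactly. For $t>0$ the claim then reduces to $(N-1)\cosh(\tfrac{\pi Nt}{2})>\cosh(\tfrac{\pi(a-b)t}{2})$, which holds because $|a-b|\le N-2$ and $N\ge 2$ give $\cosh(\tfrac{\pi(a-b)t}{2})<\cosh(\tfrac{\pi Nt}{2})\le (N-1)\cosh(\tfrac{\pi Nt}{2})$. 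On the horizontal edges $\Im s=\pm\tfrac1N$ we have $|\sin(\tfrac{\pi}{2}Ns)|\ge\sinh(\tfrac{\pi}{2})$, and the claim reduces to $N\sinh(\tfrac{\pi}{2})>\cosh(\tfrac{\pi}{2})+\cosh(\tfrac{\pi(a-b)}{2N})$; since the right-hand side is $<2\cosh(\tfrac{\pi}{2})$, this is immediate for $N\ge 3$ (as $3\sinh(\tfrac{\pi}{2})>2\cosh(\tfrac{\pi}{2})$) and is checked directly for $N=2$.

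The delicate part, and the main obstacle, is exactly the locus where the imaginary direction degenerates and the crude bound $\sin^2\le 1$ is too wasteful. This occurs in two places. First, at the real midpoints $t=0$ of the vertical edges: there $|\sin(\tfrac{\pi}{2}Ns)|=1$ and the reduced inequality $N-1\ge 1$ is not strict when $N=2$, so I would instead evaluate the denominator exactly, needing $N>2\,|\sin(\tfrac{\pi ac}{2N})\sin(\tfrac{\pi bc}{2N})|$; this is trivial for $N\ge 3$, and for $N=2$ reduces to $2>2\sin^2(\tfrac{\pi c}{4})=1$. Second, on the right edge $\Re s=-2$ of $R_0$, where the universal denominator bound fails badly near $t=0$: here $\sin^2(\tfrac{\pi a\sigma}{2})=\sin^2(\tfrac{\pi b\sigma}{2})=0$, so the denominator equals $\sinh(\tfrac{\pi at}{2})\sinh(\tfrac{\pi bt}{2})=\tfrac12(\cosh(\tfrac{\pi Nt}{2})-\cosh(\tfrac{\pi(a-b)t}{2}))$ — a difference rather than a sum — and the claim becomes $N\sinh(\tfrac{\pi Nt}{2})>\cosh(\tfrac{\pi Nt}{2})-\cosh(\tfrac{\pi(a-b)t}{2})$. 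Bounding the right-hand side by $\cosh(\tfrac{\pi Nt}{2})-1=2\sinh^2(\tfrac{\pi Nt}{4})$ and the left by $2N\sinh(\tfrac{\pi Nt}{4})\cosh(\tfrac{\pi Nt}{4})$ reduces it to $N>\tanh(\tfrac{\pi Nt}{4})$, which is automatic; the point $s=-2$ itself is a pole of the quotient, where the inequality holds trivially.

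In short, away from the real axis the factor $N$ (equivalently $N-1\ge 1$) dominates comfortably, and the whole difficulty concentrates in handling the degenerate real points by exact trigonometric evaluation — turning the product-to-sum \emph{sum} into a \emph{difference} on the edge $\Re s=-2$, and using the exact sine values at $t=0$ — together with separately clearing the boundary case $a+b=2$.
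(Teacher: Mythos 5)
Your proof is correct and covers every boundary point, and its skeleton coincides with the paper's: both decompose $\partial R_k$ into the horizontal edges, the vertical edges at the odd abscissae $\Re s = -2-\tfrac{c}{a+b}$, and the right edge $\Re s=-2$ of $R_0$ (with the quotient read as $+\infty$ at the pole $s=-2$), and both run on the identity $|\sin(x+iy)|^2=\sin^2x+\sinh^2y$. The estimates inside each case, however, are genuinely different. The paper keeps the denominator as a product and exploits hyperbolic addition formulas: on horizontal edges it writes the bound as $\tanh\left(\tfrac{\pi}{2}\tfrac{a}{a+b}\right)+\tanh\left(\tfrac{\pi}{2}\tfrac{b}{a+b}\right)\geq 2\tanh\left(\tfrac{\pi}{2}\tfrac{1}{a+b}\right)>\tfrac{2}{a+b}$; on vertical edges it retains the $\sin^2$ terms and uses the parity observation that $\sin^2\left(\tfrac{\pi(2k-1)}{2}\tfrac{a}{a+b}\right)$ and $\sin^2\left(\tfrac{\pi(2k-1)}{2}\tfrac{b}{a+b}\right)$ cannot both equal $1$, which yields a strict bound (the ratio even exceeds $1$) uniformly in $t$, with no special case at $t=0$ or at $a+b=2$; on the right edge it uses exactly your parenthetical cotangent identity, in the hyperbolic form $\left|\coth\left(\tfrac{\pi}{2}\tfrac{a}{a+b}t\right)+\coth\left(\tfrac{\pi}{2}\tfrac{b}{a+b}t\right)\right|>2$. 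You instead discard the $\sin^2$ terms wholesale via $|\sin(\tfrac{\pi}{2}as)|\leq\cosh(\tfrac{\pi at}{2})$ and convert products to sums or differences of $\cosh$; this is mechanically simpler but wasteful precisely at real points, which is why you must patch $t=0$ (and the case $a+b=2$) by exact evaluation, using $2\sin^2(\pi c/4)=1$ for odd $c$, and why the right edge needs the $\cosh u-1=2\sinh^2(u/2)$ reduction to $a+b>\tanh(\cdot)$. Both are complete elementary proofs: the paper's addition-formula and parity arguments buy uniformity (no case distinctions whatsoever), while your route trades that for cruder universal bounds plus the degenerate-point analysis you correctly isolated as the crux.
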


\begin{proof} We divide the proof into three cases.

	1. For $s = x  \pm \frac{i}{a+b}$ $(x \leq -2)$, by the fact that $|\sin(x+iy)|^2 = \sin^2 x + \sinh^2 y$, we have
\begin{align}\label{proof_first_case}
		\left| \frac{\sin \left(\frac{\pi}{2} (a+b) s\right)}{\sin \left(\frac{\pi}{2} as \right) \sin \left(\frac{\pi}{2} bs \right)} \right|^2 &= \frac{\sin^2 \left(\frac{\pi}{2}(a+b)x \right) + \sinh^2 \left(\frac{\pi}{2} \right)}{\left(\sin^2 \left(\frac{\pi}{2} ax \right) + \sinh^2 \left(\frac{\pi}{2} \frac{a}{a+b} \right) \right) \left(\sin^2 \left(\frac{\pi}{2} bx \right) + \sinh^2 \left(\frac{\pi}{2} \frac{b}{a+b} \right) \right)}\\
			&\geq \frac{\sinh^2 \left(\frac{\pi}{2} \right)}{\cosh^2 \left(\frac{\pi}{2} \frac{a}{a+b}\right) \cosh^2 \left(\frac{\pi}{2} \frac{b}{a+b}\right)}.\notag
\end{align}
The	addition formula for $\sinh y$ gives
\begin{align*}
&\sinh\left(\frac{\pi}{2}\right)=
\sinh\left(\frac{\pi}{2}\frac{a}{a+b}+\frac{\pi}{2}\frac{b}{a+b}\right)\\
&=\sinh\left(\frac{\pi}{2}\frac{a}{a+b}\right)\cosh\left(\frac{\pi}{2}\frac{b}{a+b}\right)
+\cosh\left(\frac{\pi}{2}\frac{a}{a+b}\right)\sinh\left(\frac{\pi}{2}\frac{b}{a+b}\right).
\end{align*}
Therefore the right-hand side of \eqref{proof_first_case} is equal to
$$
\left(\tanh \left(\frac{\pi}{2} \frac{a}{a+b} \right) + \tanh \left(\frac{\pi}{2} \frac{b}{a+b} \right) \right)^2,
$$
which implies
	\[
		\left| \frac{\sin \left(\frac{\pi}{2} (a+b) s\right)}{\sin \left(\frac{\pi}{2} as \right) \sin \left(\frac{\pi}{2} bs \right)} \right| \geq \tanh \left(\frac{\pi}{2} \frac{a}{a+b} \right) + \tanh \left(\frac{\pi}{2} \frac{b}{a+b} \right) \geq 2 \tanh \left(\frac{\pi}{2} \frac{1}{a+b} \right).
	\]
	Since $\tanh (\pi/2y) > 1/y$ holds for $y \geq 2$, we now obtain
	\[
		\left| \frac{\sin \left(\frac{\pi}{2} (a+b) s\right)}{\sin \left(\frac{\pi}{2} as \right) \sin \left(\frac{\pi}{2} bs \right)} \right| > \frac{2}{a+b}.
	\]
	
	2. For $s = -2 - \frac{2k-1}{a+b} + \frac{it}{a+b}$ $(k > 0, -1 \leq t \leq 1)$, in a similar manner, we get
	\begin{align*}
		\left| \frac{\sin \left(\frac{\pi}{2} (a+b) s\right)}{\sin \left(\frac{\pi}{2} as \right) \sin \left(\frac{\pi}{2} bs \right)} \right|^2 &= \frac{1 + \sinh^2 \left(\frac{\pi}{2} t \right)}{\left(\sin^2 \left(\frac{\pi(2k-1)}{2} \frac{a}{a+b} \right) + \sinh^2 \left(\frac{\pi}{2} \frac{a}{a+b} t \right) \right) \left(\sin^2 \left(\frac{\pi(2k-1)}{2} \frac{b}{a+b} \right) + \sinh^2 \left(\frac{\pi}{2} \frac{b}{a+b} t \right) \right)}\\
			&> \frac{\cosh^2 \left(\frac{\pi}{2} t \right)}{\cosh^2 \left(\frac{\pi}{2} \frac{a}{a+b} t \right) \cosh^2 \left(\frac{\pi}{2} \frac{b}{a+b} t \right)} = \frac{4\cosh^2 \left(\frac{\pi}{2} t \right)}{\left(\cosh \left(\frac{\pi}{2} t\right) + \cosh \left(\frac{\pi}{2} \frac{a-b}{a+b} t \right) \right)^2} \geq 1 \geq \frac{4}{(a+b)^2}.
	\end{align*}
	Here, the first inequality is valid because the equation 
$$\sin^2 \left(\frac{\pi (2k-1)}{2} \frac{a}{a+b} \right) = \sin^2 \left(\frac{\pi (2k-1)}{2} \frac{b}{a+b} \right) = 1$$ 
does not happen.
	
	3. For $s = -2 + \frac{it}{a+b}$ $(-1 \leq t \leq 1)$, 
	again using the addition formula, we have
	\[
		\left| \frac{\sin \left(\frac{\pi}{2} (a+b) s\right)}{\sin \left(\frac{\pi}{2} as \right) \sin \left(\frac{\pi}{2} bs \right)} \right| = \left| \frac{\sinh \left(\frac{\pi}{2} t \right)}{\sinh \left( \frac{\pi}{2} \frac{a}{a+b} t \right)\sinh \left( \frac{\pi}{2} \frac{b}{a+b} t \right)} \right| = \left| \frac{1}{\tanh \left(\frac{\pi}{2} \frac{a}{a+b} t \right)} + \frac{1}{\tanh \left(\frac{\pi}{2} \frac{b}{a+b} t \right)} \right| > 2 \geq \frac{2}{a+b}.
	\]
Note that this argument is valid even when $t=0$; in this case we understand that the
left-hand side is $+\infty$.	
	
The proof of Lemma \ref{lemma1} is thus complete.
\end{proof}

We now replace $R_0$ with a smaller rectangle
\[
	R_0^{(\varepsilon)} = \left\{ s \in \mathbb{C}  \bigg| -2 - \frac{1}{a+b} \leq \Re s \leq -2-\varepsilon, -\frac{1}{a+b} \leq \Im s \leq \frac{1}{a+b} \right\}
\]
for a small enough $\varepsilon > 0$ such that Lemma \ref{lemma1} also holds for $s \in \partial R_0^{(\varepsilon)}$. For simplicity, we refer to $R_0^{(\varepsilon)}$ as $R_0$.

\begin{lemma}\label{lemma2}
	Fix positive integers $a, b $. For any $s \in \bigcup_{k \geq 0} R_k$, we have
	\[
		\left|\frac{\Gamma(1-(a+b)s)}{\Gamma(1-as) \Gamma(1-bs)}\right| \left|\frac{\zeta(1-(a+b)s)}{\zeta(1-as) \zeta(1-bs)}\right| > \frac{1}{2\pi} (a+b)^2.
	\]
\end{lemma}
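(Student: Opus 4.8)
The plan is to estimate the two quotients separately and multiply. Throughout I write $R=a+b$ and $\sigma=-\Re s$; on $\bigcup_{k\ge0}R_k$ we have $\Re s\le -2$ and $|\Im s|\le 1/R$, so $\sigma\ge 2$ and the three arguments $1-Rs,\,1-as,\,1-bs$ all have large real part, namely $\Re(1-Rs)=1+R\sigma\ge5$ and $\Re(1-as),\Re(1-bs)\ge3$, while their imaginary parts are at most $R|\Im s|\le1$ in absolute value.

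First I would dispose of the zeta-quotient. From $2-\zeta(\Re w)\le|\zeta(w)|\le\zeta(\Re w)$ for $\Re w>1$ together with the bounds on the real parts above,
\begin{align*}
\left|\frac{\zeta(1-Rs)}{\zeta(1-as)\zeta(1-bs)}\right|\ge\frac{2-\zeta(5)}{\zeta(3)^2}=:c_1>0 .
\end{align*}
For the Gamma-quotient I would pass to the real axis using the Weierstrass product identity $|\Gamma(x+iy)/\Gamma(x)|=\prod_{n\ge0}\big(1+y^2/(x+n)^2\big)^{-1/2}$, which is $\le1$ and hence bounds the two denominators by $\Gamma(1+a\sigma)$ and $\Gamma(1+b\sigma)$; applied with $x\ge5,\ |y|\le1$ the same product is $\ge c_2:=\exp\big(-\tfrac12\sum_{m\ge5}m^{-2}\big)\approx0.895$, so that $|\Gamma(1-Rs)|\ge c_2\,\Gamma(1+R\sigma)$. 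It then remains to bound from below the real quantity $G(\sigma):=\Gamma(1+R\sigma)/\big(\Gamma(1+a\sigma)\Gamma(1+b\sigma)\big)$ for $\sigma\ge2$.

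The key idea is that $G$ is increasing on $[2,\infty)$, which reduces everything to the single value $\sigma=2$. Indeed $\frac{d}{d\sigma}\log G=R\psi(1+R\sigma)-a\psi(1+a\sigma)-b\psi(1+b\sigma)$, and the elementary digamma bounds $\log x<\psi(1+x)<\log x+\frac{1}{2x}$ show this exceeds $\log\big(R^R/(a^ab^b)\big)-\frac1\sigma$, which is positive because $R^R/(a^ab^b)\ge\binom Ra\ge R\ge2$ forces $\log\big(R^R/(a^ab^b)\big)\ge\log2>\frac12\ge\frac1\sigma$. Consequently $G(\sigma)\ge G(2)=\frac{(2R)!}{(2a)!\,(2b)!}=\binom{2R}{2a}\ge\binom{2R}{2}=R(2R-1)\ge\frac32R^2$, the last inequality holding since $R\ge2$. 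Multiplying the three estimates gives
\begin{align*}
\left|\frac{\Gamma(1-Rs)}{\Gamma(1-as)\Gamma(1-bs)}\right|\left|\frac{\zeta(1-Rs)}{\zeta(1-as)\zeta(1-bs)}\right|\ge c_1c_2\cdot\tfrac32R^2>\frac{R^2}{2\pi},
\end{align*}
since $\tfrac32 c_1c_2\approx0.89>\tfrac1{2\pi}\approx0.16$, which is exactly the assertion.

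The main obstacle is securing a lower bound for the Gamma-quotient that is uniform in $a,b$ and $\sigma$ at once: a crude Stirling estimate overshoots in one regime of $(a,b)$ (for example $a\approx b$, where $R^R/(a^ab^b)$ is exponentially large) and undershoots in another ($a=1$, where it is only of size $R$), so I cannot afford to decouple $R^R/(a^ab^b)$ from the factor $\sqrt{ab}$ that Stirling contributes. Collapsing the problem to $\sigma=2$ by monotonicity — where the quotient is literally the binomial coefficient $\binom{2R}{2a}$ and the uniform bound $\ge R(2R-1)$ is immediate — is what removes this difficulty and keeps the constants explicit; the only point requiring genuine care is the rigorous verification of the monotonicity through the digamma inequalities.
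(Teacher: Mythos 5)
Your proof is correct and follows essentially the same route as the paper: the same splitting into zeta-factor and Gamma-factor, the same Weierstrass product identity reducing everything to the real ratio $G(\sigma)=\Gamma(1+(a+b)\sigma)/\bigl(\Gamma(1+a\sigma)\Gamma(1+b\sigma)\bigr)$ times a uniform correction constant, the same monotonicity of $G$ on $[2,\infty)$, and the same endpoint evaluation $G(2)=\binom{2a+2b}{2a}\ge\binom{2a+2b}{2}$. The only real difference is in how the monotonicity is verified --- the paper uses convexity and monotonicity of $\psi(x+1)$ via a Jensen-type inequality, whereas you use the explicit bounds $\log x<\psi(1+x)<\log x+\frac{1}{2x}$ --- and your slightly different numerical constants (e.g.\ $2-\zeta(5)$ in place of $2-\zeta(3)$) are equally valid.
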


\begin{proof}
	First, the zeta-factor is evaluated as follows. For $x \geq 3$ and $-1 \leq y \leq 1$, 
	\[
		\left| |\zeta(x+iy)| - 1 \right| \leq \left| \sum_{n=2}^\infty n^{-x-iy} \right| \leq \sum_{n=2}^\infty n^{-x} \leq \zeta(3) - 1,
	\]
hence $2-\zeta(3)\leq |\zeta(x+iy)| \leq \zeta(3)$.
Therefore we get
	\[
		\left|\frac{\zeta(1-(a+b)s)}{\zeta(1-as) \zeta(1-bs)}\right| \geq \frac{2-\zeta(3)}{\zeta(3)^2}.
	\]
	
	Secondly, since it is known that
	\[
		|\Gamma(x+iy)| = |\Gamma(x)| \prod_{j=0}^\infty \left(1+\frac{y^2}{(x+j)^2} \right)^{-1/2},
	\]
	for $s = -x + i \frac{t}{a+b}$ $(x \geq 2, -1 \leq t \leq 1)$ we have
	\[
		\left|\frac{\Gamma(1-(a+b)s)}{\Gamma(1-as) \Gamma(1-bs)}\right| \geq G(x)H(x),
	\]
where
$$
G(x)=\frac{\Gamma((a+b)x+1)}{\Gamma(ax+1) \Gamma(bx+1)}, \quad
H(x)=\prod_{j=0}^\infty \left(1 + \frac{1}{((a+b)x+1+j)^2} \right)^{-1/2}.
$$

Write $z := (a+b)x + 1$.   Then $z \geq 5$, and so
\begin{align}\label{proof_5_2}
		\sum_{j=0}^\infty \log \left(1 + \frac{1}{(z+j)^2} \right) < \sum_{j=0}^\infty \frac{1}{(z+j)^2} \leq \sum_{j=0}^\infty \frac{1}{(5+j)^2}=: A, 	
\end{align}
say.   Note that
$$
A= \zeta(2)-\sum_{l=1}^4\frac{1}{l^2}=\frac{\pi^2}{6} - \frac{205}{144}.
$$
From \eqref{proof_5_2} we obtain $H(x)>e^{-A/2}$.

	We next claim that the $G(x)$ is a monotonically increasing function in $x \geq 2$. This follows from the fact that its logarithmic derivative is always non-negative, that is,
	\[
		(a+b) \psi((a+b)x+1) \geq a \psi(ax+1) + b \psi(bx+1),
	\]
where $\psi(x)$ is the digamma function.   Since $\psi(x+1)$ is convex and monotonically increasing, we get
	\[
		\frac{a \psi(ax+1) + b \psi(bx+1)}{a+b} \leq \psi \left(\frac{a^2 + b^2}{a+b} x + 1 \right) \leq \psi ((a+b)x+1)
	\]
(the convexity gives the first inequality, and the monotonicity gives the second
inequality).   Therefore the claim follows.

We evaluate $G(x)$ at $x=2$:
	\[
		G(2)= {2a+2b \choose 2a} \geq {2a+2b \choose 2} = \frac{(2a+2b)(2a+2b-1)}{2} \geq \frac{3}{2\pi} (a+b)^2.
	\]
	Thus we have
	\begin{align*}
		\left|\frac{\Gamma(1-(a+b)s)}{\Gamma(1-as) \Gamma(1-bs)}\right| \left|\frac{\zeta(1-(a+b)s)}{\zeta(1-as) \zeta(1-bs)}\right| &\geq \frac{3}{2\pi}(a+b)^2 \cdot e^{-A/2} \cdot \frac{2-\zeta(3)}{\zeta(3)^2}\\
			& = 1.483 \dots \times \frac{1}{2\pi} (a+b)^2 > \frac{1}{2\pi} (a+b)^2,
	\end{align*}
	which concludes the proof.
\end{proof}

\begin{lemma}\label{lemma3}
	Let $r \geq 2$ be a fixed integer. For any $0 < j < r, k \geq 0$ and any $s \in \partial R_k(r)$ on the boundary, we have
	\[
		\left|\frac{\zeta(rs)}{\zeta((r-j)s) \zeta(js)} \right| > r.
	\]
\end{lemma}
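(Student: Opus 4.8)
The plan is to reduce the estimate to Lemmas \ref{lemma1} and \ref{lemma2} by means of the functional equation of the Riemann zeta-function. I would set $a=j$ and $b=r-j$, so that $a,b$ are positive integers with $a+b=r$; then the rectangles $R_k(r)=R_k(a+b)$ are exactly those appearing in the previous two lemmas, and both lemmas become applicable to the given boundary point $s$. Note that the quantity to be bounded is symmetric in $a$ and $b$, since $\zeta((r-j)s)\zeta(js)=\zeta(as)\zeta(bs)$.

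First I would apply the functional equation $\zeta(w)=2^{w}\pi^{w-1}\sin\left(\tfrac{\pi}{2}w\right)\Gamma(1-w)\zeta(1-w)$ to each of the three factors $\zeta(rs)$, $\zeta(as)$, $\zeta(bs)$ and form the quotient. The key piece of bookkeeping is that the elementary prefactors collapse: the powers of $2$ cancel because $(a+b)s-as-bs=0$, and the powers of $\pi$ combine into a single factor $\pi$ because $[(a+b)s-1]-[as-1]-[bs-1]=1$. This should leave the clean identity
\begin{align*}
\frac{\zeta(rs)}{\zeta((r-j)s)\zeta(js)}
=\pi\cdot\frac{\sin\left(\frac{\pi}{2}(a+b)s\right)}{\sin\left(\frac{\pi}{2}as\right)\sin\left(\frac{\pi}{2}bs\right)}\cdot\frac{\Gamma(1-(a+b)s)}{\Gamma(1-as)\Gamma(1-bs)}\cdot\frac{\zeta(1-(a+b)s)}{\zeta(1-as)\zeta(1-bs)},
\end{align*}
in which the three remaining quotients are precisely the quantities estimated in Lemmas \ref{lemma1} and \ref{lemma2}.

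Taking absolute values, I would invoke Lemma \ref{lemma1} to bound the sine-quotient from below by $2/(a+b)$ on $\partial R_k(r)$, and Lemma \ref{lemma2} to bound the product of the $\Gamma$-quotient and the $\zeta$-quotient from below by $(a+b)^2/(2\pi)$. Multiplying these bounds together gives
\begin{align*}
\left|\frac{\zeta(rs)}{\zeta((r-j)s)\zeta(js)}\right|
>\pi\cdot\frac{2}{a+b}\cdot\frac{(a+b)^2}{2\pi}=a+b=r,
\end{align*}
which is the claim, with the strict inequality inherited from the strict inequality in Lemma \ref{lemma1}.

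Since Lemmas \ref{lemma1} and \ref{lemma2} already carry the entire analytic weight, I do not expect a genuine obstacle in Lemma \ref{lemma3} itself; the proof is essentially the application of the functional equation plus the two cited bounds. The only points requiring a little care are the exact cancellation of the elementary factors down to a single $\pi$, and verifying that a boundary point $s\in\partial R_k(r)$ really lies in the hypotheses of both lemmas. For the latter, since $\Re s\le -2$ on $\partial R_k(r)$, the shifted arguments $1-as$, $1-bs$, $1-(a+b)s$ all have real part at least $3$, so no $\Gamma$-pole or $\zeta$-zero can interfere and Lemma \ref{lemma2} is in force, while Lemma \ref{lemma1} is stated precisely for the boundary $\partial R_k$.
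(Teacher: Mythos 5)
Your proposal is correct and follows essentially the same route as the paper: the paper's proof likewise applies the functional equation to all three zeta factors, collapses the elementary factors to a single $\pi$, and then multiplies the bound $2/(a+b)$ from Lemma \ref{lemma1} by the bound $(a+b)^2/(2\pi)$ from Lemma \ref{lemma2} to obtain the strict lower bound $r$. Your additional remarks on the cancellation bookkeeping and on checking that boundary points satisfy the hypotheses of both lemmas are details the paper leaves implicit, but they introduce nothing different in substance.
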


\begin{proof}
	By the functional equation,
	\[
		\left| \frac{\zeta(rs)}{\zeta((r-j)s) \zeta(js)} \right| = \pi \left|\frac{\sin \left(\frac{\pi}{2} rs \right)}{\sin \left(\frac{\pi}{2} (r-j)s \right) \sin \left(\frac{\pi}{2} js \right)}\right| \left|\frac{\Gamma(1-rs)}{\Gamma(1-(r-j)s) \Gamma(1-js)}\right| \left|\frac{\zeta(1-rs)}{\zeta(1-(r-j)s) \zeta(1-js)}\right|.
	\]
Applying Lemma \ref{lemma1} and \ref{lemma2}, we get the lemma.
\end{proof}

Now we prove the following key proposition.

\begin{prop}
	Let $r \geq 2$ and $k \geq 0$. For any $s \in \partial R_k(r)$, we have $|\zeta_r(s)| < |\zeta(rs)|$.
\end{prop}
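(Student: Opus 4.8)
The plan is to run an induction on $r$, using the Newton identity \eqref{zeta_id} with its top term separated,
\begin{align*}
r\zeta_r(s)=(-1)^{r-1}\zeta(rs)+\sum_{j=1}^{r-1}(-1)^{j-1}\zeta_{r-j}(s)\zeta(js),
\end{align*}
so that by the triangle inequality the Proposition reduces to the estimate $\sum_{j=1}^{r-1}|\zeta_{r-j}(s)\zeta(js)|<(r-1)|\zeta(rs)|$ on $\partial R_k(r)$. The engine is Lemma \ref{lemma3}: for each $1\le j\le r-1$ it gives the two-factor split $|\zeta((r-j)s)\zeta(js)|<\tfrac1r|\zeta(rs)|$, valid precisely on $\partial R_k(r)$. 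Thus, were $\zeta_{r-j}(s)$ a single Riemann value, summing over $j$ would yield $\tfrac{r-1}{r}|\zeta(rs)|$, and together with the top term this gives $|\zeta_r(s)|<\tfrac{2r-1}{r^2}|\zeta(rs)|<|\zeta(rs)|$ because $(r-1)^2>0$ for $r\ge 2$. The generous slack between $\tfrac{2r-1}{r^2}$ and $1$ is what leaves room for the induction. The base case $r=2$ is immediate: \eqref{2_id} gives $2\zeta_2(s)=\zeta(s)^2-\zeta(2s)$, and Lemma \ref{lemma3} (with $j=1$) bounds $|\zeta(s)|^2<\tfrac12|\zeta(2s)|$, whence $|\zeta_2(s)|<\tfrac34|\zeta(2s)|$.

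For the inductive step I would bound each term $|\zeta_{r-j}(s)\zeta(js)|$ against $|\zeta(rs)|$. The subtle point is that one cannot simply insert the desired bound $|\zeta_{r-j}(s)|\le|\zeta((r-j)s)|$: the multiple zeta $\zeta_{r-j}$ is being evaluated on the ``wrong'' rectangle (its natural comparison box is $R_k(r-j)$, not $R_k(r)$), and, worse, $|\zeta_{r-j}(s)|\le|\zeta((r-j)s)|$ can genuinely fail at the finitely many points of $\partial R_k(r)$ where $\zeta((r-j)s)$ vanishes; for instance $\zeta(4s)=\zeta(-10)=0$ at $s=-5/2\in\partial R_2(6)$, while $\zeta_4(-5/2)\ne 0$. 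Hence $\zeta_{r-j}$ must never be isolated. Instead I would expand $m\zeta_m(s)=\sum_{i=1}^m(-1)^{i-1}\zeta_{m-i}(s)\zeta(is)$ with $m=r-j$, multiply through by $\zeta(js)$, and bound the resulting products $|\zeta_{m-i}(s)\zeta(is)\zeta(js)|$ as indivisible units, applying Lemma \ref{lemma3} to the genuine two-factor splits whose arguments sum to $r$ and feeding the remaining lower-weight products to the induction hypothesis.

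The main obstacle is exactly this multi-factor bookkeeping. Lemma \ref{lemma3} only splits $\zeta(rs)$ into two pieces whose arguments sum to $r$, and only on $R_k(r)$, so converting it into control of the products $\zeta_{r-j}(s)\zeta(js)$ requires a carefully arranged induction whose accumulated constants must be shown to keep the total strictly below $(r-1)|\zeta(rs)|$. An alternative, and perhaps cleaner, route avoids the recursion altogether: expand $\zeta_r(s)$ in closed form as a polynomial in the Riemann values $\zeta(s),\zeta(2s),\dots,\zeta(rs)$ (as in \eqref{3_riemann}, \eqref{4_riemann}) and prove a several-factor refinement of Lemma \ref{lemma3}, bounding $\prod_i|\zeta(\mu_i s)|$ against $|\zeta(rs)|$ for every partition $\mu\vdash r$ with at least two parts. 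Via the functional equation this reduces to three ingredients on $\partial R_k(r)$: the product of sine-quotients stays bounded (a several-factor version of Lemma \ref{lemma1}); the Gamma-ratio $\Gamma(1-rs)/\prod_i\Gamma(1-\mu_i s)$ is a large multinomial coefficient (extending the binomial estimate $G(2)=\binom{2a+2b}{2a}$ used in Lemma \ref{lemma2}); and the factors $\zeta(1-\mu_i s)$ are all close to $1$ since $\Re(1-\mu_i s)\ge 3$. Either way, the slack $\tfrac{2r-1}{r^2}<1$ guarantees the estimate closes once the lower-order contributions are shown to be uniformly smaller than $|\zeta(rs)|$.
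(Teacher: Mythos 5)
Your plan is, at bottom, the same induction the paper runs: the base case $r=2$ via \eqref{2_id} and Lemma \ref{lemma3}, and the arithmetic $\tfrac{1}{r}+\tfrac{r-1}{r^2}=\tfrac{2r-1}{r^2}<1$ in the inductive step are exactly the paper's. The difference is that the step you declare inadmissible --- inserting $|\zeta_{r-j}(s)|<|\zeta((r-j)s)|$ for $s\in\partial R_k(r)$ from the induction hypothesis --- is precisely what the paper's proof does: it assumes the proposition ``for less than $r$'' and then writes
$\frac{1}{r}\sum_{j=1}^{r}|\zeta_{r-j}(s)||\zeta(js)| < \frac{1}{r}|\zeta(rs)|+\frac{1}{r}\sum_{j=1}^{r-1}|\zeta((r-j)s)||\zeta(js)|$
on $\partial R_k(r)$. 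Your objection to that step is mathematically sound: the induction hypothesis controls $\zeta_{r-j}$ only on the boundaries $\partial R_{k'}(r-j)$, which are different sets from $\partial R_k(r)$ (different heights $1/(r-j)$ versus $1/r$, different abscissae), and no lemma in the paper transfers it; worse, the inserted inequality can actually fail on $\partial R_k(r)$, e.g.\ at $s=-5/2\in\partial R_2(6)$ one has $\zeta(4s)=\zeta(-10)=0$, so $|\zeta_4(s)|<|\zeta(4s)|$ is impossible there no matter what $\zeta_4(-5/2)$ is. So what you have identified is not an alternative route but a genuine defect in the paper's own induction step; the paper's argument is only literally valid at points where no factor $\zeta((r-j)s)$ vanishes, and even there it silently uses the hypothesis on rectangles where it was never proved.

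On the other hand, your proposal does not close the hole either. Both of your repairs --- expanding $\zeta_{r-j}$ recursively and bounding products such as $|\zeta_{m-i}(s)\zeta(is)\zeta(js)|$ as indivisible units, or expanding $\zeta_r$ fully into partitions and bounding $\prod_i|\zeta(\mu_i s)|$ against $|\zeta(rs)|$ --- hinge on estimates that Lemmas \ref{lemma1}--\ref{lemma3} do not supply: Lemma \ref{lemma3} only splits $\zeta(rs)$ into two factors whose arguments sum to exactly $r$, whereas your expansions need control of two-factor (and several-factor) products whose arguments sum to $m<r$, still on $\partial R_k(r)$, where the sine, Gamma and zeta factors carry the mismatched ratios $\mu_i/r$ rather than $\mu_i/(a+b)$. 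You correctly call this ``the main obstacle,'' and your sketch (functional equation, multinomial Gamma ratio generalizing $G(2)=\binom{2a+2b}{2a}$, factors $\zeta(1-\mu_i s)$ near $1$) is the plausible way to get it, but none of it is proved, and tracking the accumulated constants so that the total stays strictly below $|\zeta(rs)|$ is exactly where such an argument can break down. As it stands, your write-up is a sound diagnosis plus an unexecuted plan: to make it a proof, you must state and prove the sub-sum/several-factor analogue of Lemmas \ref{lemma1}--\ref{lemma3} on $\partial R_k(r)$ and then run the product-carrying induction with explicit constants.
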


\begin{proof}
	We prove it by induction on $r$. Note that $\zeta(rs)$ is never equal to $0$ on the boundary $\partial R_k$. For $r = 2$, by Lemma \ref{lemma3} (with $r=2$) 
and \eqref{zeta_id}, we have
	\[
		|\zeta_2(s)| \leq \frac{1}{2} \bigg(|\zeta(s)|^2 + |\zeta(2s)| \bigg) \leq \frac{1}{2} \left(\frac{1}{2} |\zeta(2s)| + |\zeta(2s)| \right) < |\zeta(2s)|.
	\]
Now suppose that the proposition holds for less than $r$. Then by the induction
assumption and lemma \ref{lemma3} we have
	\begin{align*}
		|\zeta_r(s)| &\leq \frac{1}{r} \sum_{j=1}^r |\zeta_{r-j}(s)| |\zeta(js)| < \frac{1}{r} |\zeta(rs)| + \frac{1}{r} \sum_{j=1}^{r-1} |\zeta((r-j)s)| |\zeta(js)|\\
			&\leq \frac{1}{r} |\zeta(rs)| + \frac{r-1}{r^2} |\zeta(rs)| < |\zeta(rs)|,
	\end{align*}
	which finishes the proof.
\end{proof}

We finally provide a proof of Theorem \ref{Th-Toshiki}.   By (\ref{zeta_id}) and the above proposition, for $s \in \partial R_k(r)$,
\begin{align}\label{Rouche}
	\left| \sum_{j=1}^{r-1} (-1)^{j-1} \zeta_{r-j}(s) \zeta(js) \right| &\leq \sum_{j=1}^{r-1} |\zeta_{r-j}(s)| |\zeta(js)| < \sum_{j=1}^{r-1} |\zeta((r-j)s)| |\zeta(js)| \nonumber\\
		&< \frac{r-1}{r} |\zeta(rs)| < |\zeta(rs)|.
\end{align}
Therefore, by Rouch\'{e}'s theorem, $\zeta(rs)$ and $\zeta_r(s)$ have the same number of zeros in the interior of each $R_k(r)$.
Since $\zeta(rs)$ has exactly one zero in the interior of each $R_k(r)$ ($k>0$), 
$\zeta_r(s)$ has the unique zero in the interior of $R_k$ for $k > 0$. Since $\zeta_r(s)$ is symmetric with respect to the real-axis, the zero is real. By taking $\varepsilon \to 0^+$, we also see that there is no zero in the interior of $R_0(r)$.

By replacing $R_k(r)$ $(k>0)$ with a slightly larger rectangle such that (\ref{Rouche}) also holds, we can check that there is no zero of $\zeta_r(s)$ on the end points of $R_k(r) \cap \mathbb{R}$. Thus we complete the proof of the theorem.


%
%

%
%


\

\end{document}